\documentclass[titlepage,12pt]{article}\usepackage[top=2.5cm,left=2.5cm,right=2.5cm,bottom=2.5cm]{geometry}
\usepackage{amsmath}
\usepackage{amsthm}
\usepackage{amssymb}
\usepackage{amscd}
\usepackage{amsfonts}
\usepackage{amsbsy}
\usepackage{graphicx}
\usepackage[dvips]{psfrag}
\usepackage{setspace}
\usepackage{indentfirst}
\usepackage{rotating,graphics,psfrag,epsfig}
\usepackage{float}
\usepackage{hyperref}
\usepackage{booktabs}
\usepackage{xcolor}
\usepackage{mathrsfs}
\usepackage{overpic}
\usepackage{doi}
\usepackage{enumitem}  
\usepackage{tikz}

\makeatletter
\hypersetup{
		urlbordercolor=0 0 1,
		colorlinks=true,       	
    linkcolor=blue,
    citecolor=blue,
    filecolor=blue,
		urlcolor=blue,
		bookmarksdepth=4
	}

\newtheorem{teo}{Theorem}
\newtheorem{corol}{Corollary}
\newtheorem{obs}{Remark}
\newtheorem{propo}{Proposition}
\newtheorem{lem}{Lemma}
\newtheorem{defi}{Definition}

\newtheorem{main}{Theorem} 

\begin{document}

\onehalfspacing

\begin{center}
{\Large{\bf On the cyclicity of persistent hyperbolic polycycles}}
\end{center}

\bigskip

\begin{center}
{\large Lucas Queiroz Arakaki$^\ast$, Paulo Santana}\\
{\em UNICAMP, Campinas, SP, Brazil.} \\
{\em IBILCE-UNESP, S. J. Rio Preto, SP, Brazil.} \\
e-mail: larakaki@unicamp.br, paulo.santana@unesp.br
\end{center}

\bigskip

\begin{center}
{\bf Abstract}
\end{center}
    In this work we consider families of smooth vector fields having a persistent polycycle with $n$ hyperbolic saddles. We derive the asymptotic expansion of the return map associated to the polycycle, determining explicitly its leading terms. As a consequence, explicit conditions on the leading terms allow us to determine the cyclicity of such polycycles. We then apply our results to study the cyclicity of a polycycle of a model with applications in Game Theory.

\bigskip

\noindent {\small {\bf Key-words}: Limit cycle, cyclicity, polycycle, asymptotic expansion}

\noindent {\small {\bf 2020 Mathematics Subject Classification:} 34C07, 37C27, 34C23, 37C29}

\noindent \makebox [40mm]{\hrulefill}

\noindent {\footnotesize $^\ast$Corresponding author}.


\section{Introduction}

As an extensive effort to prove the existential part of Hilbert's sixteenth problem, several authors worked in proving the finite cyclicity of the limit periodic sets inside polynomial vector fields, since the finite cyclicity of the limit periodic sets implies that the number of limit cycles is also finite \cite{RoussarieNote}. A limit periodic set inside a polynomial vector field is one of the following: a singular point, a periodic orbit or a graphic. 

The cyclicity of graphics was extensively studied in the literature (see, for instance \cite{Dumortier_1994, Rousseau98, DeMaesschalck11}). For hyperbolic polycycles, i. e. graphics whose corners are hyperbolic saddles and with a well defined return map on one of its sides, it is essential to understand the behavior and properties of the \emph{Dulac map} which is the transition map in the neighborhood of a hyperbolic saddle. In this regard, the works of A. Mourtada \cite{Mourtada1, Mourtada2, Mourtada3, Mourtada4} have substantially developed the understanding of the Dulac map by obtaining a normal form for the Dulac map, namely
    \[D(s;\mu)=s^{\lambda(\mu)}(A(\mu)+R(s;\mu)),\]
where $r(\mu)$ is the \emph{hyperbolicity ratio} of the hyperbolic saddle and $R(s;\mu)$ is a well-behaved remainder. The Mourtada's normal form was further studied to obtain some results on the stability of a generic polycycle \cite{GasManMan02} and an upper bound on the cyclicity \cite{Panazzolo}. 

Recently Marín and Villadelprat~\cite{MarVilDulacLocal, MarVilDulacGeneral, MarVilDulacCoef} proved several results on the Dulac map, which improved uppon Mourtada's normal form. More precisely, they obtained an asymptotic development of the Dulac map and proved that the remainder $R(s;\mu)$ belongs to a class of finitely flat functions. Using their asymptotic development, several advancements in the study of the cyclicity of hyperbolic polycycles have been made (see \cite{MarVilKolmogorov,SantanaPoly}). 

When dealing with perturbations of hyperbolic polycycles, in the context of bifurcation of limit cycles, the generic behavior is the breaking of one of its saddle connections (see, for instance \cite{SantanaPoly,GasManVil05}). In the non-generic scenario where all saddle connections remain unbroken throughout the perturbation, we say that the polycycle is \emph{persistent}. This type of polycycle was studied in \cite{MarVilKolmogorov, QMV}. In this regard, in \cite{MarVilKolmogorov}, the authors studied the cyclicity of the persistent polycycle with three corners that arise in Kolmogorov systems. Their approach was to study the return map associated to the polycycle and obtaining explicit expressions for its leading terms which define three functions that played the same role for the cyclicity of the polycycle as the Lyapunov quantities' role for the cyclicity of a focus.  

In the present paper, we will consider the cyclicity of persistent polycycles. Our goal is to generalize the results of \cite{MarVilKolmogorov} to a more general class of persistent polycycles. In this direction, we obtain the explicit expressions for the leading terms of the return map under some assumptions which then allow us to state some conditions on these leading terms so that the cyclicity of the polycycle is determined. 

\section{Statement of the main results}

We now provide the necessary definitions for a precise statement of our main results.

\begin{defi}[Polycycle]
    Let $X$ be a two-dimensional vector field. A \emph{graphic} $\Gamma$ for $X$ is a compact non-empty invariant subset which is a continuous image of $\mathbb{S}^1$ and consists of a finite number of (not necessarily distinct) isolated singular points $\{p_1,\dots,p_n,p_{n+1}=p_1\}$ and compatibly oriented separatrices $\{\gamma_1,\dots,\gamma_n\}$ connecting them (meaning that $\gamma_i$ has $\{p_i\}$ as the $\alpha$-limit set and $\{p_{i+1}\}$ as the $\omega$-limit set). A graphic for which all its singular points are hyperbolic saddles is said to be \emph{hyperbolic}. A \emph{polycycle} is a graphic with a well-defined first return map $\mathscr{R}$ on one of its sides, see Figure~\ref{Fig1}.
\end{defi}

\begin{figure}[ht]
	\begin{center}
		\begin{minipage}{8cm}
			\begin{center} 
			 	\begin{overpic}[width=6cm]{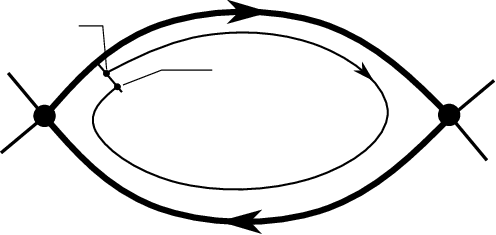} 
				    \put(95,22){$p_1$}
			     	\put(-1,23){$p_2$}
			     	\put(76,40){$L_1$}
			      	\put(15,5){$L_2$}
                        \put(11,41){$s$}
                        \put(43,31){$\mathscr{R}(s)$}
			 	\end{overpic}
				
		  		$(a)$
			\end{center}
		\end{minipage}
		\begin{minipage}{8cm}
			\begin{center} 
			 	\begin{overpic}[width=6cm]{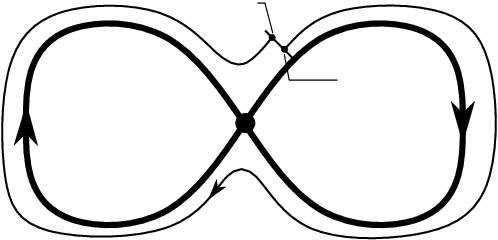} 
			     	\put(54,22.5){$p_1=p_2$}
		      	  \put(74,6){$L_1$}
			     	\put(15,36){$L_2$}
                        \put(69,30){$s$}
                        \put(36,46){$\mathscr{R}(s)$}
		      	\end{overpic}
				
		  		$(b)$
		  	\end{center}
		\end{minipage}
	\end{center}
	\caption{Illustration of $\Gamma$, with $(a)$ distinct and $(b)$ non-distinct hyperbolic saddles.}\label{Fig1}
\end{figure}
	
\begin{defi}[Persistent polycycle]
    Let $\{X_\mu\}_{\mu\in\Lambda}$ be a smooth (i.e. of class $C^\infty$) family of planar smooth vector fields such that $\Gamma$ is a hyperbolic polycycle of $X_{\mu_0}$. We say that $\Gamma$ is a \emph{persistent polycycle} when all of its separatrix connections remain unbroken inside the family $\{X_\mu\}_{\mu\in\Lambda}$.
\end{defi}

\begin{defi}[Independent functions]\label{Def1}
    Let $\Lambda$ be a topological space and consider a set of functions $f_i:\Lambda\to\mathbb{R}$, $i\in\{1,\dots,m\}$. For each $k\in\{1,\dots,m\}$ we denote by
      \[V(f_1,\dots,f_k)=\{\mu\in\Lambda:f_i(\mu)=0,\,i\in\{1,\dots,k\}\}\]
    the variety defined by $f_1,\dots,f_k$. Given $\mu_0\in V(f_1,\dots,f_m)$, we say that $f_1,\dots,f_m$ \emph{are independent at} $\mu_0$, when the following holds:
\begin{itemize}
    \item If $\mu_0\in V(f_1)$, then every neighborhood of $\mu_0$ contains two points $\mu_1,\mu_2$ such that $f_1(\mu_1)f_1(\mu_2)<0$;
    \item For every $k\in\{2,\dots,m\}$ and every neighborhood $U$ of $\mu_0$, there are two points $\mu_1,\mu_2\in U\cap V(f_1,\dots,f_{k-1})$ such that $f_k(\mu_1)f_k(\mu_2)<0$.
\end{itemize}
\end{defi}

Observe that if $\Lambda\subset\mathbb{R}^N$, $N\geqslant m$, is an open set, the functions $f_1,\dots,f_m$ are of class $C^1$ and the gradients $\nabla f_1(\mu_0),\dots,\nabla f_m(\mu_0)$ are linearly independent vectors of $\mathbb{R}^N$, then there is a neighborhood $U\subset\Lambda$ of $\mu_0$ whose restriction of $f_1,\dots,f_m$ to $U$ are independent at $\mu_0$.

In the next definition $\rm{dist}_H$ stands for the Hausdorff distance between compact sets of $\mathbb{R}^2$.
	
\begin{defi}[Cyclicity]
    Let $\{X_\mu\}_{\mu\in\Lambda}$ be a smooth family of planar smooth vector fields on and suppose that $\Gamma$ is a polycycle for $X_{\mu_0}$. We say that $\Gamma$ has finite cyclicity in the family $\{X_\mu\}_{\mu\in\Lambda}$ if there exist $\kappa\in\mathbb{N}$, $\varepsilon>0$ and $\delta>0$ such that any $X_{\mu}$ with $\vert\mu-\mu_0\vert<\delta$ has at most $\kappa$ limit cycles $\gamma_i$ with $\rm{dist}_H(\Gamma,\gamma_i)<\varepsilon$ for $i\in\{1,\dots,\kappa\}$. The minimum of such $\kappa$ when $\delta$ and $\varepsilon$ go to zero is called \emph{cyclicity} of $\Gamma$ in  $\{X_\mu\}_{\mu\in\Lambda}$ at $\mu=\mu_0$ and denoted by $\rm{Cycl}(\Gamma,\mu_0)$.
\end{defi}

Consider a smooth family $\{X_\mu\}_{\mu\in\Lambda}$ of planar smooth vector fields having a persistent polycycle $\Gamma$ with $m$ hyperbolic saddles $p_1,\dots,p_n$ (to simplify the notation we have dropped the dependence on $\mu$ of the saddles). Let $\lambda_i^s(\mu)<0<\lambda_i^u(\mu)$ be the associated eigenvalues of $p_i$. The \emph{hyperbolicity ratio} of $p_i$ is the positive real number given by
\begin{equation}\label{4}
    \lambda_i(\mu):=\frac{|\lambda_i^s(\mu)|}{\lambda_i^u(\mu)}.
\end{equation}
The product of the hyperbolicity ratios 
\begin{equation}\label{19}
    r(\mu)=\prod_{i=1}^{n}\lambda_i(\mu),
\end{equation}
is called \emph{graphic number} of $\Gamma$. Note that the case $n=1$ corresponds to a saddle loop.

\v Cerkas~\cite{Cherkas} proved that if $r(\mu_0)\neq1$, then $\Gamma$ has a well defined stability. More precisely if $r(\mu_0)>1$, then $\Gamma$ is stable (i.e. it attracts the orbits in the region where the first return map is defined). Similarly if $r(\mu_0)<1$, then $\Gamma$ is unstable. Since $r(\Gamma)$ depends continuously on smooth perturbations, it follows that if $\Gamma$ is persistent and $r(\mu_0)\neq1$, then $\Gamma$ has no change of stability for small perturbations. According with the terminology introduced by Sotomayor~\cite[Section 2.2]{Soto}, if $r(\mu_0)\neq1$ then we say that $\Gamma$ is a \emph{simple} polycycle. 

As anticipated in the Introduction, in recent years Marín and Villadelprat~\cite{MarVilDulacLocal, MarVilDulacGeneral, MarVilDulacCoef} proved several results on the Dulac map of hyperbolic saddles. For simplicity we postpone their precise statements to Section~\ref{Sec:Preliminary}. In what follows we state only a simple version of their results and definitions, sufficient for the statement of our first main result.

\begin{defi}[Well-behaved remainder]
    Consider an open set $U\subset\mathbb{R}^N$ and a smooth function $\psi\colon(0,\varepsilon)\times U\to\mathbb{R}$, with $\varepsilon>0$ small. Given $\ell\in\mathbb{R}$ and $\mu_0\in U$, we write $\psi\in\mathcal{F}^\infty_\ell(\mu_0)$ if for each $\nu=(\nu_0,\nu_1,\dots,\nu_n)\in\mathbb{Z}_{\geqslant0}^{N+1}$ there are a neighborhood $V\subset U$ of $\mu_0$, $C>0$ and $s_0>0$ such that
     \[\left\vert\dfrac{\partial^{|\nu|}\psi}{\partial s^{\nu_0}\partial\mu_1^{\nu_1}\cdots\partial\mu_N^{\nu_N}}(s;\mu)\right\vert\leqslant Cs^{\ell-\nu_0}\;\text{for all } s\in(0,s_0)\text{ and } \mu\in V,\]
     where $|\nu|=\nu_0+\dots+\nu_N$ and $\mu=(\mu_1,\dots,\mu_N)$.
\end{defi}

From~\cite{MarVilDulacLocal,MarVilDulacGeneral,MarVilDulacCoef} we have that the Dulac map of the hyperbolic saddle $p_i$ can be written as
\begin{equation}\label{16}
    D_i(s;\mu)=s^{\lambda_i}\bigl(\Delta_{00}^i+\mathcal{F}^\infty_\ell(\mu_0)\bigr),
\end{equation}
for any given $\ell\in(0,\min\{\lambda_i^0,1\})$, where $\lambda_i=\lambda_i(\mu)$ is the hyperbolicity ratio~\eqref{4}, $\lambda_i^0=\lambda_i(\mu_0)$, and $\Delta_{00}^i=\Delta_{00}^i(\mu)$ is a strictly positive smooth function defined in a neighborhood of $\mu_0$ (we dropped the $\mu$-dependence at the right-hand side of \eqref{16} for simplicity). For the explicit expression of $\Delta_{00}^i$, see Appendix~\ref{App:Dulac}. Given $j$, $k\in\{0,\dots,n\}$, $j\leqslant k$, we define

\begin{equation}\label{eq:formulasAlambda}
A_{j,k}=\prod_{i=j}^{k}(\Delta_{00}^{i})^{\Lambda_{i,k}}, \quad \Lambda_{i,k}=\prod_{j=i+1}^{k}\lambda_j, \quad \Lambda_{kk}=1, \quad \Lambda_{i,k}^0=\Lambda_{i,k}(\mu_0).
\end{equation}
    
In our first main result we provide an explicit expression for the first return map of a persistent polycycle and use this expression to study its cyclicity. We recall that $r(\mu)$ denotes the graphic number~\eqref{19} of $\Gamma$.

\begin{main}\label{Teo:Return0Cycl012}
Let $\{X_\mu\}_{\mu\in\Lambda}$ be a smooth family of planar analytic vector fields having a persistent polycycle $\Gamma$ with hyperbolic saddles $p_1,\dots,p_n$. Then the first return map associated to $\Gamma$ is given by
\begin{equation}\label{eq:ReturnmapL0}
    \mathscr{R}(s;\mu)=s^{r(\mu)}\bigl(A_{1,n}+\mathcal{F}_\ell^\infty(\mu_0)\bigr),
\end{equation}
for any given $\ell\in\bigl(0,\min\bigl\{\Lambda_{i,n}\colon i\in\{0,\dots,n\}\bigr\}\bigr)$. Moreover, the following holds:
\begin{itemize}
    \item [(a)] ${\rm Cycl}(\Gamma,\mu_0)=0$, if $r(\mu_0)\neq 1$;
    \item [(b)] ${\rm Cycl}(\Gamma,\mu_0)\geqslant 1$, if $r(\mu_0)=1$, $r(\mu)-1$ changes signs at $\mu_0$ and $\mathscr{R}(\cdot;\mu_0)\not\equiv Id$;
    \item [(c)] ${\rm Cycl}(\Gamma,\mu_0)\leqslant 1$, if $A_{1,n}(\mu_0)\neq 1$;
    \item [(d)] ${\rm Cycl}(\Gamma,\mu_0)\geqslant 2$, if  $r(\mu_0)=A_{1,n}(\mu_0)=1$, $r-1,\;A_{1,n}-1$ are independent at $\mu_0$ and $\mathscr{R}(\cdot;\mu_0)\not\equiv Id$.  
\end{itemize}
\end{main}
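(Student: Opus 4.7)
The plan for the return map formula~\eqref{eq:ReturnmapL0} is to decompose $\mathscr{R}$ as the composition $T_n\circ D_n\circ T_{n-1}\circ D_{n-1}\circ\cdots\circ T_1\circ D_1$, where $D_i$ is the Dulac map at $p_i$ with expansion~\eqref{16} and $T_i$ is the smooth flow-box transition from the exit section of $p_i$ to the entry section of $p_{i+1}$. Persistence of $\Gamma$ forces each $T_i$ to fix $0$ for every $\mu$ near $\mu_0$, so a smooth reparametrization of the cross-sections reduces every $T_i$ to the identity, its positive linear part being absorbed into the adjacent $\Delta_{00}^{i+1}$. Formula~\eqref{eq:ReturnmapL0} then follows by induction on $i$: composing the partial return $s^{\Lambda_{0,i}}(A_{1,i}+\mathcal{F}_\ell^\infty)$ with the next Dulac map $s^{\lambda_{i+1}}(\Delta_{00}^{i+1}+\mathcal{F}_\ell^\infty)$ produces $s^{\Lambda_{0,i+1}}(A_{1,i}^{\lambda_{i+1}}\Delta_{00}^{i+1}+\mathcal{F}_\ell^\infty)$, and by~\eqref{eq:formulasAlambda} one has $A_{1,i+1}=A_{1,i}^{\lambda_{i+1}}\Delta_{00}^{i+1}$. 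The main technical ingredient is the closure of $\mathcal{F}_\ell^\infty(\mu_0)$ under sums, products with smooth functions, smooth composition and real-exponent powers bounded away from $0$; these properties come from the Mar\'in--Villadelprat results recalled in Section~\ref{Sec:Preliminary} and determine the admissible range of $\ell$.

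Writing the displacement as $\Delta(s;\mu)=\mathscr{R}(s;\mu)-s=s\bigl[s^{r(\mu)-1}(A_{1,n}(\mu)+R(s;\mu))-1\bigr]$ with $R\in\mathcal{F}_\ell^\infty(\mu_0)$, I would prove (a) and (c) by direct bracket analysis. For (a), if $r(\mu_0)\neq 1$ the bracket stays bounded away from $0$ for small $s$ and $\mu$ near $\mu_0$ (it tends to $-1$ when $r(\mu)>1$ and to $+\infty$ when $r(\mu)<1$), hence $\Delta$ has no positive zero and the cyclicity is $0$. For (c), if $A_{1,n}(\mu_0)\neq 1$ split in two sub-cases: when $r(\mu)=1$ the bracket tends to $A_{1,n}(\mu)-1\neq 0$ as $s\to 0^+$, so it vanishes at most once; when $r(\mu)\neq 1$ the derivative $(r-1)s^{r-2}(A+R)+s^{r-1}R'$ has constant sign for $s$ small (the first term dominates since $|sR'|\leqslant Cs^\ell\to 0$), so the bracket is strictly monotone in $s$ and again vanishes at most once. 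In either sub-case $\Delta$ admits at most one positive zero near $0$, so at most one limit cycle accumulates on $\Gamma$.

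For the lower bounds, pick $s^*>0$ small with $\Delta(s^*;\mu_0)\neq 0$, which exists since $\mathscr{R}(\cdot;\mu_0)\not\equiv\mathrm{Id}$; without loss of generality assume $\Delta(s^*;\mu_0)<0$, the opposite sign being symmetric. For (b), since $r-1$ changes sign at $\mu_0$, choose $\mu$ arbitrarily close to $\mu_0$ with $r(\mu)<1$: then $\Delta(s;\mu)>0$ for $s$ small (bracket $\to+\infty$) while $\Delta(s^*;\mu)<0$ by continuity, so the intermediate value theorem produces a zero of $\Delta(\cdot;\mu)$, i.e.\ a limit cycle near $\Gamma$. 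For (d), I would perform a two-stage perturbation exploiting the independence of $r-1$ and $A_{1,n}-1$. Step~1: pick $\mu_1\in V(r-1)$ close to $\mu_0$ with $A_{1,n}(\mu_1)-1>0$ chosen small enough that $\Delta(s^*;\mu_1)<0$; since $\Delta(s;\mu_1)\sim s(A_{1,n}(\mu_1)-1)>0$ near $s=0$, IVT yields a sign-changing zero $s_0\in(0,s^*)$. Step~2: perturb to $\mu_2$ close to $\mu_1$ with $r(\mu_2)>1$; the zero $s_0$ persists at some $s_0'$ with its $+\to-$ sign change preserved by continuity, so $\Delta(s;\mu_2)>0$ just to the left of $s_0'$, while $\Delta(s;\mu_2)<0$ for very small $s$ because the bracket now tends to $-1$. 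A second application of IVT on $(0,s_0')$ produces a second zero $s_1'$, giving two limit cycles near $\Gamma$. The hard part will be (d): controlling the magnitudes of the two perturbations so that the zero from Step~1 is a genuine sign-change zero of $\Delta$ that survives Step~2, while the interval $(0,s_0')$ remains wide enough for the new small-$s$ asymptotics $\Delta\sim-s$ to take effect before reaching $s_0'$ and force the second sign change.
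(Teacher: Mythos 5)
Your derivation of the expansion \eqref{eq:ReturnmapL0} and your arguments for items (a), (b) and (d) are essentially the paper's: the formula comes from inducting on the composition of two Dulac-type maps (Lemma~\ref{Lemma2} and Corollary~\ref{Coro0}), item (a) is the \v Cerkas-type stability statement, and items (b) and (d) are exactly the alternating-sign constructions used in Section~\ref{Sec:Proofs} (for (b) you should add, as the paper does, that analyticity is what makes the zero produced by the intermediate value theorem isolated, hence a limit cycle).

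The genuine gap is in item (c), in the sub-case $r(\mu)\neq 1$. You claim that $\partial_s\bigl[s^{r-1}(A_{1,n}+R)-1\bigr]=(r-1)s^{r-2}(A_{1,n}+R)+s^{r-1}R'$ has constant sign because ``the first term dominates since $|sR'|\leqslant Cs^{\ell}\to 0$''. This domination is a statement about $s\to 0$ for \emph{fixed} $\mu$, but the definition of cyclicity requires a bound on the number of zeros in a \emph{fixed} interval $(0,\varepsilon)$ that is uniform over all $\mu$ in a fixed neighborhood of $\mu_0$. When $r(\mu_0)=1$ (the only case where (c) is not already covered by (a)), the factor $r(\mu)-1$ can be arbitrarily small while the remainder bound $Cs^{\ell}$ is fixed; at scales $s\sim|r(\mu)-1|^{1/\ell}$ the two terms are of the same order, so monotonicity of the bracket on $(0,\varepsilon)$ is not guaranteed and ``at most one zero'' does not follow for those parameters. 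This joint $(s,\mu)\to(0^+,\mu_0)$ uniformity is precisely what the paper's argument is built to supply: it divides by the \'Ecalle--Roussarie compensator $\omega(s;r-1)$ to form $Z_1$ as in \eqref{eq:proof:1.2}, differentiates, and shows that $s^{r}\omega(s;r-1)^2\partial_sZ_1(s;\mu)\to A_{1,n}(\mu_0)-1\neq 0$ as $(s,\mu)\to(0^+,\mu_0)$, so that Rolle's theorem gives the uniform bound. Your two-case split (on $r(\mu)=1$ versus $r(\mu)\neq1$) cannot be made uniform across the boundary between the cases without some device of this kind, so this step of your proposal needs to be replaced rather than just tightened.
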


We observe that the expression~\eqref{eq:ReturnmapL0} of the first return map is similar to the expressions~\eqref{16} of the Dulac maps. Hence we say that it is of \emph{Dulac-type}. 

In order to obtain conditions for a higher cyclicity, it is necessary to obtain a more refined expression for the first return map. This in turn implies in the necessity to study more refined expressions of the Dulac maps. To this end, we briefly observe that from~\cite{MarVilDulacLocal,MarVilDulacGeneral,MarVilDulacCoef} it follows that the Dulac map can be written as 
    \[D_i(s;\mu)=\left\{\begin{array}{ll}
        \displaystyle s^{\lambda_i}\bigl(\Delta_{00}^i+\Delta_{10}^is+\mathcal{F}^\infty_{\ell_1}(\mu_0)\bigr) & \text{if } \lambda_i^0>1, \vspace{0.2cm} \\
        \displaystyle s^{\lambda_i}\bigl(\Delta_{00}^i+\Delta_{01}^is^{\lambda_i}+\mathcal{F}^\infty_{\ell_2}(\mu_0)\bigr) & \text{if } \lambda_i^0<1,
    \end{array}\right.\]
for any given $\ell_1\in(1,\min\{\lambda_i^0,2\})$ and $\ell_2\in(\lambda_i^0,\min\{2\lambda_i^0,1\})$. The functions $\Delta_{10}^i$ and $\Delta_{01}^i$ may have some poles and thus may not be well-defined everywhere. Nevertheless, the reader shall see at Section~\ref{Sec:Preliminary} that such a poles will not be a problem in this paper.

In~\cite{MarVilDulacCoef} the authors provided explicit formulas for $\Delta_{10}$ and $\Delta_{01}$. Such formulas depend on some other functions $S_1^i$ and $S_2^i$ satisfying the following relationships:
    \[\Delta_{10}^i=\lambda_i\Delta_{00}^iS_1^i, \quad \Delta_{01}^i=-(\Delta_{00}^i)^2S_2^i.\]
More details are postponed to Section~\ref{Sec:Preliminary}. 

In our second main result we use the refined expression of the Dulac maps to obtain a refined expression for the first return map, which in turn allow us to obtain conditions for higher cyclicities.

\begin{main}\label{Teo:Return1-+Cycl23}
    Let $\{X_\mu\}_{\mu\in\Lambda}$ be a smooth family of planar analytic vector fields having a persistent polycycle $\Gamma$ with hyperbolic saddles $p_1,\dots,p_m,p_{m+1},\dots,p_n$. Let $\mu_0\in\Lambda$ be such that $\lambda_i(\mu_0)<1$ for $i\in\{1,\dots,m\}$ and $\lambda_i(\mu_0)>1$ for $i\in\{m+1,\dots,n\}$. Then the first return map of $\Gamma$ is given by
    \begin{equation}\label{eq:Return1-+}.
        \mathscr{R}(s;\mu)=s^{r(\mu)}\bigl(A_{1,n}+\mathcal{A}s^{\Lambda_{0,m}}+\mathcal{F}^\infty_\ell(\mu_0)\bigr),
    \end{equation}
    for any given $\ell\in(\Lambda_{0,m}^0,\min\{r(\mu_0),2\Lambda_{0,m}^0,1\})$,
    where 
    \begin{equation}\label{22}
        \mathcal{A}=\Lambda_{m,n}A_{1,m}A_{1,n}(S_1^{m+1}-S_2^m).
    \end{equation}
    Moreover, the following holds:
\begin{itemize}
    \item [(a)] ${\rm Cycl}(\Gamma,\mu_0)\leqslant 2$, if $\mathcal{A}(\mu_0)\neq 0$;
    \item [(b)] ${\rm Cycl}(\Gamma,\mu_0)\geqslant 3$ if $r(\mu_0)=A_{1,n}(\mu_0)=1$, $\mathcal{A}(\mu_0)=0$, $r-1,\;A_{1,n}-1,\;\mathcal{A}$ are independent at $\mu_0$ and $\mathscr{R}(\cdot;\mu_0)\not\equiv Id$.
\end{itemize}
\end{main}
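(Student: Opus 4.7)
The proof proceeds in two parts: deriving the refined asymptotic expansion \eqref{eq:Return1-+}, then deducing the cyclicity bounds.

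\textbf{Expansion.} The plan is to compose the refined Dulac maps inductively, setting $T_0=\mathrm{id}$ and $T_j=D_j\circ T_{j-1}$ for $j=1,\dots,n$, with the smooth transitions between consecutive saddles absorbed by persistence as in the proof of Theorem A. At each step I isolate three pieces: the leading term, the dominant sub-leading correction, and a remainder in $\mathcal{F}_\ell^\infty(\mu_0)$. The analysis splits into three regimes. For $j\leq m$, an induction shows
$T_j(s;\mu)=s^{\Lambda_{0,j}}\bigl(A_{1,j}-A_{1,j}^{2}\,S_2^{j}\,s^{\Lambda_{0,j}}+\mathcal{F}_\ell^\infty(\mu_0)\bigr)$,
because each new $\Delta_{01}^{j}$-correction has exponent $\Lambda_{0,j}$, which (as $\lambda_j<1$) is smaller than $\Lambda_{0,j-1}$ and thus dominates the inherited correction, absorbing the latter into the remainder. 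The pivotal step is $j=m+1$: the inherited correction propagates through $T_m^{\lambda_{m+1}}$ picking up a factor $\lambda_{m+1}$, while the new $\Delta_{10}^{m+1}$-correction, after expanding $T_m^{\lambda_{m+1}+1}$, also falls at the exponent $\Lambda_{0,m}$; using $\Delta_{10}^{i}=\lambda_i\Delta_{00}^{i}S_1^{i}$ and $\Delta_{01}^{i}=-(\Delta_{00}^{i})^{2}S_2^{i}$, the two add to give coefficient $\lambda_{m+1}A_{1,m}(S_1^{m+1}-S_2^{m})$. For $m+1<j\leq n$, any new $\Delta_{10}^{j}$-correction has exponent $\Lambda_{0,j-1}>\Lambda_{0,m}$ and is absorbed, while the inherited $s^{\Lambda_{0,m}}$-term only picks up an extra factor $\lambda_j$. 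Taking $j=n$ produces the claimed formula with $\mathcal{A}=\Lambda_{m,n}A_{1,m}A_{1,n}(S_1^{m+1}-S_2^{m})$. The main obstacle is a careful calculus of $\mathcal{F}_\ell^\infty(\mu_0)$ under multiplication, summation, and raising to the non-integer power $\lambda_j(\mu)$, so that sub-leading contributions of order $\geq s^\ell$ can be consistently absorbed; these closure properties are taken from the Marín--Villadelprat framework recalled in Section~\ref{Sec:Preliminary}.

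\textbf{Upper bound (a).} Define $\mathscr{D}(s;\mu)=\mathscr{R}(s;\mu)-s$ and, for $s>0$, pass to $h(s;\mu)=\log\bigl(\mathscr{R}(s;\mu)/s\bigr)=(r(\mu)-1)\log s+\log\bigl(A_{1,n}+\mathcal{A}s^{\Lambda_{0,m}}+\mathcal{F}_\ell^\infty(\mu_0)\bigr)$, whose positive zeros coincide with those of $\mathscr{D}$. Two applications of Rolle reduce bounding the zeros of $h$ to showing that $(sh'(s;\mu))'$ has no zero for small $s>0$ and $\mu$ near $\mu_0$. A direct differentiation yields $(sh')'(s;\mu)=\dfrac{\mathcal{A}(\mu)\Lambda_{0,m}^{2}}{A_{1,n}(\mu)}\,s^{\Lambda_{0,m}-1}+\mathcal{F}_{\ell-1}^\infty(\mu_0)$; since $\mathcal{A}(\mu_0)\neq 0$ and $\Lambda_{0,m}(\mu_0)<1$, the explicit term dominates near $s=0$ uniformly in $\mu$ close to $\mu_0$, yielding $\mathrm{Cycl}(\Gamma,\mu_0)\leq 2$.

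\textbf{Lower bound (b).} Under the hypotheses of (b), the three functions $r-1,\;A_{1,n}-1,\;\mathcal{A}$ vanish at $\mu_0$ and are independent there. Following the scheme of \cite{MarVilKolmogorov}, I would produce three limit cycles by a three-step perturbation: first perturb within $V(r-1,\,A_{1,n}-1)$ to make $\mathcal{A}$ take both signs, bifurcating one hyperbolic limit cycle from $\Gamma$ (using $\mathscr{R}(\cdot;\mu_0)\not\equiv\mathrm{Id}$); then perturb within $V(r-1)$ to flip the sign of $A_{1,n}-1$, producing a second limit cycle while preserving the first via the implicit function theorem; finally lift the constraint $r=1$ to bifurcate a third limit cycle from the change of sign of $r-1$. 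This gives $\mathrm{Cycl}(\Gamma,\mu_0)\geq 3$.
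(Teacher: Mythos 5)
Your overall strategy coincides with the paper's: the expansion is obtained by composing the refined Dulac expansions inside the finitely flat calculus, the upper bound by a two--step derivation--division plus Rolle, and the lower bound by successive perturbations exploiting the independence of $r-1$, $A_{1,n}-1$, $\mathcal{A}$. Two differences are worth recording. First, the paper composes in two blocks (Corollary~\ref{Coro2} for $D_m\circ\dots\circ D_1$, Corollary~\ref{Coro1} for $D_n\circ\dots\circ D_{m+1}$) and glues them with the mixed-case Lemma~\ref{Lemma6}, whereas you go one saddle at a time. Both work, but there is a bookkeeping slip in your induction: when the $s^{\Lambda_{0,m}}$-coefficient propagates past $p_j$ with $j>m+1$ it picks up the factor $\lambda_j A_{1,j-1}^{\lambda_j-1}\Delta_{00}^j=\lambda_j A_{1,j}/A_{1,j-1}$, not merely $\lambda_j$; as literally written your recursion outputs $\Lambda_{m,n}A_{1,m}(S_1^{m+1}-S_2^m)$, which is inconsistent with the (correct) final formula $\mathcal{A}=\Lambda_{m,n}A_{1,m}A_{1,n}(S_1^{m+1}-S_2^m)$ that you state. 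Second, your upper bound works with $h=\log(\mathscr{R}/s)$ and the operator $f\mapsto(sf')'$, which annihilates both $(r-1)\log s$ and the constant term; this avoids the \'Ecalle--Roussarie compensator that the paper's chain $Z_1,\Theta_1,Z_2$ carries along, and it does not even require first reducing to $r(\mu_0)=A_{1,n}(\mu_0)=1$. The computation $(sh')'=\frac{\Lambda_{0,m}^2\mathcal{A}}{A_{1,n}}s^{\Lambda_{0,m}-1}+\mathcal{F}^\infty_{\ell-1}(\mu_0)$ is correct (the $s^{2\Lambda_{0,m}-1}$ cross terms are absorbed because $\ell<2\Lambda_{0,m}^0$), and dominance follows from $\ell>\Lambda_{0,m}^0$; this is a legitimate, slightly cleaner variant of the paper's argument for (a).

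The one step I would not accept as written is in (b): you assert that the first bifurcated limit cycle is \emph{hyperbolic} and is preserved at the later stages ``via the implicit function theorem''. A zero of $\mathscr{D}=\mathscr{R}-\mathrm{Id}$ produced by the intermediate value theorem need not be simple, so neither hyperbolicity nor the implicit function theorem is available. The mechanism that actually works --- and the one the paper uses --- is sign permanence at fixed sections: fix $s_1$ with $\mathscr{D}(s_1;\mu_0)>0$ (possible since $\mathscr{R}(\cdot;\mu_0)\not\equiv Id$); choose $\mu_1\in V(r-1,A_{1,n}-1)$ with $\mathcal{A}(\mu_1)<0$ to force $\mathscr{D}(s_2;\mu_1)<0$ at some smaller $s_2$; then $\mu_2\in V(r-1)$ near $\mu_1$ with $A_{1,n}(\mu_2)>1$ to force $\mathscr{D}(s_3;\mu_2)>0$; then $\mu_3$ near $\mu_2$ with $r(\mu_3)>1$ to force $\mathscr{D}(s_4;\mu_3)<0$. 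Since the signs at the previously fixed points $s_1,s_2,s_3$ survive each sufficiently small perturbation by continuity, the four alternating signs yield three zeros, which are isolated by analyticity. With that replacement, and the corrected propagation factor, your argument matches the paper's proof.
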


Under the hypothesis of Theorem~\ref{Teo:Return1-+Cycl23}, we observe from~\eqref{22} that to calculate the non-leading term of the first return map, it is only necessary to know the non-leading terms of the Dulac maps of indexes $m$ and $m+1$.

The paper is organized as follows. In Section \ref{Sec:Preliminary} we present the fundamental concepts that will be required to the development of the paper, namely: The finitely flat functions and their properties, and the Dulac map of a hyperbolic saddle. In Section \ref{Sec:Returnmap} we prove the technical results on the composition and inverse of Dulac maps that allowed us to obtain the coefficients in the asymptotic expansion of the return map $\mathscr{R}(s;\mu)$. In Section~\ref{Sec:displacement1} we recall the notion of displacement map, also used in the literature to study the cyclicity of persistent polycycles, and we study its coefficients. The proofs of our main results are presented in Section \ref{Sec:Proofs}. In Section~\ref{Sec:displacement2} we state and prove a similar version or our main results for the displacement map and observe that its coefficients are equivalent with the coefficients of the first return map. We conclude the paper in Section \ref{Sec:Application}, presenting an application of our results in the context of Game Theory.

\section{Preliminary results}\label{Sec:Preliminary}

\subsection{Finitely flat functions}\label{Sec:Flat}

We introduce the notion of finitely flat functions that play a substantial role when dealing with the return map of a polycycle.

\begin{defi}
    Consider $K\in\mathbb{Z}_{\geqslant 0}\cup\{\infty\}$ and an open set $U\subset\mathbb{R}^{N}$. We say that a function $\psi(s;\mu)$ belongs to class $\mathscr C^K_{s>0}(U)$ if there exists an open neighborhood $\Omega$ of $\{0\}\times U$ in $\mathbb{R}^{N+1}$ such that $(s;\mu)\mapsto \psi(s,\mu)$ is $\mathscr C^K$ on $\Omega\cap\left\{(0,+\infty)\times U\right\}$.
\end{defi}

\begin{defi}[Finitely flat functions]\label{defiFlat}
	Consider $K\in\mathbb{Z}_{\geqslant 0}\cup\{\infty\}$ and an open set $U\subset\mathbb{R}^{N}$. Given $L\in\mathbb{R}$ and $\mu_0\in U$, we say that $\psi(s;\mu)\in \mathscr C^K_{s>0}(U)$ is $(L,K)$-flat with respect to $s$ at $\mu_0$, and we write $\psi\in\mathcal{F}^{K}_{L}(\mu_0)$, if for each $\nu=(\nu_0,\dots,\nu_{N})\in\mathbb{Z}^{N+1}_{\geqslant 0}$ with $|\nu|\leqslant K$, there exist a neighborhood $V$ of $\mu_0$ and $C$, $s_0>0$ such that
    \[\vert\partial_\nu\psi(s;\mu)\vert:=\left\vert\dfrac{\partial^{|\nu|}\psi}{\partial s^{\nu_0}\partial\mu_1^{\nu_1}\cdots\partial\mu_N^{\nu_N}}(s;\mu)\right\vert\leqslant Cs^{L-\nu_0}\;\text{for all } s\in(0,s_0)\text{ and } \mu\in V.\]
	If $W$ is a (not necessarily open) subset of $U$, then $\mathcal{F}^{\infty}_{L}(W)=\bigcap\limits_{\mu_0\in W}\mathcal{F}^{\infty}_{L}(\mu_0)$. 
\end{defi}

The usefulness of the finitely flat functions is presented in the next result.

\begin{lem}[{\cite[Lemma A.3]{MarVilDulacLocal}}]\label{LemaPropFlat}
	Let $U$ and $U'$ be open sets of $\mathbb{R}^N$ and $\mathbb{R}^{N'}$ respectively and consider $W\subset U$ and $W'\subset U'$. Then, the following holds:
	\begin{itemize}
		\item[(a)] $\mathcal{F}^{K}_{L}(W)\subset\mathcal{F}^{K}_{L}(\hat{W})$ for any $\hat{W}\subset W$;
		\item[(b)]$\mathcal{F}^{K}_{L}(W)\subset\mathcal{F}^{K}_{L}(W\times W')$;
		\item[(c)] $\mathscr C^K(U)\subset\mathcal{F}^{K}_{0}(W)$;
		\item[(d)] If $K\geqslant K'$ and $L\geqslant L'$ then $\mathcal{F}^{K}_{L}(W)\subset \mathcal{F}^{K'}_{L'}(W)$;
		\item[(e)] $\mathcal{F}^{K}_{L}(W)$ is closed under addition;		\item[(f)] If $f\in\mathcal{F}^{K}_{L}(W)$ and $\nu\in\mathbb{Z}^{N+1}_{\geqslant 0}$ with $|\nu|\leqslant K$ then $\partial_{\nu}f\in\mathcal{F}^{K-|\nu|}_{L-\nu_0}(W)$;
		\item[(g)] $\mathcal{F}^{K}_{L}(W)\cdot\mathcal{F}^{K'}_{L'}(W)\subset \mathcal{F}^{K}_{L+L'}(W)$;
		\item [(h)] Assume that $\phi:U'\to U$ is a $\mathscr C^K$ function with $\phi(W')\subset W$ and let us take $g\in \mathcal{F}^{K}_{L'}(W')$ with $L'>0$ and verifying $g(s;\eta)>0$ for all $\eta\in W'$ and $s>0$ small enough. Consider also any $f\in\mathcal{F}^{K}_{L}(W)$. Then $h(s;\eta):=f(g(s;\eta);\phi(\eta))$ is a well-defined function that belongs to $\mathcal{F}^{K}_{LL'}(W')$.
	\end{itemize}
\end{lem}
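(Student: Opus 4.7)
The plan is to verify each of the eight items directly from the pointwise definition. Since $\mathcal{F}^{K}_{L}(W)=\bigcap_{\mu_0\in W}\mathcal{F}^{K}_{L}(\mu_0)$, and membership in $\mathcal{F}^{K}_{L}(\mu_0)$ means that for every multi-index $\nu$ with $|\nu|\leqslant K$ there exist a neighborhood $V$ of $\mu_0$ and constants $C,s_0>0$ with $|\partial_\nu\psi(s;\mu)|\leqslant Cs^{L-\nu_0}$ on $V\times(0,s_0)$, items (a)--(g) amount to routine manipulation of these inequalities, while (h) requires the multivariate Faà di Bruno chain rule.

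For the first block: (a) is monotonicity of intersections under inclusion $\hat W\subset W$; (b) uses that a function depending only on $\mu$ has vanishing derivatives in the extra $\mu'$-directions, so the same bound carries over to product neighborhoods $V\times V'\subset W\times W'$; (c) follows because any $f\in\mathscr C^K(U)$ has locally bounded $\mu$-derivatives and vanishing $s$-derivatives, which gives the estimate with $L=0$; (d) uses $s^{L-\nu_0}\leqslant s^{L'-\nu_0}$ on $(0,1)$ when $L\geqslant L'$ together with the inclusion of admissible multi-index ranges when $K'\leqslant K$; (e) is the triangle inequality with new constant $C_1+C_2$ and common neighborhood $V_1\cap V_2$; (f) reads the bound for $\psi$ at order $|\nu+\mu|\leqslant K$ as a bound for $\partial_\nu\psi$ at order $|\mu|\leqslant K-|\nu|$ with flatness exponent $L-\nu_0$.

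For (g), after reducing to $|\nu|\leqslant\min(K,K')$ by (d) if necessary, apply the multivariate Leibniz rule
\[\partial_\nu(fg)=\sum_{\alpha\leqslant\nu}\binom{\nu}{\alpha}\,\partial_\alpha f\,\partial_{\nu-\alpha}g,\]
bound each summand by $C_1 s^{L-\alpha_0}\cdot C_2 s^{L'-(\nu_0-\alpha_0)}=C_1C_2\,s^{(L+L')-\nu_0}$ on a common product of neighborhoods, and absorb the binomial coefficients into a single constant.

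The principal obstacle is (h). The approach is to apply the multivariate Faà di Bruno formula to $h(s;\eta)=f(g(s;\eta);\phi(\eta))$: for each $|\nu|\leqslant K$, $\partial_\nu h$ expands as a finite sum of terms of the form
\[(\partial_y^k\partial_\mu^\gamma f)\bigl(g(s;\eta);\phi(\eta)\bigr)\,\prod_i \partial_{\alpha_i}g(s;\eta)\,\prod_j\partial_{\beta_j}\phi(\eta),\]
where $k$ counts the $g$-factors and the multi-indices $\alpha_i,\beta_j,\gamma$ satisfy a partition constraint summing to $\nu$. Substituting the flatness bounds $|\partial_y^k\partial_\mu^\gamma f|\leqslant C_f\,g(s;\eta)^{L-k}$ and $|\partial_{\alpha_i}g|\leqslant C_g s^{L'-(\alpha_i)_0}$, and using local boundedness of the derivatives of $\phi$, the combinatorial exponent tally gives an $s$-exponent in each summand equal to $L'(L-k)+\sum_i(L'-(\alpha_i)_0)=LL'-\sum_i(\alpha_i)_0=LL'-\nu_0$, producing the desired bound $|\partial_\nu h|\leqslant Cs^{LL'-\nu_0}$. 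The delicate step, and the main technical hurdle, is handling $g(s;\eta)^{L-k}$ when $L-k<0$: here one uses the positivity hypothesis $g>0$ together with the upper bound $g\leqslant C_g s^{L'}$ and the fact that every such negative-exponent factor is paired by Faà di Bruno with enough derivative-of-$g$ factors to make the total exponent add up correctly, after which uniform choice of neighborhoods and constants finishes the proof.
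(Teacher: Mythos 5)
The paper does not prove this lemma; it is quoted verbatim from Mar\'in--Villadelprat \cite[Lemma~A.3]{MarVilDulacLocal}, so there is no internal proof to compare against. Judged on its own merits, your proposal for items (a)--(g) is sound and standard: each of those is indeed just bookkeeping with the pointwise definition via the Leibniz rule and monotonicity of the classes, exactly as you describe.

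The problem is item (h), and precisely at the step you yourself flag as ``the delicate step.'' Your claim --- that the positivity $g>0$, the one-sided bound $g\leqslant C_g s^{L'}$, and the Fa\`a di Bruno pairing of $\partial_y^k f$ with $k$ derivative-of-$g$ factors suffice to make ``the total exponent add up correctly'' when $L-k<0$ --- is not correct. The exponent accounting $L'(L-k)+\sum_i\bigl(L'-(\alpha_i)_0\bigr)=LL'-\nu_0$ secretly uses the two-sided estimate $g(s;\eta)^{L-k}\leqslant C' s^{L'(L-k)}$, and for $L-k<0$ this is a \emph{lower} bound on $g$, namely $g\geqslant c\,s^{L'}$, which is neither among the stated hypotheses nor implied by them. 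Concretely, already at first order with $L=0$, $L'=1$, $N=N'=0$: one has $|h'|\leqslant C\,g^{-1}|g'|\leqslant C\,g^{-1}s^{L'-1}$, and this is $O(s^{LL'-1})=O(s^{-1})$ if and only if $g\gtrsim s^{L'}$. Taking $f(y)=\sin(\log y)\in\mathcal{F}^\infty_0$ and $g(s)=s^2e^{-1/s}\in\mathcal{F}^\infty_1$ (both with $g>0$ and $L'=1>0$) yields $h(s)=\sin(2\log s-1/s)$ with $h'(s)=\cos(2\log s-1/s)\,(2/s+1/s^2)$, which is not $O(s^{-1})$ as $s\to0^+$; so the conclusion $h\in\mathcal{F}^\infty_{LL'}=\mathcal{F}^\infty_0$ genuinely fails in the absence of a lower bound. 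The ``pairing'' heuristic does not rescue the estimate because the upper bound $|\partial_s g|\leqslant Cs^{L'-1}$ does not control the ratio $|\partial_s g|/g$: there is no mechanism in the definition of $\mathcal{F}^K_{L'}$ that correlates smallness of $g$ with smallness of $\partial_s g$ at the same $s$.

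Thus your proof of (h) has a genuine gap that cannot be closed from the upper bounds alone. The way this is actually patched in all uses in the paper (and, implicitly, in the source lemma) is that $g$ is always a Dulac map or a composition thereof, which by Theorem~\ref{Teo:Dulacmap} has the Mourtada form $g(s;\eta)=s^{L'}\bigl(\Delta_{00}(\eta)+O(s^{\ell})\bigr)$ with $\Delta_{00}$ strictly positive, hence satisfies a two-sided estimate $c\,s^{L'}\leqslant g\leqslant C\,s^{L'}$ locally uniformly. Once that lower bound is assumed, your Fa\`a di Bruno argument does go through: $g^{L-k}\leqslant c^{L-k}s^{L'(L-k)}$ for $L-k<0$ (and $\leqslant C^{L-k}s^{L'(L-k)}$ for $L-k\geqslant 0$) combines with $\prod_i|\partial_{\alpha_i}g|\leqslant C\,s^{kL'-\nu_0}$ to give $|\partial_\nu h|\leqslant C\,s^{LL'-\nu_0}$ uniformly after shrinking the neighborhood. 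You should therefore either strengthen the hypothesis to include $g\gtrsim s^{L'}$ locally uniformly, or observe that in this paper $g$ is always of Mourtada type so that the hypothesis is automatically met, and in either case argue the lower bound explicitly rather than appealing to the combinatorics of Fa\`a di Bruno alone.
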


In what follows we prove another technical result about $\mathcal{F}^K_L(W)$. 

\begin{lem}\label{Lemma1}
    Given $K\in\mathbb{Z}_{\geqslant0}\cup\{\infty\}$, consider $a$, $b$, $\eta$, $\lambda\in\mathscr{C}^K(U)$ such that $b(\mu)\neq0$, $\lambda(\mu)>0$ for every $\mu\in U$  and denote $\lambda^0=\lambda(\mu_0)$. If $L\in(\lambda^0,2\lambda^0)$ then 
        \[\bigl(b+as^\lambda+\mathcal{F}^K_L(\mu_0)\bigr)^\eta=b^\eta+\eta b^{\eta-1}as^\lambda+\mathcal{F}^K_L(\mu_0),\]
    for $s>0$ small enough such that $\bigl|\frac{a}{b}s^\lambda\bigr|<1$.
\end{lem}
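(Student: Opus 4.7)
The plan is to factor out $b^{\eta}$ and exploit the Taylor expansion of $(1+u)^{\eta}$ at $u=0$, where $u:=\frac{a}{b}s^{\lambda}+\frac{R}{b}$ and $R\in\mathcal{F}^{K}_{L}(\mu_0)$ denotes the implicit remainder. The hypothesis $\bigl|\tfrac{a}{b}s^{\lambda}\bigr|<1$, combined with $R/b\to 0$ as $s\to 0^{+}$, guarantees $|u|<1$ for $s$ small and $\mu$ near $\mu_0$, so Taylor's formula with integral remainder yields
\[(1+u)^{\eta}=1+\eta u+u^{2}G(u,\eta),\qquad G(u,\eta):=\eta(\eta-1)\int_{0}^{1}(1-\tau)(1+\tau u)^{\eta-2}\,d\tau,\]
with $G\in C^{\infty}((-1,1)\times\mathbb{R})$. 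Substituting this back and expanding $\eta u$ produces
\[\bigl(b+as^{\lambda}+R\bigr)^{\eta}=b^{\eta}+\eta b^{\eta-1}as^{\lambda}+\eta b^{\eta-1}R+b^{\eta}u^{2}G(u,\eta),\]
so the identity to prove reduces to showing that the last two summands lie in $\mathcal{F}^{K}_{L}(\mu_0)$.

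The summand $\eta b^{\eta-1}R$ belongs to $\mathcal{F}^{K}_{L}(\mu_0)$ directly by items $(c)$ and $(g)$ of Lemma~\ref{LemaPropFlat}. For the other summand I would first observe that $s^{\lambda}\in\mathcal{F}^{K}_{\lambda^{0}-\epsilon}(\mu_0)$ for every small $\epsilon>0$: the $s$-derivative lowers the exponent by one as usual, while $\mu$-derivatives produce $\log s$ factors that are absorbed into the exponent by a further shrinking of $\epsilon$. Since $L>\lambda^{0}$, items $(d)$ and $(e)$ then give $u\in\mathcal{F}^{K}_{\lambda^{0}-\epsilon}(\mu_0)$ and, via $(g)$, $u^{2}\in\mathcal{F}^{K}_{2(\lambda^{0}-\epsilon)}(\mu_0)\subset\mathcal{F}^{K}_{L}(\mu_0)$, where the last inclusion uses $(d)$ and is licit precisely because $L<2\lambda^{0}$ allows us to choose $\epsilon$ with $2(\lambda^{0}-\epsilon)\geqslant L$.

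The delicate step is to upgrade this to $u^{2}G(u,\eta)\in\mathcal{F}^{K}_{L}(\mu_0)$. Since $G(u(s,\mu),\eta(\mu))$ is not in general $\mathscr{C}^{K}$ at $s=0$ and $u$ need not be positive, item $(h)$ of Lemma~\ref{LemaPropFlat} does not apply to $G$ in isolation. I would therefore treat the full remainder $F(u,\eta):=u^{2}G(u,\eta)=(1+u)^{\eta}-1-\eta u$ as a unit, using its binomial expansion $F(u,\eta)=\sum_{k\geqslant 2}\binom{\eta}{k}u^{k}$ to obtain the vanishing bound $\partial_{u}^{j}\partial_{\eta}^{k}F(u,\eta)=O\bigl(u^{\max(2-j,\,0)}\bigr)$ as $u\to 0$, uniformly for $\eta$ in compact sets. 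The main obstacle is then the Faà di Bruno bookkeeping: combining the above with the flatness bounds $|\partial^{\alpha}u|\leqslant Cs^{\lambda^{0}-\epsilon-\alpha_{0}}$ and using that $\sum_{l}\alpha_{l,0}=\nu_{0}$ (because $\eta$ carries no $s$-dependence), every term of $\partial_{\nu}[F(u,\eta)]$ has $s$-order at least
\[\max(2-j,0)(\lambda^{0}-\epsilon)+j(\lambda^{0}-\epsilon)-\nu_{0}=\max(2,j)(\lambda^{0}-\epsilon)-\nu_{0}\geqslant 2(\lambda^{0}-\epsilon)-\nu_{0},\]
where $j$ counts the external factors $\partial^{\alpha_{l}}u$ in that term. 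Shrinking $\epsilon$ once more to make $2(\lambda^{0}-\epsilon)\geqslant L$ yields $|\partial_{\nu}F|\leqslant Cs^{L-\nu_{0}}$, and finally $(g)$ absorbs the $\mathscr{C}^{K}$ factor $b^{\eta}$, concluding the proof.
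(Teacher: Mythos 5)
Your proof is correct, and it takes a genuinely different route from the paper's. The paper also reduces to $b\equiv 1$, but then splits the perturbation in two stages: it first invokes the Generalized Binomial Theorem to get $(1+as^\lambda)^\eta=1+\eta as^\lambda+\mathcal{F}^K_L(\mu_0)$ and $(1+as^\lambda)^{-1}\in\mathcal{F}^K_0(\mu_0)$, and then absorbs the flat remainder $R$ by factoring $\bigl(1+as^\lambda+R\bigr)^\eta=(1+as^\lambda)^\eta\bigl(1+(1+as^\lambda)^{-1}R\bigr)^\eta$ and applying the GBT again together with Lemma~\ref{LemaPropFlat}(e,g); the membership of the binomial tails in $\mathcal{F}^K_L(\mu_0)$ (including all $s$- and $\mu$-derivatives of the series) is asserted rather than verified. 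You instead treat the whole perturbation $u=\tfrac{a}{b}s^\lambda+\tfrac{R}{b}$ at once, isolate the exact second-order remainder $F(u,\eta)=(1+u)^\eta-1-\eta u$, and prove $F(u,\eta)\in\mathcal{F}^K_L(\mu_0)$ by hand: the uniform vanishing orders $\partial_u^j\partial_\eta^kF=O(u^{\max(2-j,0)})$, the flatness $u\in\mathcal{F}^K_{\lambda^0-\epsilon}(\mu_0)$ (with the $\log s$ factors from $\mu$-derivatives of $s^\lambda$ absorbed into $\epsilon$), and the Fa\`a di Bruno count $\max(2,j)(\lambda^0-\epsilon)-\nu_0\geqslant L-\nu_0$ are all handled correctly, including the correct observation that Lemma~\ref{LemaPropFlat}(h) is not applicable since $u$ need not be positive. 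Your version is more laborious but also more self-contained, since it makes explicit precisely the derivative estimates that the paper's appeal to the GBT leaves implicit; the paper's version is shorter and reuses the composition machinery of Lemma~\ref{LemaPropFlat} more systematically.
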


\begin{proof}
    We first prove for the case $b(\mu)\equiv 1$. From the Generalized Binomial Theorem~\ref{GBT} (GBT) we have that
        \[(1+as^\lambda)^{-1}=1-as^\lambda+\mathcal{F}^K_L(\mu_0),\]
    for $s>0$ small enough such that $|a s^\lambda|<1$. In particular we have $(1+as^\lambda)^{-1}\in\mathcal{F}^K_0(\mu_0)$. Furthermore it also follows from the GBT that
    \begin{equation}\label{1}
        (1+as^\lambda)^\eta=1+\eta a s^{\lambda}+\mathcal{F}^K_L(\mu_0),
    \end{equation}
    for $s>0$ small enough such that $|as^\lambda|<1$. Hence $(1+as^\lambda)^\eta\in\mathcal{F}^K_0(\mu_0)$. Now observe that
        \[\begin{array}{ll}
            \bigl(1+as^\lambda+\mathcal{F}^K_L(\mu_0)\bigr)^\eta-(1+as^\lambda)^\eta &=(1+as^\lambda)^\eta\bigl[\bigl(1+(1+as^\lambda)^{-1}\mathcal{F}^K_L(\mu_0)\bigr)^\eta-1\bigr] \vspace{0.2cm} \\
            &= (1+as^\lambda)^\eta\bigl[\bigl(1+\mathcal{F}^K_L(\mu_0)\bigr)^\eta-1\bigr] \vspace{0.2cm} \\
            &= (1+as^\lambda)^\eta\mathcal{F}^K_L(\mu_0)=\mathcal{F}^K_L(\mu_0),
        \end{array}\]
    where the second equality follows from $(1+as^\lambda)^{-1}\in\mathcal{F}^K_0(\mu_0)$ in addition with Lemma~\ref{LemaPropFlat}(g), the third equality follows from the GBT and the fourth one following from $(1+as^\lambda)^\eta\in\mathcal{F}^K_0(\mu_0)$ and Lemma~\ref{LemaPropFlat}(g). Thus we conclude that,
        \[\bigl(1+as^\lambda+\mathcal{F}^K_L(\mu_0)\bigr)^\eta=(1+as^\lambda)^\eta+\mathcal{F}^K_L(\mu_0).\]
    This in addition with \eqref{1} and Lemma~\ref{LemaPropFlat}(e) implies that
        \[\bigl(1+as^\lambda+\mathcal{F}^K_L(\mu_0)\bigr)^\eta=1+\eta as^\lambda+\mathcal{F}^K_L(\mu_0).\]
    The general case now follows from observing that
        \[\begin{array}{l}
             \displaystyle \bigl(b+as^\lambda+\mathcal{F}^K_L(\mu_0)\bigr)^\eta = b^\eta \left(1+\frac{a}{b}s^\lambda+\mathcal{F}^K_L(\mu_0)\right)^\eta \vspace{0.2cm} \\
             \qquad\quad\displaystyle= b^\eta\left(1+\eta \frac{a}{b}s^\lambda+\mathcal{F}^K_L(\mu_0)\right) = b^\eta+\eta b^{\eta-1}a s^\lambda+\mathcal{F}^K_L(\mu_0),
        \end{array}\]
    provided $s>0$ is small enough such that $\bigl|\frac{a}{b}s^\lambda\bigr|<1$.
\end{proof}

\begin{defi}\label{Def8}
	The function defined for $s>0$ and $\alpha\in\mathbb{R}$ by means of
	\begin{equation}\label{ERC}
            \omega(s;\alpha)=\left\{\begin{array}{c}
		\frac{s^{-\alpha}-1}{\alpha}\quad\text{if }\alpha\neq 0,\\
		-\ln s \quad\text{if }\alpha=0,
	\end{array}\right.
        \end{equation}
	is called \emph{\'Ecalle--Roussarie compensator}.
\end{defi}

The properties of the \'Ecalle--Roussarie compensator are studied in detail in~\cite[Appendix~A]{MarVilDulacLocal}. We highlight three of these properties in the next lemma.

\begin{lem}[{\cite[Lemma A.4]{MarVilDulacLocal}}]\label{Lema:omega}
The following holds for the \'Ecalle--Roussarie compensator:
\begin{itemize}
\item $\partial_s\omega(s;\alpha)=-s^{-\alpha-1}$; 
\item $\lim\limits_{s\to 0^+}\dfrac{1}{\omega(s;\alpha)}=\max\{-\alpha,0\}$ uniformly on $\alpha\in\mathbb{R}$ and in particular, 
    \[\lim\limits_{(s,\alpha)\to (0^+,0)}\dfrac{1}{\omega(s;\alpha)}=0;\]
    \item $\omega(s;\alpha),\frac{1}{\omega(s;\alpha)}\in\mathcal{F}^\infty_{-\delta}(\{\alpha<\delta\})$ for every $\delta>0$.
\end{itemize} 
\end{lem}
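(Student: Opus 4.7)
The plan is to exploit the unified integral representation
\begin{equation*}
\omega(s;\alpha)=\int_s^1 t^{-\alpha-1}\,dt,
\end{equation*}
which recovers both branches of \eqref{ERC}: for $\alpha\neq 0$ the antiderivative $-t^{-\alpha}/\alpha$ gives $(s^{-\alpha}-1)/\alpha$, and for $\alpha=0$ one obtains $\int_s^1 t^{-1}\,dt=-\ln s$. This bypasses the need to treat $\alpha=0$ separately throughout.

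The first item is then immediate from the fundamental theorem of calculus. For the second item I split on the sign of $\alpha$: when $\alpha<0$ the integrand is integrable on $(0,1)$, so $\omega(s;\alpha)\to -1/\alpha$ and hence $1/\omega\to -\alpha=\max\{-\alpha,0\}$; when $\alpha\geqslant 0$ the integral diverges at $0$, so $\omega\to\infty$ and $1/\omega\to 0$. To upgrade this to uniformity in $\alpha\in\mathbb{R}$, I rewrite
\begin{equation*}
\left|\frac{1}{\omega(s;\alpha)}-\max\{-\alpha,0\}\right|=\frac{|\alpha|\,s^{|\alpha|}}{|1-s^{|\alpha|}|}
\end{equation*}
(with the analogous identity for $\alpha>0$), introduce the rescaling $u=|\alpha||\ln s|$, and use $\sup_{u>0}ue^{-u}=1/e$ to obtain an $\alpha$-independent bound of order $1/|\ln s|$; the near-zero regime $|\alpha||\ln s|\to 0$ is treated via a first-order expansion of $1-s^{|\alpha|}\approx |\alpha||\ln s|$.

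For the third item I treat $\omega$ first. Fix $\alpha_0<\delta$ and choose $\delta'\in(\alpha_0,\delta)$; on the neighbourhood $V=(-\infty,\delta')$ of $\alpha_0$ the bound $t^{-\alpha-1}\leqslant t^{-\delta'-1}$ on $(0,1)$ and the integral representation give $|\omega(s;\alpha)|\leqslant C s^{-\delta'}\leqslant C s^{-\delta}$ for $s\in(0,1)$. Derivatives in $s$ are explicit by the first item: $\partial_s^{j}\omega=p_j(\alpha)\,s^{-\alpha-j}$ for a polynomial $p_j$, and these satisfy the required bound on $V$. Derivatives in $\alpha$ follow by differentiation under the integral, producing factors $(\ln t)^k$; the elementary estimate $|\ln t|^k\leqslant C_{k,\eta}t^{-\eta}$ on $(0,1)$ absorbs the logarithms at the cost of an arbitrarily small loss $\eta>0$ in the exponent, harmless after choosing $\delta'+\eta<\delta$. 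Since in Definition~\ref{defiFlat} the neighbourhood and constants may depend on $\nu$, this handles all mixed derivatives. For $1/\omega$, the second item ensures that for $s$ small $\omega(s;\alpha)$ is bounded away from zero on a small neighbourhood of any $\alpha_0<\delta$, so $1/\omega$ is smooth and bounded there; this places $1/\omega$ in $\mathcal{F}^\infty_0$, hence in $\mathcal{F}^\infty_{-\delta}$ by Lemma~\ref{LemaPropFlat}(d), and the mixed-derivative bounds follow via the chain rule combined with the already-established bounds on $\omega$.

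The main obstacle I anticipate is the uniformity claim in the second item, since $\omega$ changes qualitative behaviour at $\alpha=0$. The rescaling $u=|\alpha||\ln s|$ is the essential device: it collapses the degenerate parameter regime into a single elementary extremum and thereby yields the $\alpha$-independent control of order $1/|\ln s|$ needed to deduce uniform convergence on all of $\mathbb{R}$.
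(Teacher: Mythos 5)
The paper does not prove this lemma at all: it is quoted verbatim from \cite[Lemma A.4]{MarVilDulacLocal}, so your self-contained argument is a genuinely different (and more informative) route. The core of your proof is sound. The integral representation $\omega(s;\alpha)=\int_s^1 t^{-\alpha-1}\,dt$ does unify the two branches of \eqref{ERC}, the first item is immediate, and your rescaling $u=|\alpha|\,|\ln s|$ is exactly the right device for the uniformity in the second item: it yields the clean identity $\bigl|\tfrac{1}{\omega(s;\alpha)}-\max\{-\alpha,0\}\bigr|=\tfrac{1}{|\ln s|}\cdot\tfrac{u}{e^{u}-1}\leqslant\tfrac{1}{|\ln s|}$ for all $\alpha\in\mathbb{R}$, which is even slightly cleaner than invoking $\sup_u ue^{-u}=1/e$ plus a separate expansion near $u=0$. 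The treatment of $\omega$ in the third item (explicit $s$-derivatives, differentiation under the integral, absorbing $(\ln t)^k$ into $t^{-\eta}$ with $\delta'+\eta<\delta$) is correct, with the cosmetic caveat that you should take a \emph{bounded} neighbourhood of $\alpha_0$ rather than $V=(-\infty,\delta')$, since the polynomial factors $p_j(\alpha)$ in $\partial_s^j\omega=p_j(\alpha)s^{-\alpha-j}$ are unbounded on a half-line.

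The one genuine flaw is the assertion that $1/\omega\in\mathcal{F}^\infty_0$. Boundedness of $1/\omega$ only gives the zeroth-order estimate; the higher derivatives fail the $\mathcal{F}^\infty_0$ bounds. For instance, at $\alpha=0$ one computes $\partial_\alpha\omega=\tfrac{1}{2}(\ln s)^2$ and $\partial_\alpha^2\omega=\tfrac{1}{3}|\ln s|^3$, whence $\partial_\alpha^2(1/\omega)=-\tfrac{\partial_\alpha^2\omega}{\omega^2}+\tfrac{2(\partial_\alpha\omega)^2}{\omega^3}=\tfrac{1}{6}|\ln s|$, which is unbounded as $s\to0^+$ and therefore violates the $\mathcal{F}^\infty_0$ requirement $|\partial_\nu(1/\omega)|\leqslant Cs^{0}$. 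So you cannot pass through $\mathcal{F}^\infty_0$ and then invoke Lemma~\ref{LemaPropFlat}(d); you must target $\mathcal{F}^\infty_{-\delta}$ directly, which your fallback (chain rule plus the bounds already proved for $\omega$ and the uniform bound $1/\omega\leqslant C$) does deliver, since $|\ln s|^K\leqslant C_\eta s^{-\eta}$ for any $\eta>0$. One further detail worth making explicit there: a generic term of $\partial_\nu(1/\omega)$ has the form $\omega^{-(m+1)}\prod_j\partial_{\nu^{(j)}}\omega$ and carries a factor $s^{-m\alpha}$; near $\alpha_0=0$ the crude bound $\omega^{-1}\leqslant C$ is not enough by itself, and one should split on $|\alpha|\,|\ln s|\gtrless\ln 2$ (using $\omega^{-1}\leqslant 2\alpha s^{\alpha}$ in the first regime and $s^{-m\alpha}\leqslant 2^{m}$ in the second) to see that $s^{-m\alpha}\omega^{-(m+1)}$ stays uniformly bounded. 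With these repairs the proof is complete.
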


\subsection{The Dulac map}\label{Sec:Dulac}

Since we deal with persistent hyperbolic polycycles, we need to work with the Dulac map and Dulac time associated to hyperbolic saddles. We follow closely the construction made in~\cite{MarVilDulacLocal,MarVilDulacGeneral,MarVilDulacCoef} where the specifics are carried out extensively. We encourage the reader to seek these references for a substantial understanding of the Dulac map and time. Here, we only state the results necessary for our investigation.

We consider an open set $\Lambda\subset\mathbb{R}^N$ and the family $\{X_{\mu}\}_{\mu\in\Lambda}$ of vector fields given by:
\begin{equation}\label{eq:X1}
	X_{\mu}:=\dfrac{1}{x^{n_1}y^{n_2}}\left(xP(x,y;\mu)\partial_x+yQ(x,y;\mu)\partial_y\right).
\end{equation}
Here,
\begin{itemize}
	\item $\mathtt{n}:=(n_1,n_2)\in\mathbb{Z}_{\geqslant 0}^2$;
	\item $P,Q\in \mathscr C^{\infty}(V\times \Lambda),$ for some open set $V\subset\mathbb{R}^2$ containing the origin;
	\item $P(x,0;\mu)>0$ and $Q(0,y;\mu)<0$, for all $(x,0), (0,y)\in V$ and $\mu\in\Lambda$. This means that the origin is a hyperbolic saddle of $x^{n_1}y^{n_2}X_\mu$ with the $y$-axis being the stable manifold and $x$-axis the unstable manifold;
	\item $\lambda(\mu)=-\dfrac{Q(0,0;\mu)}{P(0,0;\mu)}$ is the hyperbolic ratio of the saddle.
\end{itemize}

For $i\in\{1,2\}$, let $\sigma_i:(-\varepsilon,\varepsilon)\times\Lambda\to\Sigma_i$ be transverse sections of $X_{\mu}$ to the axis such that

\begin{figure}[ht]
	\begin{center}		
		\begin{overpic}[width=0.8\textwidth]{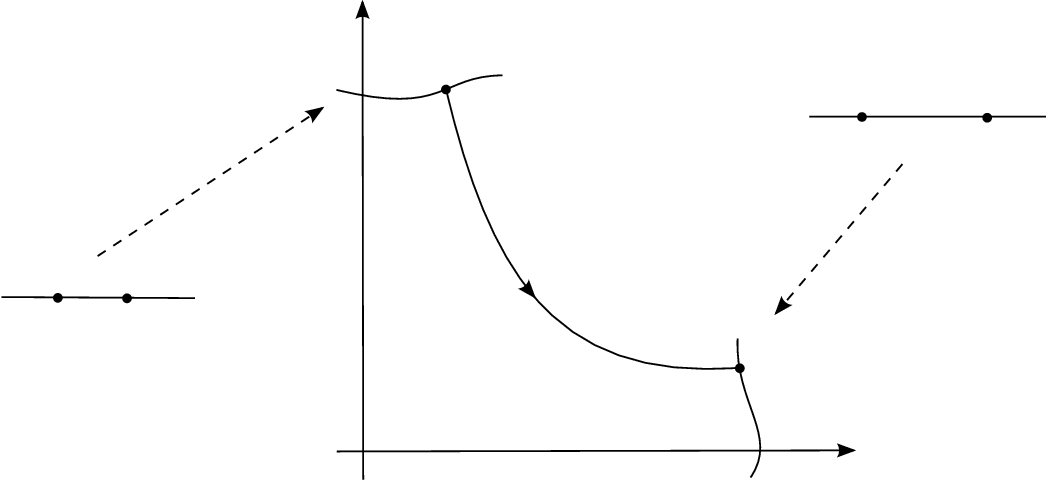}
			\put(4.7,14.5) {$0$}
			\put(11.2,14.8) {$s$}
			\put(18,30) {$\sigma_1$}
			\put(28,38) {$\Sigma_1$}
			\put(52,20) {$\varphi(\cdot,\sigma_1(s))$}
			\put(39,39.3) {$\sigma_1(s)$}
			\put(72,10) {$\sigma_2(D(s))=\varphi(T(s),\sigma_1(s))$}
                \put(73,-1) {$\Sigma_2$}
			\put(77,25) {$\sigma_2$}
			\put(81.5,31.4) {$0$}
			\put(90,31.4) {$D(s)$}
		\end{overpic}
	\end{center}
	\caption{The Dulac map and time.}\label{FigDulac}
\end{figure}

$\sigma_1(0;\mu)\in\{(0,y):y>0\}$ and $\sigma_2(0;\mu)\in\{(x,0):x>0\}$ for all $\mu\in\Lambda$. The Dulac map $D(\cdot;\mu)$ and the Dulac time $T(\cdot;\mu)$ are defined by the following relationship:
    \[\varphi(T(s;\mu),\sigma_1(s;\mu);\mu)=\sigma_2(D(s;\mu);\mu),\;\forall s\in (0,\varepsilon),\]
where $\varphi(t,p_0;\mu)$ is the solution of $X_{\mu}$ with initial condition $\varphi(0,p_0;\mu)=p_0$ (see Figure~\ref{FigDulac}).

The following result is a particular case of Theorem B in \cite{MarVilDulacGeneral}. See also Theorem~$C.5$ and Remark~$1.1$ of \cite{MarVilDulacCoef}.

\begin{teo}\label{Teo:Dulacmap}
Let $D(s;\mu)$ be the Dulac map of the hyperbolic saddle \eqref{eq:X1} from $\Sigma_1$ to $\Sigma_2$. Then, for $\lambda^0=\lambda(\mu_0)$, the following holds.
\begin{itemize}
    \item[$(a)$] For $\lambda^0<1$, and $\ell\in(\lambda^0,\min\{2\lambda^0,1\})$,
        \[D(s;\mu)=s^\lambda\bigl(\Delta_{00}(\lambda,\mu)+\Delta_{01}(\lambda,\mu)s^\lambda+\mathcal{F}_{\ell}^{\infty}(\mu_0)\bigr),\]
where $\Delta_{00}\in \mathscr C^\infty(\{(0,\infty)\}\times\Lambda)$ and $\Delta_{01}\in \mathscr C^\infty(\{(0,\infty)\setminus\mathbb{N}\}\times\Lambda)$. Moreover, $\Delta_{00}$ is strictly positive;
    \item[$(b)$] For $\lambda^0>1$, and $\ell\in(1,\min\{\lambda^0,2\})$,
        \[D(s;\mu)=s^\lambda\bigl(\Delta_{00}(\lambda,\mu)+\Delta_{10}(\lambda,\mu)s+\mathcal{F}_{\ell}^{\infty}(\mu_0)\bigr),\]
where $\Delta_{10}\in \mathscr C^\infty(\{(0,\infty)\setminus\frac{1}{\mathbb{N}}\}\times\Lambda)$;
\item[$(c)$] For $\lambda^0=1$, and $\ell\in (1,2)$,
    \[D(s;\mu)=s^\lambda\bigl(\Delta_{00}(\lambda,\mu)+\mathbf{\Delta}_{10}(\lambda,\mu)s+\mathcal{F}_{\ell}^{\infty}(\mu_0)\bigr),\]
where $\mathbf{\Delta}_{10}=\Delta_{10}+\Delta_{01}(1+\alpha\omega(s;\alpha))$ and $\alpha=1-\lambda$.     
\end{itemize}
\end{teo}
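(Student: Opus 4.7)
The statement is presented as a particular case of Theorem~B of~\cite{MarVilDulacGeneral} (together with Theorem~C.5 and Remark~1.1 of~\cite{MarVilDulacCoef}), so a rigorous proof consists in verifying that the hypotheses of those references are met by the family~\eqref{eq:X1} and invoking them. Still, the strategy underlying those references is what I would reproduce here, and it runs as follows.

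First I would strip the prefactor $x^{-n_1}y^{-n_2}$ by a time reparametrization: the orbits of $X_\mu$ coincide with those of $Y_\mu:=xP\partial_x+yQ\partial_y$, so they share the same Dulac map $D(s;\mu)$ (only the Dulac time is affected). Then I would apply a smooth parametric version of the Sternberg--Chen / Bonckaert--Dumortier normalization to bring $Y_\mu$, uniformly in $\mu$ on a neighborhood of $\mu_0$, to a resonant normal form in which the flow is explicit. The crucial observation is that the finite order of flatness $\ell$ asked for in the theorem forces only a \emph{finite}-order normalization, which is achievable smoothly even in the presence of resonances; anything beyond this prescribed order is pushed into the flat remainder.

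Second I would compute $D$ in normal-form coordinates. At the purely linear level the flow $(x,y)\mapsto(xe^t,ye^{-\lambda t})$ sends the section $\{y=\delta\}$ to $\{x=\delta\}$ via $s\mapsto \delta(s/\delta)^\lambda$, producing the leading $s^\lambda$ factor and, after pulling back through $\sigma_1$ and $\sigma_2$, the positive coefficient $\Delta_{00}(\lambda,\mu)$ (explicit formulas in Appendix~\ref{App:Dulac}). Keeping the next resonant term of the normal form produces a correction of order $s$ (coming from the unstable eigendirection, dominant when $\lambda^0>1$, whence part~(b)) or of order $s^\lambda$ (coming from the stable eigendirection, dominant when $\lambda^0<1$, whence part~(a)); the corresponding coefficients, read off from the normalizing change of coordinates, are $\Delta_{10}$ and $\Delta_{01}$. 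Their poles at $\lambda\in1/\mathbb{N}$ and $\lambda\in\mathbb{N}$, respectively, are precisely the strong-resonance denominators one encounters when trying to separate these two corrections.

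Third, and this is where the hard part lives, I would control the remainder in $\mathcal{F}_\ell^\infty(\mu_0)$. Using Lemma~\ref{LemaPropFlat}(g),(h) and Lemma~\ref{Lemma1} together with the properties of the \'Ecalle--Roussarie compensator in Lemma~\ref{Lema:omega}, every term in the normal form that has been truncated beyond the chosen order is shown to contribute a function in $\mathcal{F}_\ell^\infty(\mu_0)$ for the $\ell$ prescribed in each case; this requires careful bookkeeping of all $s$- and $\mu$-derivatives. The genuinely delicate step is the resonant case $\lambda^0=1$ of part~(c): there the two correction terms $\Delta_{10}s$ and $\Delta_{01}s^\lambda$ become indistinguishable at $\mu_0$, the individual coefficients blow up, and the only uniform object is their combination $\mathbf{\Delta}_{10}=\Delta_{10}+\Delta_{01}(1+\alpha\omega(s;\alpha))$ with $\alpha=1-\lambda$. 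Showing that this combination remains smooth across $\alpha=0$, so that no spurious singularity survives the limit $\lambda\to 1$, is exactly the content of Theorem~C.5 of~\cite{MarVilDulacCoef}; the compensator $\omega(s;\alpha)$ is engineered precisely to absorb the $(1-\lambda)^{-1}$ pole, and Lemma~\ref{Lema:omega} is what ultimately guarantees the claimed flatness in this borderline case.
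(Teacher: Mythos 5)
The paper offers no proof of this theorem: it is stated verbatim as ``a particular case of Theorem B in \cite{MarVilDulacGeneral}'' together with Theorem~C.5 and Remark~1.1 of \cite{MarVilDulacCoef}, and the only work the paper does is to specialize the notation of those references to the form~\eqref{eq:X1}. You correctly recognize this, so your proposal matches the paper's approach, and your appended sketch (time reparametrization to remove the prefactor $x^{-n_1}y^{-n_2}$, finite-order smooth normalization, reading off $\Delta_{00}$, $\Delta_{10}$, $\Delta_{01}$ from the normal-form flow, controlling the truncation error as a finitely flat function, and using the \'Ecalle--Roussarie compensator to resolve the $\lambda^0=1$ resonance) is a fair high-level account of the strategy actually carried out in those cited works, though of course it is an outline rather than a substitute for them.
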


\begin{obs}
    Under the hypothesis of Theorem~\ref{Teo:Dulacmap}$(a)$, although $\Delta_{01}$ may not be well defined for $\lambda\in\mathbb{N}$, these values are unreachable due to the hypothesis of $\lambda^0<1$. More precisely, from the initial condition $\lambda^0<1$ we have that there is a neighborhood of $U$ of $\mu_0$ such that $\lambda<1$ for every $\mu\in U$. Hence, for our purposes in this paper, we can suppose that $\Delta_{01}$ is always well-defined. Similarly for Theorem~\ref{Teo:Dulacmap}$(b)$.
\end{obs}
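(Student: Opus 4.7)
The plan is to give a short continuity argument. The hyperbolicity ratio $\lambda(\mu)=-Q(0,0;\mu)/P(0,0;\mu)$ is a smooth (in particular continuous) function of $\mu$ on $\Lambda$, because $P,Q\in\mathscr{C}^{\infty}(V\times\Lambda)$ and $P(0,0;\mu)>0$ on $\Lambda$ by hypothesis. Since $\lambda^0=\lambda(\mu_0)<1$, continuity of $\lambda$ furnishes an open neighborhood $U\subset\Lambda$ of $\mu_0$ on which $\lambda(\mu)<1$ for every $\mu\in U$.

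Once $U$ is fixed, I would conclude by noting that the set of forbidden values for $\Delta_{01}$, namely $\mathbb{N}=\{1,2,3,\dots\}$, has empty intersection with $(0,1)$. Thus for every $\mu\in U$ the pair $(\lambda(\mu),\mu)$ lies in $\{(0,\infty)\setminus\mathbb{N}\}\times\Lambda$, which is precisely the domain of $\Delta_{01}$ given in Theorem~\ref{Teo:Dulacmap}(a). Hence $\mu\mapsto\Delta_{01}(\lambda(\mu),\mu)$ is a well-defined smooth function on $U$, and for all arguments in the rest of the paper we may freely assume $\Delta_{01}$ is defined.

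For the statement about Theorem~\ref{Teo:Dulacmap}(b), the argument is entirely analogous: the hypothesis $\lambda^0>1$ combined with continuity of $\lambda$ yields a neighborhood on which $\lambda(\mu)>1$, and since $\tfrac{1}{\mathbb{N}}\subset(0,1]$ is disjoint from $(1,\infty)$, the function $\Delta_{10}(\lambda(\mu),\mu)$ is well defined there. There is no real obstacle here; the whole remark is a one-line consequence of continuity and the elementary observation that $\mathbb{N}\cap(0,1)=\emptyset$ and $\tfrac{1}{\mathbb{N}}\cap(1,\infty)=\emptyset$.
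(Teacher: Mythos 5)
Your argument is correct and is essentially identical to the one-line reasoning the paper itself gives inside the remark: continuity of the hyperbolicity ratio $\lambda(\mu)$ yields a neighborhood where $\lambda<1$ (resp.\ $\lambda>1$), and this interval avoids $\mathbb{N}$ (resp.\ $\tfrac{1}{\mathbb{N}}$). You merely spell out the continuity of $\lambda$ from its formula and state the elementary set-disjointness explicitly, which is a welcome clarification but not a different approach.
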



We observe that that Theorem~\ref{Teo:Dulacmap}, in the way it was stated, applies to hyperbolic saddles at the origin and for which the separatrices are contained in the orthogonal axis. However, this is not a restrictive assumption since we can translate the saddle and rectify its separatrices via a smooth family diffeomorphism, see~\cite[Lemma~$4.3$]{MarVilDulacGeneral}.

\section{The return map of a persistent polycycle}\label{Sec:Returnmap}

Consider a smooth family $\{X_\mu\}_{\mu\in\Lambda}$ of planar smooth vector fields having a persistent polycycle $\Gamma$ with $n$ hyperbolic saddles, namely $p_1,\dots,p_n$. For $i\in\{1,\dots,n\}$, let $\Sigma_i$ be a transversal section to the connection $\gamma_i$ from $p_{i-1}$ to $p_i$ (set $p_0=p_n$), and $D_i=D_i(\cdot;\mu)$ be the corresponding Dulac map, see Figure~\ref{Fig4}. 
\begin{figure}[ht]
	\begin{center}		
		\begin{overpic}[width=8cm]{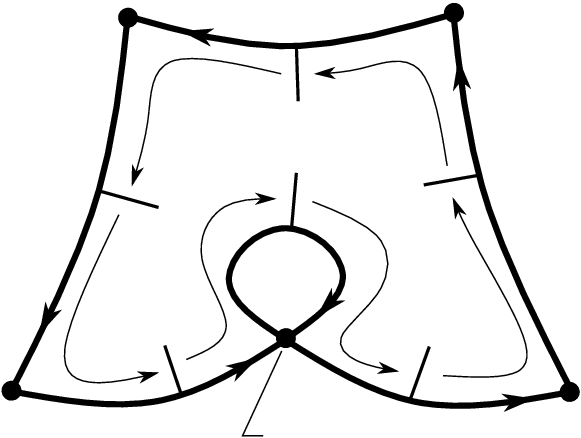} 
                \put(81,74){$p_1$}
                \put(15,74){$p_2$}
                \put(-1,3){$p_3$}
                \put(46,-0.5){$p_4=p_5$}
                \put(97,3){$p_6$}
                \put(84,44){$\Sigma_1$}
                \put(49,69){$\Sigma_2$}
                \put(9.5,42){$\Sigma_3$}
                \put(30,2){$\Sigma_4$}
                \put(47,30.5){$\Sigma_5$}
                \put(68,1){$\Sigma_6$}
                \put(67,55){$D_1$}
                \put(28,55){$D_2$}
                \put(14,15){$D_3$}
                \put(27,30){$D_4$}
                \put(68,30){$D_5$}
                \put(82,15){$D_6$}
		\end{overpic}
	\end{center}
	\caption{Illustration of the Dulac maps of a polycycle $\Gamma$.}\label{Fig4}
\end{figure}

For the remainder of this paper, we denote with a superscript the index of which Dulac map with which we are working, i.e. $\Delta_{jk}^i$ and $S_j^i$ denotes the coefficient $\Delta_{jk}$ and quantity $S_j$ in the Dulac map $D_i(s,\mu)$.

For a point in position $s$ at $\Sigma_1$, we define the return map of $\Gamma$ by
\begin{equation*}
	\mathscr{R}(s;\mu)=D_n\circ\cdots\circ D_1(s;\mu).
\end{equation*}
Thus, it is essential to understand the composition of Dulac maps to investigate the cyclicity as isolated fixed points of $\mathscr{R}$ correspond to limit cycles. 


\subsection{Composition of Dulac maps}

In this section we study the composition of Dulac maps. For our first result, we observe that from Theorem~\ref{Teo:Dulacmap} we have that the leading term of a Dulac map does not depend on the sign of $\lambda_i^0-1$ (it could even be zero). 

\begin{lem}\label{Lemma2}
    Let $\{X_\mu\}_{\mu\in\Lambda}$ be a smooth family of planar smooth vector fields having a persistent polycycle $\Gamma$ with hyperbolic saddles $p_1$ and $p_2$ and consider $\mu_0\in\Lambda$. Then for any given $\ell\in(0,\min\{1,\lambda_1^0,\lambda_1^0\lambda_2^0\})$ it holds
        \[D_2\circ D_1(s;\mu)=s^{\lambda_1\lambda_2}\bigl(\Upsilon_0+\mathcal{F}^\infty_\ell(\mu_0)\bigr),\]
    where $\Upsilon_0=(\Delta_{00}^1)^{\lambda_2}\Delta_{00}^2$.
\end{lem}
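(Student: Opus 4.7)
The plan is to substitute the first-order Dulac-type expansion~\eqref{16} for each of the two Dulac maps, expand the composition as a product of two factors (a power and a sum), and control each factor using the closure properties of $\mathcal{F}^\infty_\ell(\mu_0)$ from Lemma~\ref{LemaPropFlat}.

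Concretely, for $i\in\{1,2\}$ and any $\ell_i\in(0,\min\{\lambda_i^0,1\})$, by~\eqref{16} I would write $D_i(s;\mu)=s^{\lambda_i}(\Delta_{00}^i+R_i(s;\mu))$ with $R_i\in\mathcal{F}^\infty_{\ell_i}(\mu_0)$ and $\Delta_{00}^i>0$. Substituting gives
\[D_2\circ D_1(s;\mu)=s^{\lambda_1\lambda_2}\bigl(\Delta_{00}^1+R_1\bigr)^{\lambda_2}\bigl(\Delta_{00}^2+R_2(D_1(s;\mu);\mu)\bigr).\]
I would then handle the two bracketed factors separately. For the first, the same Generalized Binomial Theorem argument used in the proof of Lemma~\ref{Lemma1} (applied with no linear term, i.e.\ $a=0$), together with $(\Delta_{00}^1)^{-1}\in\mathscr C^\infty$ and Lemma~\ref{LemaPropFlat}(c),(g), yields
\[(\Delta_{00}^1+R_1)^{\lambda_2}=(\Delta_{00}^1)^{\lambda_2}+\mathcal{F}^\infty_{\ell_1}(\mu_0).\]
For the second, fix any $L_1\in(0,\lambda_1^0)$; since $\Delta_{00}^1>0$, the map $D_1$ is positive for small $s>0$ and, on a suitable neighborhood of $\mu_0$ where $\lambda_1>L_1$, one has $D_1\in\mathcal{F}^\infty_{L_1}(\mu_0)$ because $s^{\lambda_1}\in\mathcal{F}^\infty_{L_1}$ and $\Delta_{00}^1+R_1\in\mathcal{F}^\infty_0(\mu_0)$. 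Lemma~\ref{LemaPropFlat}(h) then gives $R_2(D_1(s;\mu);\mu)\in\mathcal{F}^\infty_{\ell_2 L_1}(\mu_0)$.

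Finally I would multiply both factors and gather the remainders via Lemma~\ref{LemaPropFlat}(c),(e),(g), obtaining
\[D_2\circ D_1(s;\mu)=s^{\lambda_1\lambda_2}\bigl((\Delta_{00}^1)^{\lambda_2}\Delta_{00}^2+\mathcal{F}^\infty_{\tilde\ell}(\mu_0)\bigr),\qquad \tilde\ell:=\min\{\ell_1,\ell_2 L_1\},\]
which is exactly the claimed form with $\Upsilon_0=(\Delta_{00}^1)^{\lambda_2}\Delta_{00}^2$. Given any target $\ell\in(0,\min\{1,\lambda_1^0,\lambda_1^0\lambda_2^0\})$, choosing $\ell_1$, $\ell_2$, $L_1$ sufficiently close to $\min\{1,\lambda_1^0\}$, $\min\{1,\lambda_2^0\}$, $\lambda_1^0$ respectively forces $\tilde\ell>\ell$, and Lemma~\ref{LemaPropFlat}(d) then promotes the conclusion to $\mathcal{F}^\infty_\ell(\mu_0)$. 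The only real obstacle in this plan is the verification that $D_1\in\mathcal{F}^\infty_{L_1}(\mu_0)$ for every $L_1<\lambda_1^0$; this reduces to checking that $s^{\lambda_1(\mu)}$ with smoothly varying exponent is $(L_1,\infty)$-flat at $\mu_0$, the logarithmic factors produced by $\mu$-derivatives being absorbed by the strict inequality $L_1<\lambda_1^0$.
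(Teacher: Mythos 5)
Your proposal is correct and follows essentially the same route as the paper's proof: substitute the leading-order expansions~\eqref{16}, expand the power factor via the Generalized Binomial Theorem argument of Lemma~\ref{Lemma1} with $a=0$, handle the inner composition with Lemma~\ref{LemaPropFlat}(h), and gather remainders with Lemma~\ref{LemaPropFlat}(d,e,g) before letting the intermediate exponents approach their suprema. If anything, you are slightly more careful than the paper at the composition step, working with $L_1<\lambda_1^0$ and noting explicitly that the logarithmic factors from $\mu$-derivatives of $s^{\lambda_1(\mu)}$ are absorbed by the strict inequality, whereas the paper writes $\ell_3=\lambda_1^0\ell_2$ directly; the openness of the final interval makes both versions land in the same place.
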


\begin{proof}
    Let $\ell\in(0,\min\{1,\lambda_1^0,\lambda_1^0\lambda_2^0\})$. From Theorem~\ref{Teo:Dulacmap} we have (even if $\lambda_1^0=1$ or $\lambda_2^0=1$) that
       \[D_1(s;\mu)=s^{\lambda_1}\bigl(\Delta_{00}^1+\mathcal{F}^\infty_{\ell_1}(\mu_0)\bigr), \quad D_2(s;\mu)=s^{\lambda_2}\bigl(\Delta_{00}^2+\mathcal{F}^\infty_{\ell_2}(\mu_0)\bigr),\]
    for any given $\ell_1\in(0,\min\{1,\lambda_1^0\})$ and $\ell_2\in(0,\min\{1,\lambda_2^0\})$. Observe that
    \[\begin{array}{l}
        D_2\circ D_1(s;\mu) = D_2\bigl(s^{\lambda_1}\bigl(\Delta_{00}^1+\mathcal{F}^\infty_{\ell_1}(\mu_0)\bigr)\bigr) \vspace{0.2cm} \\
               
        \quad=s^{\lambda_1\lambda_2}\bigl(\Delta_{00}^1+\mathcal{F}^\infty_{\ell_1}(\mu_0)\bigr)^{\lambda_2}\bigl(\Delta_{00}^2+\mathcal{F}^\infty_{\ell_3}(\mu_0)\bigr) \vspace{0.2cm} \\
               
        \qquad= s^{\lambda_1\lambda_2}\bigl((\Delta_{00}^1)^{\lambda_2}+\mathcal{F}^\infty_{\ell_1}(\mu_0)\bigr)\bigl(\Delta_{00}^2+\mathcal{F}^\infty_{\ell_3}(\mu_0)\bigr) \vspace{0.2cm} \\
               
        \qquad\quad= s^{\lambda_1\lambda_2}\bigl((\Delta_{00}^1)^{\lambda_2}\Delta_{00}^2+\Delta_{00}^2\mathcal{F}^\infty_{\ell_1}(\mu_0)+(\Delta_{00}^1)^{\lambda_2}\mathcal{F}^\infty_{\ell_3}(\mu_0)+\mathcal{F}^\infty_{\ell_1}(\mu_0)\mathcal{F}^\infty_{\ell_3}(\mu_0)\bigr) \vspace{0.2cm} \\\

        \qquad\qquad= s^{\lambda_1\lambda_2}\bigl((\Delta_{00}^1)^{\lambda_2}\Delta_{00}^2+\mathcal{F}^\infty_{\ell_1}(\mu_0)+\underbrace{\mathcal{F}^\infty_{\ell_3}(\mu_0)+\mathcal{F}^\infty_{\ell_1+\ell_3}(\mu_0)}_{\mathcal{F}^\infty_{\ell_3}(\mu_0)}\bigr) \vspace{0.2cm} \\

        \qquad\qquad\quad= s^{\lambda_1\lambda_2}\bigl((\Delta_{00}^1)^{\lambda_2}\Delta_{00}^2+\mathcal{F}^\infty_{\ell_1}(\mu_0)+\mathcal{F}^\infty_{\ell_3}(\mu_0)\bigr),
    \end{array}\]
    with $\ell_3=\lambda_1^0\ell_2$ following from Lemma~\ref{LemaPropFlat}(h), the third equality following from Lemma~\ref{Lemma1} (with $a=0$), the fifth equality following from Lemma~\ref{LemaPropFlat}(g) and the last equality following from Lemma~\ref{LemaPropFlat}(d). It now follows from Lemma~\ref{LemaPropFlat}(d,e) that
       \[D_2\circ D_1(s,\mu)=s^{\lambda_1\lambda_2}\bigl((\Delta_{00}^1)^{\lambda_2}\Delta_{00}^2+\mathcal{F}^\infty_{\ell_4}(\mu_0)\bigr),\]
    for any given $\ell_4\in(0,\min\{\ell_1,\ell_3\})$. Since we can choose from the beginning any $\ell_2\in(0,\min\{1,\lambda_2^0\})$, it follows that we can take any $\ell_3\in(0,\min\{\lambda_1^0,\lambda_1^0\lambda_2^0\})$. This in addition with the fact that we can choose $\ell_1\in(0,\min\{1,\lambda_1^0\})$ freely implies that we can also choose $\ell_4\in(0,\min\{1,\lambda_1^0,\lambda_1^0\lambda_2^0\})$ freely. In particular, we can take $\ell_4=\ell$.
\end{proof}

In the next result we apply induction on Lemma~\ref{Lemma2} to obtain a general formula for the composition of $n$ Dulac maps. To this end, we recall that
    \[A_{j,k}=\prod_{i=j}^{k}(\Delta_{00}^{i})^{\Lambda_{i,k}}, \quad \Lambda_{i,k}=\prod_{j=i+1}^{k}\lambda_j, \quad \Lambda_{kk}=1, \quad \Lambda_{i,k}^0=\Lambda_{i,k}(\mu_0).\]
\begin{corol}\label{Coro0}
    Let $\{X_\mu\}_{\mu\in\Lambda}$ be a smooth family of planar smooth vector fields having a persistent polycycle $\Gamma$ with hyperbolic saddles $p_1,\dots,p_n$ and consider $\mu_0\in\Lambda$. Then for any given $\ell\in(0,\min\{\Lambda_{0,i}^0\colon i\in\{0,\dots,n\}\})$ it holds
    \begin{equation}\label{21}
        D_n\circ\ldots\circ D_1(s;\mu)=s^{\Lambda_{0,n}}\bigl(A_{1,n}+\mathcal{F}^\infty_\ell(\mu_0)\bigr).
    \end{equation}
\end{corol}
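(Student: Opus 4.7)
The plan is to prove \eqref{21} by induction on $n$, with Lemma~\ref{Lemma2} serving as the template for the inductive step. The base case $n=1$ is immediate from the Dulac map expansion in Theorem~\ref{Teo:Dulacmap}: since $\Lambda_{1,1}=1$, $\Lambda_{0,1}=\lambda_1$, and $A_{1,1}=\Delta_{00}^1$, the formula $D_1(s;\mu)=s^{\lambda_1}(\Delta_{00}^1+\mathcal{F}^\infty_\ell(\mu_0))$ reads exactly as \eqref{21} for any $\ell\in(0,\min\{1,\lambda_1^0\})=(0,\min\{\Lambda_{0,0}^0,\Lambda_{0,1}^0\})$, using that $\Lambda_{0,0}=1$ by convention. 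One could instead start from $n=2$, which is Lemma~\ref{Lemma2}.

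For the inductive step, assume the formula holds for $n-1$ and write $F(s;\mu):=D_{n-1}\circ\cdots\circ D_1(s;\mu)=s^{\Lambda_{0,n-1}}(A_{1,n-1}+\mathcal{F}^\infty_{\tilde\ell}(\mu_0))$. I would substitute $F$ into $D_n(u;\mu)=u^{\lambda_n}(\Delta_{00}^n+\mathcal{F}^\infty_{\ell_n}(\mu_0))$ and repeat the computation from Lemma~\ref{Lemma2}: pull $s^{\Lambda_{0,n-1}\lambda_n}=s^{\Lambda_{0,n}}$ out of $F^{\lambda_n}$; apply Lemma~\ref{Lemma1} (with $a=0$) to obtain $(A_{1,n-1}+\mathcal{F}^\infty_{\tilde\ell}(\mu_0))^{\lambda_n}=A_{1,n-1}^{\lambda_n}+\mathcal{F}^\infty_{\tilde\ell}(\mu_0)$; control the substituted remainder of $D_n$ via Lemma~\ref{LemaPropFlat}(h), placing it in $\mathcal{F}^\infty_{\ell_n\Lambda_{0,n-1}^0}(\mu_0)$; and collect all pieces using Lemma~\ref{LemaPropFlat}(e,g,d). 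The new leading coefficient is $A_{1,n-1}^{\lambda_n}\Delta_{00}^n$, which equals $A_{1,n}$ by the identities $\Lambda_{i,n-1}\lambda_n=\Lambda_{i,n}$ for $i\leqslant n-1$ and $\Lambda_{n,n}=1$, giving $A_{1,n}=\Delta_{00}^n\prod_{i=1}^{n-1}(\Delta_{00}^i)^{\Lambda_{i,n}}=\Delta_{00}^n(A_{1,n-1})^{\lambda_n}$.

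The only subtle point is verifying that the final flatness exponent $\ell$ can be any value in $(0,\min\{\Lambda_{0,i}^0\colon 0\leqslant i\leqslant n\})$. After multiplication, the remainder lies in $\mathcal{F}^\infty_{\tilde\ell}(\mu_0)+\mathcal{F}^\infty_{\ell_n\Lambda_{0,n-1}^0}(\mu_0)$. By the inductive hypothesis $\tilde\ell$ may be chosen anywhere in $(0,\min\{\Lambda_{0,i}^0\colon 0\leqslant i\leqslant n-1\})$, while $\ell_n\in(0,\min\{1,\lambda_n^0\})$, so the product $\ell_n\Lambda_{0,n-1}^0$ sweeps $(0,\min\{\Lambda_{0,n-1}^0,\Lambda_{0,n}^0\})$. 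Combining via Lemma~\ref{LemaPropFlat}(d,e) yields any $\ell\in(0,\min\{\Lambda_{0,i}^0\colon 0\leqslant i\leqslant n\})$, completing the induction. I expect this bookkeeping of the flatness exponent to be the only step requiring real care; the algebraic identities and the flat-class manipulations are mechanical repetitions of the argument already displayed in the proof of Lemma~\ref{Lemma2}.
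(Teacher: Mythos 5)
Your proof is correct and follows essentially the same route as the paper's: induction on $n$, treating the composite $D_{n-1}\circ\cdots\circ D_1$ as a map of Dulac-type and reapplying the two-map computation of Lemma~\ref{Lemma2}, with the flatness exponent tracked exactly as the paper does. Your explicit verification that $\ell_n\Lambda_{0,n-1}^0$ sweeps $(0,\min\{\Lambda_{0,n-1}^0,\Lambda_{0,n}^0\})$ is a welcome bit of extra care that the paper leaves implicit.
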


\begin{proof}
    For simplicity we write,
        \[D_1(s;\mu)=s^{\lambda_1}\bigl(a_1+\mathcal{F}^\infty_{\ell_1}(\mu_0)\bigr), \quad D_2(s;\mu)=s^{\lambda_2}\bigl(a_2+\mathcal{F}^\infty_{\ell_2}(\mu_0)\bigr).\]
    It follows from Lemma~\ref{Lemma2} that
        \[D_2\circ D_1(s;\mu)=s^{\lambda_1\lambda_2}\bigl(a_1^{\lambda_2}a_2+\mathcal{F}^\infty_{\ell_{1,2}}(\mu_0)\bigr)=s^{\Lambda_{0,2}}(A_{1,2}+\mathcal{F}^\infty_{\ell_{1,2}}(\mu_0)\bigr),\]
    for any given $\ell_{1,2}\in(0,\min\{1,\lambda_1^0,\lambda_2^0\})$. Suppose that
    \begin{equation}\label{20}
        D_{n-1}\circ\dots\circ D_1(s;\mu)=s^{\Lambda_{0,n-1}}\bigl(A_{1,n-1}+\mathcal{F}^\infty_{\ell_{1,n-1}}(\mu_0)\bigr),
    \end{equation}
    and let
        \[D_n(s;\mu)=s^{\lambda_n}(a_n+\mathcal{F}^\infty_{\ell_n}(\mu_0)\bigr),\]
    for any given $\ell_{1,n-1}\in(0,\min\{\Lambda_{0,i}^0\colon i\in\{0,\dots,n-1\}\})$ and $\ell_n\in(0,\min\{1,\lambda_n^0\})$. Since \eqref{20} is also of Dulac-type, from Lemma~\ref{Lemma2} we have that
        \[D_n\circ(D_{n-1}\circ\dots\circ D_1)(s;\mu)=\bigl(A_{1,n-1}^{\lambda_n}a_n+\mathcal{F}^\infty_{\ell_{1,n}}(\mu_0)\bigr)=\bigl(A_{1,n}+\mathcal{F}^\infty_{\ell_{1,n}}(\mu_0)\bigr),\]
    for any given $\ell_{1,n}\in(0,\min\{\Lambda_{0,i}^0\colon i\in\{0,\dots,n\}\})$. The proof now follows by induction.  
\end{proof}

We observe that formulas similar to~\eqref{21} were already obtained in the literature. See~\cite[p.~$726$]{MarVilDulacGeneral} and~\cite[p.~$12$]{QMV}. Nevertheless, as far as we know the explicit interval associated with $\ell$ is a new result.

In the following results we shall include the next term of the Dulac maps in the computation. We recall that from Theorem~\ref{Teo:Dulacmap} it follows that such a term depend on the sign of $\lambda_i^0-1$. Hence, the compositions must be studied in a case-by-case scenario. Moreover, different from the previous results, from now on in this section we shall assume $\lambda_1^0\neq1$ and $\lambda_2^0\neq1$ for simplicity.

\begin{lem}\label{Lemma3}
    Let $\{X_\mu\}_{\mu\in\Lambda}$ be a smooth family of planar smooth vector fields having a persistent polycycle $\Gamma$ with hyperbolic saddles $p_1$ and $p_2$. Let $\mu_0\in\Lambda$ be such that $\lambda_1^0>1$ and $\lambda_2^0>1$. Then for any given $\ell\in(1,\min\{\lambda_1^0,2\})$ it holds
        \[D_2\circ D_1(s;\mu)=s^{\lambda_1\lambda_2}\bigl(\Upsilon_0+\Upsilon_1 s+\mathcal{F}^\infty_\ell(\mu_0)\bigr),\]
    where $\Upsilon_0=(\Delta_{00}^1)^{\lambda_2}\Delta_{00}^2$ and $\Upsilon_1=\lambda_2(\Delta_{00}^1)^{\lambda_2-1}\Delta_{00}^2\Delta_{10}^1$.
\end{lem}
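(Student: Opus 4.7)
The plan is to mirror the proof of Lemma~\ref{Lemma2}, but to start from the refined Dulac expansions of Theorem~\ref{Teo:Dulacmap}(b) (available because both $\lambda_i^0>1$), keeping the first-order term $\Delta_{10}^i s$ instead of absorbing it into the remainder. After fixing any $\ell\in(1,\min\{\lambda_1^0,2\})$, I take $\ell_1=\ell$ for the inner Dulac expansion and any $\ell_2\in(1,\min\{\lambda_2^0,2\})$ for the outer one, and substitute $D_1$ into $D_2$ to obtain
$$D_2\circ D_1(s;\mu)=D_1(s;\mu)^{\lambda_2}\bigl(\Delta_{00}^2+\Delta_{10}^2 D_1(s;\mu)+\mathcal{F}^\infty_{\ell_2}(\mu_0)(D_1(s;\mu))\bigr).$$

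The central algebraic step is to expand the first factor
$$D_1(s;\mu)^{\lambda_2}=s^{\lambda_1\lambda_2}\bigl(\Delta_{00}^1+\Delta_{10}^1 s+\mathcal{F}^\infty_\ell(\mu_0)\bigr)^{\lambda_2}$$
via Lemma~\ref{Lemma1}. Its hypothesis, applied with the parameter $\lambda$ of that lemma equal to the constant $1$, requires $L\in(1,2)$, which is exactly the range forced by $\ell<\min\{\lambda_1^0,2\}\leqslant 2$. The outcome is the first-order expansion
$$D_1(s;\mu)^{\lambda_2}=s^{\lambda_1\lambda_2}\bigl((\Delta_{00}^1)^{\lambda_2}+\lambda_2(\Delta_{00}^1)^{\lambda_2-1}\Delta_{10}^1 s+\mathcal{F}^\infty_\ell(\mu_0)\bigr).$$

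For the second factor, the key point is that its last two summands are in $\mathcal{F}^\infty_\ell(\mu_0)$. Indeed, $\Delta_{10}^2 D_1(s;\mu)$ is of order $s^{\lambda_1}$, and since $\ell<\lambda_1^0$, by continuity $\lambda_1(\mu)>\ell$ in a neighborhood of $\mu_0$, so $s^{\lambda_1}\in\mathcal{F}^\infty_\ell(\mu_0)$. For the composition $\mathcal{F}^\infty_{\ell_2}(\mu_0)(D_1(s;\mu))$, Lemma~\ref{LemaPropFlat}(h) yields flatness at least $L'\ell_2$ for any $L'\in(0,\lambda_1^0)$; choosing $L'$ sufficiently close to $\lambda_1^0$ makes $L'\ell_2>\lambda_1^0>\ell$. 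Hence the second factor reduces to $\Delta_{00}^2+\mathcal{F}^\infty_\ell(\mu_0)$. Multiplying the two factors and absorbing all cross-terms into $\mathcal{F}^\infty_\ell(\mu_0)$ via Lemma~\ref{LemaPropFlat}(e,g) yields the claimed expansion, with $\Upsilon_0=(\Delta_{00}^1)^{\lambda_2}\Delta_{00}^2$ and $\Upsilon_1=\lambda_2(\Delta_{00}^1)^{\lambda_2-1}\Delta_{00}^2\Delta_{10}^1$ arising as the coefficients of $1$ and $s$ respectively.

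The main obstacle is the bookkeeping of the flatness intervals. Three constraints on $\ell$ must hold simultaneously: $\ell>1$, inherited from the refined Dulac expansion and needed so that the $s$-coefficient is a genuine leading term of the remainder; $\ell<\lambda_1^0$, needed to absorb the $O(s^{\lambda_1})$ contributions from the outer Dulac map; and $\ell<2$, which is both the upper bound in Theorem~\ref{Teo:Dulacmap}(b) for the inner expansion and the hypothesis of Lemma~\ref{Lemma1}. All three are packaged into $\ell\in(1,\min\{\lambda_1^0,2\})$. A minor concomitant check is that $\Delta_{10}^1$ is smooth in a neighborhood of $\mu_0$, which holds because $\lambda_1^0>1$ keeps $\lambda_1(\mu)$ away from the pole set $\{1/k\colon k\in\mathbb{N}\}$.
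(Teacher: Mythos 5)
Your proof is correct and follows essentially the same route as the paper's: compose the two refined Dulac expansions from Theorem~\ref{Teo:Dulacmap}(b), expand the power of the inner map via Lemma~\ref{Lemma1}, push the remainder through Lemma~\ref{LemaPropFlat}(h), and absorb the $O(s^{\lambda_1})$ contribution of the outer map into $\mathcal{F}^\infty_\ell(\mu_0)$ using $\ell<\lambda_1^0$. The only (harmless) difference is bookkeeping: you take $\ell_1=\ell$ and discard the $\Delta_{10}^2\Delta_{00}^1 s^{\lambda_1}$ term immediately, whereas the paper keeps $\ell_1>\ell$ and carries that term through one more intermediate step before absorbing it.
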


\begin{proof}
    Let $\ell\in(1,\min\{\lambda_1^0,2\})$. Since $\lambda_1^0>1$ and $\lambda_2^0>1$, from Theorem~\ref{Teo:Dulacmap} we have that
        \[D_1(s;\mu)=s^{\lambda_1}\bigl(\Delta_{00}^1+\Delta_{10}^1s+\mathcal{F}^\infty_{\ell_1}(\mu_0)\bigr), \quad D_2(s;\mu)=s^{\lambda_2}\bigl(\Delta_{00}^2+\Delta_{10}^2s+\mathcal{F}^\infty_{\ell_2}(\mu_0)\bigr),\]
    for any given $\ell_1\in(1,\min\{\lambda_1^0,2\})$ and $\ell_2\in(1,\min\{\lambda_2^0,2\})$. In particular, for any given $\ell_1\in(\ell,\min\{\lambda_1^0,2\})$. Observe that
        \begin{equation}\label{2}
            \begin{array}{l}
               D_2\circ D_1(s;\mu) = D_2\bigl(s^{\lambda_1}\bigl(\Delta_{00}^1+\Delta_{10}^1+\mathcal{F}^\infty_{\ell_1}(\mu_0)\bigr)\bigr) \vspace{0.2cm} \\
               
               \quad=s^{\lambda_1\lambda_2}\bigl(\Delta_{00}^1+\Delta_{10}^1s+\mathcal{F}^\infty_{\ell_1}(\mu_0)\bigr)^{\lambda_2}\bigl(\Delta_{00}^2+\Delta_{10}^2s^{\lambda_1}\bigl(\Delta_{00}^1+\Delta_{10}^1s+\mathcal{F}^\infty_{\ell_1}(\mu_0)\bigr)+\mathcal{F}^\infty_{\ell_3}(\mu_0)\bigr),
            \end{array}
        \end{equation}
    with $\ell_3=\lambda_1^0\ell_2$ following from Lemma~\ref{LemaPropFlat}(h). Observe that $\ell_3>\lambda_1^0$. Applying Lemma~\ref{Lemma1} at \eqref{2} we obtain
        \begin{equation}\label{3}                                              
            \begin{array}{l}    
                s^{\lambda_1\lambda_2}\bigl((\Delta_{00}^1)^{\lambda_2}+\lambda_2(\Delta_{00}^1)^{\lambda_2-1}\Delta_{10}^1s+\mathcal{F}^\infty_{\ell_1}(\mu_0)\bigr)\cdot \vspace{0.2cm} \\
                \quad\cdot\bigl(\Delta_{00}^2+\Delta_{10}^2\Delta_{00}^1s^{\lambda_1}+\underbrace{\Delta_{10}^2\Delta_{10}^1s^{\lambda_1+1}+\Delta_{10}^2s^{\lambda_1}\mathcal{F}^\infty_{\ell_1}(\mu_0)+\mathcal{F}^\infty_{\ell_3}(\mu_0)}_{\mathcal{F}^\infty_{\ell_4}(\mu_0)}\bigr) \vspace{0.2cm} \\

                \qquad=s^{\lambda_1\lambda_2}\bigl((\Delta_{00}^1)^{\lambda_2}+\lambda_2(\Delta_{00}^1)^{\lambda_2-1}\Delta_{10}^1s+\mathcal{F}^\infty_{\ell_1}(\mu_0)\bigr)\bigl(\Delta_{00}^2+\Delta_{10}^2\Delta_{00}^1s^{\lambda_1}+\mathcal{F}^\infty_{\ell_4}(\mu_0)\bigr),
            \end{array}
        \end{equation}
    for any given $\ell_4\in(\lambda_1^0,\min\{1+\lambda_1^0,\ell_1+\lambda_1^0,\ell_3\})$, due to Lemma~\ref{LemaPropFlat}(d,g). Expanding the last two factors of \eqref{3} we obtain
            \[s^{\lambda_1\lambda_2}\bigl(\Delta_{00}^2(\Delta_{00}^1)^{\lambda_2}+\lambda_2\Delta_{00}^2(\Delta_{00}^1)^{\lambda_2-1}\Delta_{10}^1s+\mathcal{F}^\infty_{\ell_5}(\mu_0)\bigr),\]
    for any given 
            \[\ell_5\in(1,\min\{\lambda_1^0,\ell_4,1+\lambda_1^0,1+\ell_4,\ell_1,\ell_1+\lambda_1^0,\ell_1+\ell_4\})=(1,\min\{\lambda_1^0,\ell_1,\ell_4\})=(1,\ell_1).\]
    In particular for $\ell_5=\ell$.
\end{proof}

\begin{corol}\label{Coro1}
    Let $\{X_\mu\}_{\mu\in\Lambda}$ be a smooth family of planar smooth vector fields having a persistent polycycle $\Gamma$ with hyperbolic saddles $p_1,\dots,p_n$. Let $\mu_0\in\Lambda$ be such that $\lambda_i^0>1$ for $i\in\{1,\dots,m\}$. Then for any given $\ell\in(1,\min\{\lambda_1^0,2\})$ it holds
        \[D_n\circ\ldots\circ D_1(s;\mu)=s^{\Lambda_{0,n}}\bigl(A_{1,n}+B_{1,n}s+\mathcal{F}^\infty_\ell(\mu_0)\bigr),\]
    where 
        \[B_{j,k}=\Lambda_{j,k}\frac{\Delta_{10}^j}{\Delta_{00}^j}A_{j,k}, \quad A_{j,k}=\prod_{i=j}^{k}(\Delta_{00}^{i})^{\Lambda_{i,k}}, \quad \Lambda_{i,k}=\prod_{j=i+1}^{k}\lambda_j,\;\Lambda_{kk}=1.\]
\end{corol}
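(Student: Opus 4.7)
The strategy is induction on the number of saddles $n$, mirroring the proof of Corollary~\ref{Coro0} but with Lemma~\ref{Lemma3} playing the role of Lemma~\ref{Lemma2}. The base case $n=2$ is precisely Lemma~\ref{Lemma3} once we match notation: one checks that $\Upsilon_0=(\Delta_{00}^1)^{\lambda_2}\Delta_{00}^2=A_{1,2}$ and $\Upsilon_1=\lambda_2(\Delta_{00}^1)^{\lambda_2-1}\Delta_{00}^2\Delta_{10}^1=B_{1,2}$, using $\Lambda_{1,2}=\lambda_2$, $\Lambda_{2,2}=1$, and the definitions in~\eqref{eq:formulasAlambda}.

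For the inductive step, I assume
\begin{equation*}
D_{n-1}\circ\cdots\circ D_1(s;\mu)=s^{\Lambda_{0,n-1}}\bigl(A_{1,n-1}+B_{1,n-1}s+\mathcal{F}^\infty_{\ell_{1,n-1}}(\mu_0)\bigr),
\end{equation*}
for any $\ell_{1,n-1}\in(1,\min\{\lambda_1^0,2\})$. This expression has the same Dulac-type shape as the input $D_1$ in the proof of Lemma~\ref{Lemma3}, with $\lambda_1$ replaced by $\Lambda_{0,n-1}$. I then substitute it into the expansion of $D_n$ given by Theorem~\ref{Teo:Dulacmap}(b) and reproduce the computation of Lemma~\ref{Lemma3} almost verbatim: raise the inner expression to the $\lambda_n$-th power via Lemma~\ref{Lemma1} (applied with $b=A_{1,n-1}$, $a=B_{1,n-1}$, $\lambda=1$, $\eta=\lambda_n$ and $L=\ell_{1,n-1}\in(1,2)$) and use Lemma~\ref{LemaPropFlat} to combine the remainders.

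Matching coefficients, the leading term inside the parenthesis is $A_{1,n-1}^{\lambda_n}\Delta_{00}^n=A_{1,n}$, while the $s$-coefficient is
\begin{equation*}
\lambda_n A_{1,n-1}^{\lambda_n-1}B_{1,n-1}\Delta_{00}^n=\lambda_n\Lambda_{1,n-1}\frac{\Delta_{10}^1}{\Delta_{00}^1}A_{1,n-1}^{\lambda_n}\Delta_{00}^n=\Lambda_{1,n}\frac{\Delta_{10}^1}{\Delta_{00}^1}A_{1,n}=B_{1,n},
\end{equation*}
after substituting $B_{1,n-1}=\Lambda_{1,n-1}(\Delta_{10}^1/\Delta_{00}^1)A_{1,n-1}$ and using $\Lambda_{1,n}=\Lambda_{1,n-1}\lambda_n$ together with $A_{1,n}=A_{1,n-1}^{\lambda_n}\Delta_{00}^n$.

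The main obstacle is the bookkeeping of flatness exponents needed to guarantee that the composite remainder lies in $\mathcal{F}^\infty_\ell(\mu_0)$ for any $\ell\in(1,\min\{\lambda_1^0,2\})$. The hypothesis $\lambda_i^0>1$ for all $i$ gives $\Lambda_{0,n-1}^0\geqslant\lambda_1^0>\ell$, so the contribution $\Delta_{10}^n\cdot(D_{n-1}\circ\cdots\circ D_1)$ has leading behavior of order $s^{\Lambda_{0,n-1}}$ and therefore belongs to $\mathcal{F}^\infty_\ell(\mu_0)$ after mildly shrinking the neighborhood of $\mu_0$. Similarly, composing $\mathcal{F}^\infty_{\ell_n}(\mu_0)$ from $D_n$ with the inner map yields flatness $\Lambda_{0,n-1}^0\ell_n>\lambda_1^0>\ell$ by Lemma~\ref{LemaPropFlat}(h). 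Only the inherited remainder $\mathcal{F}^\infty_{\ell_{1,n-1}}$ from the inductive hypothesis constrains the final bound, and choosing $\ell_{1,n-1}=\ell$ from the outset is enough to close the induction.
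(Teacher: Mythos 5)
Your proposal is correct and follows essentially the same route as the paper: induction on the number of saddles, with Lemma~\ref{Lemma3} (the two-saddle composition) serving both as the base case and, applied to the Dulac-type inductive hypothesis, as the engine of the inductive step; your coefficient identities $A_{1,n}=A_{1,n-1}^{\lambda_n}\Delta_{00}^n$ and $B_{1,n}=\lambda_n A_{1,n-1}^{\lambda_n-1}B_{1,n-1}\Delta_{00}^n$ match the paper's computation exactly. Your bookkeeping of the flatness exponents (using $\Lambda_{0,n-1}^0\geqslant\lambda_1^0>\ell$ and taking $\ell_{1,n-1}=\ell$) is in fact slightly more careful than the paper's, which simply carries $\ell_1$ throughout.
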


\begin{proof}
    It follows from Lemma~\ref{Lemma3} that in this case the composition of Dulac maps is also of Dulac-type. Therefore the proof follows by induction. More precisely if for simplicity we write 
        \[D_1(s;\mu)=s^{\lambda_1}\bigl(a_1+b_1s+\mathcal{F}^\infty_{\ell_1}(\mu_0)\bigr), \quad D_2(s;\mu)=s^{\lambda_2}\bigl(a_2+b_2s+\mathcal{F}^\infty_{\ell_2}(\mu_0)\bigr),\]
    then it follows from Lemma~\ref{Lemma3} that,
        \[D_2\circ D_1(s;\mu)=s^{\lambda_1\lambda_2}\left(a_1^{\lambda_2}a_2+\lambda_2\frac{b_1}{a_1}\bigl(a_1^{\lambda_2}a_2\bigr)s+\mathcal{F}^\infty_{\ell_1}(\mu_0)\right)=s^{\Lambda_{0,2}}\bigl(A_{1,2}+B_{1,2}s+\mathcal{F}^\infty_{\ell_1}(\mu_0)\bigr).\]
    Suppose therefore that
        \[D_{n-1}\circ\ldots\circ D_1(s;\mu)=s^{\Lambda_{0,n-1}}\bigl(A_{1,n-1}+B_{1,n-1}s+\mathcal{F}^\infty_{\ell_1}(\mu_0)\bigr),\]
    and let
        \[D_n(s;\mu)=s^{\lambda_n}\bigl(a_n+b_ns+\mathcal{F}^\infty_{\ell_n}(\mu_0)\bigr).\]
    From Lemma~\ref{Lemma3} we have that
        \[\begin{array}{l}
            \displaystyle D_n\circ(D_{n-1}\circ\ldots\circ D_1)(s;\mu) \vspace{0.2cm} \\
            
            \displaystyle\quad=s^{\Lambda_{0,n-1}\lambda_n}\left(A_{1,n-1}^{\lambda_n}a_n+\lambda_n\frac{B_{1,n-1}}{A_{1,n-1}}\bigl(A_{1,n-1}^{\lambda_n}a_n\bigr)s+\mathcal{F}^\infty_{\ell_1}(\mu_0)\right) \vspace{0.2cm} \\

            \displaystyle \qquad=s^{\Lambda_{0,n}}\left(A_{1,n}+\lambda_n\Lambda_{1,n-1}\frac{b_1}{a_1}A_{1,n}s+\mathcal{F}^\infty_{\ell_1}(\mu_0)\right) \vspace{0.2cm} \\
            
            \displaystyle\qquad\quad=s^{\Lambda_{0,n}}\bigl(A_{1,n}+B_{1,n}s+\mathcal{F}^\infty_{\ell_1}(\mu_0)\bigr),
        \end{array}\]
    proving the result.
\end{proof}

\begin{lem}\label{Lemma4}
    Let $\{X_\mu\}_{\mu\in\Lambda}$ be a smooth family of planar smooth vector fields having a persistent polycycle $\Gamma$ with hyperbolic saddles $p_1$ and $p_2$. Let $\mu_0\in\Lambda$ be such that $\lambda_1^0<1$ and $\lambda_2^0<1$. Then for any given $\ell\in(\lambda_1^0\lambda_2^0,\min\{\lambda_1^0,2\lambda_1^0\lambda_2^0\})$ it holds
        \[D_2\circ D_1(s;\mu)=s^{\lambda_1\lambda_2}\bigl(\Upsilon_0+\Upsilon_2 s^{\lambda_1\lambda_2}+\mathcal{F}^\infty_\ell(\mu_0)\bigr),\]
    where $\Upsilon_0=(\Delta_{00}^1)^{\lambda_2}\Delta_{00}^2$ and $\Upsilon_2=(\Delta_{00}^1)^{2\lambda_2}\Delta_{01}^2$.
\end{lem}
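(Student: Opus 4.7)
My plan is to mirror the structure of the proof of Lemma~\ref{Lemma3}, replacing Theorem~\ref{Teo:Dulacmap}(b) with part (a) since now $\lambda_1^0,\lambda_2^0<1$. Concretely, for $i\in\{1,2\}$ I will start from
\[D_i(s;\mu)=s^{\lambda_i}\bigl(\Delta_{00}^i+\Delta_{01}^is^{\lambda_i}+\mathcal{F}^\infty_{\ell_i}(\mu_0)\bigr),\]
for arbitrary $\ell_i\in(\lambda_i^0,\min\{2\lambda_i^0,1\})$, and substitute $D_1$ into $D_2$. The structural difference with Lemma~\ref{Lemma3} is that the sub-leading correction is now of the form $\Delta_{01}^i s^{\lambda_i}$ rather than $\Delta_{10}^i s$, so the extra power $s^{\lambda_i}$ propagates through the composition and yields a sub-leading term of order $s^{2\lambda_1\lambda_2}$, i.e.\ $\Upsilon_2 s^{\lambda_1\lambda_2}$ after pulling out the outer $s^{\lambda_1\lambda_2}$.

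The composition $D_2(D_1)=D_1^{\lambda_2}\bigl(\Delta_{00}^2+\Delta_{01}^2 D_1^{\lambda_2}+\mathcal{F}^\infty_{\ell_2}(D_1)\bigr)$ is then manipulated with three tools. First, Lemma~\ref{LemaPropFlat}(h) applied to $\mathcal{F}^\infty_{\ell_2}(D_1(s;\mu))$ produces a term in $\mathcal{F}^\infty_{\lambda_1^0\ell_2}(\mu_0)$. Second, Lemma~\ref{Lemma1} with $b=\Delta_{00}^1$, $a=\Delta_{01}^1$, $\lambda=\lambda_1$, and $\eta\in\{\lambda_2,2\lambda_2\}$ expands
\[D_1(s;\mu)^{\eta}=s^{\lambda_1\eta}\bigl((\Delta_{00}^1)^{\eta}+\eta(\Delta_{00}^1)^{\eta-1}\Delta_{01}^1\,s^{\lambda_1}+\mathcal{F}^\infty_{\ell_1}(\mu_0)\bigr),\]
whose hypothesis $\ell_1\in(\lambda_1^0,2\lambda_1^0)$ is guaranteed by our choice. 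Third, Lemma~\ref{LemaPropFlat}(d,e,g) collects all lower-order remainders into a single $\mathcal{F}^\infty_\ell(\mu_0)$. After expanding the products, only two non-flat contributions survive: $(\Delta_{00}^1)^{\lambda_2}\Delta_{00}^2\cdot s^{\lambda_1\lambda_2}=\Upsilon_0\,s^{\lambda_1\lambda_2}$ and $(\Delta_{00}^1)^{2\lambda_2}\Delta_{01}^2\cdot s^{2\lambda_1\lambda_2}=\Upsilon_2\,s^{\lambda_1\lambda_2}\cdot s^{\lambda_1\lambda_2}$; every other contribution (the cross term $\lambda_2\Delta_{00}^2(\Delta_{00}^1)^{\lambda_2-1}\Delta_{01}^1\,s^{\lambda_1}$, the next-order correction of $D_1^{2\lambda_2}$, and the composed flat remainder) is absorbed into $\mathcal{F}^\infty_\ell(\mu_0)$.

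The main obstacle is purely the bookkeeping of flat orders: I must verify that for any prescribed $\ell$ in the target interval $(\lambda_1^0\lambda_2^0,\min\{\lambda_1^0,2\lambda_1^0\lambda_2^0\})$ one can choose $\ell_1,\ell_2$ close enough to the upper ends of their own admissible ranges so that each absorbed residue lies in $\mathcal{F}^\infty_\ell(\mu_0)$. The lower bound $\ell>\lambda_1^0\lambda_2^0$ is exactly the condition that prevents $\Upsilon_2\,s^{\lambda_1\lambda_2}$ from being swallowed by the remainder once the outer $s^{\lambda_1\lambda_2}$ is factored out; the bound $\ell<\lambda_1^0$ allows absorption of the $s^{\lambda_1}$ cross term; and the bound $\ell<2\lambda_1^0\lambda_2^0$ comes from the composed flat remainder $\mathcal{F}^\infty_{\lambda_1^0\ell_2}$, whose order $\lambda_1^0\ell_2$ can be made arbitrarily close to, but not equal to, $2\lambda_1^0\lambda_2^0$ by choosing $\ell_2$ near the upper end of $(\lambda_2^0,\min\{2\lambda_2^0,1\})$.
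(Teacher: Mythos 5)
Your proposal is correct and follows essentially the same route as the paper: write both Dulac maps via Theorem~\ref{Teo:Dulacmap}(a), compose, use Lemma~\ref{LemaPropFlat}(h) to turn $\mathcal{F}^\infty_{\ell_2}(D_1)$ into $\mathcal{F}^\infty_{\lambda_1^0\ell_2}(\mu_0)$, expand the powers of $D_1$ with Lemma~\ref{Lemma1}, and absorb the $s^{\lambda_1}$ cross term and all flatter residues using $\ell<\lambda_1^0$ and $\ell<2\lambda_1^0\lambda_2^0$ (your grouping of $D_1^{\lambda_2}\cdot\Delta_{01}^2D_1^{\lambda_2}$ as $\Delta_{01}^2D_1^{2\lambda_2}$ is only a cosmetic variation of the paper's expansion of the product of the two factors). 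The one minor imprecision is attributing the upper bound solely to $2\lambda_1^0\lambda_2^0$: the composed remainder order $\lambda_1^0\ell_2$ is actually capped by $\lambda_1^0\min\{2\lambda_2^0,1\}=\min\{2\lambda_1^0\lambda_2^0,\lambda_1^0\}$, but since the $\ell<\lambda_1^0$ constraint is imposed anyway, the resulting interval is the correct one.
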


\begin{proof}
    Let $\ell\in(\lambda_1^0\lambda_2^0,\min\{\lambda_1^0,2\lambda_1^0\lambda_2^0\})$. Since $\lambda_1^0<1$ and $\lambda_2^0<1$, from Theorem~\ref{Teo:Dulacmap} we have that
        \[D_1(s;\mu)=s^{\lambda_1}\bigl(\Delta_{00}^1+\Delta_{01}^1s^{\lambda_1}+\mathcal{F}^\infty_{\ell_1}(\mu_0)\bigr), \quad D_2(s;\mu)=s^{\lambda_2}\bigl(\Delta_{00}^2+\Delta_{01}^2s^{\lambda_2}+\mathcal{F}^\infty_{\ell_2}(\mu_0)\big),\]
    for any given $\ell_1\in(\lambda_1^0,\min\{2\lambda_1^0,1\})$ and $\ell_2\in(\lambda_2^0,\min\{2\lambda_2^0,1\})$. Similarly to Lemma~\ref{Lemma4} observe that
        \[\begin{array}{l}
            D_2\circ D_1(s;\mu)=s^{\lambda_1\lambda_2}\bigl(\Delta_{00}^1+\Delta_{01}^1s^{\lambda_1}+\mathcal{F}^\infty_{\ell_1}(\mu_0)\bigr)^{\lambda_2}\cdot \vspace{0.2cm} \\
            
            \quad\cdot\bigl(\Delta_{00}^2+\Delta_{01}^2s^{\lambda_1\lambda_2}\bigl(\Delta_{00}^1+\Delta_{01}^1s^{\lambda_1}+\mathcal{F}^\infty_{\ell_1}(\mu_0)\bigr)^{\lambda_2}+\mathcal{F}^\infty_{\ell_3}(\mu_0)\bigr) \vspace{1cm} \\

            \qquad=s^{\lambda_1\lambda_2}\bigl((\Delta_{00}^1)^{\lambda_2}+\lambda_2(\Delta_{00}^1)^{\lambda_2-1}\Delta_{01}^1s^{\lambda_1}+\mathcal{F}^\infty_{\ell_1}(\mu_0)\bigr)\cdot \vspace{0.2cm} \\
            \qquad\quad\cdot\bigl(\Delta_{00}^2+\Delta_{01}^2s^{\lambda_1\lambda_2}\bigl((\Delta_{00}^1)^{\lambda_2}+\lambda_2(\Delta_{00}^1)^{\lambda_2-1}\Delta_{01}^1s^{\lambda_1}+\mathcal{F}^\infty_{\ell_1}(\mu_0)\bigr)+\mathcal{F}^\infty_{\ell_3}(\mu_0)\bigr) \vspace{1cm} \\

            \qquad\qquad=s^{\lambda_1\lambda_2}\bigl((\Delta_{00}^1)^{\lambda_2}+\lambda_2(\Delta_{00}^1)^{\lambda_2-1}\Delta_{01}^1s^{\lambda_1}+\mathcal{F}^\infty_{\ell_1}(\mu_0)\bigr) \vspace{0.2cm} \\
            
            \qquad\qquad\quad\cdot\bigl(\Delta_{00}^2+(\Delta_{00}^1)^{\lambda_2}\Delta_{01}^2s^{\lambda_1\lambda_2}+\mathcal{F}^\infty_{\ell_4}(\mu_0)\bigr) \vspace{1cm} \\

            \qquad\qquad\qquad=s^{\lambda_1\lambda_2}\bigl(\Delta_{00}^2(\Delta_{00}^1)^{\lambda_2}+(\Delta_{00}^1)^{2\lambda_2}\Delta_{01}^2s^{\lambda_1\lambda_2}+\mathcal{F}^\infty_{\ell_5}(\mu_0)\bigr),
        \end{array}\]
    with $\ell_3=\lambda_1^0\ell_2$ and for any given $\ell_4\in(\lambda_1^0\lambda_2^0,\min\{\lambda_1^0\lambda_2^0+\lambda_1^0,\ell_3\})$ and $\ell_5\in(\lambda_1^0\lambda_2^0,\min\{\lambda_1^0,\ell_4\})$. Observe that the possibility to take any $\ell_2\in(\lambda_2^0,\min\{2\lambda_2^0,1\})$ implies that we can take any $\ell_3\in(\lambda_1^0\lambda_2^0,\min\{2\lambda_1^0\lambda_2^0,\lambda_1^0\})$, which in turn implies that we can take any
        \[\ell_4\in(\lambda_1^0\lambda_2^0,\min\{\lambda_1^0\lambda_2^0+\lambda_1^0,2\lambda_1^0\lambda_2^0,\lambda_1^0\})=(\lambda_1^0\lambda_2^0,\min\{2\lambda_1^0\lambda_2^0,\lambda_1^0\}).\]
    This in turn implies that we can take any $\ell_5\in(\lambda_1^0\lambda_2^0,\min\{\lambda_1^0,2\lambda_1^0\lambda_2^0\})$. In particular, we can take $\ell_5=\ell$.
\end{proof}

\begin{corol}\label{Coro2}
    Let $\{X_\mu\}_{\mu\in\Lambda}$ be a smooth family of planar smooth vector fields having a persistent polycycle $\Gamma$ with hyperbolic saddles $p_1,\dots,p_n$. Let $\mu_0\in\Lambda$ be such that $\lambda_i^0<1$ for $i\in\{1,\dots,n\}$. Then for any given $\ell\in(\Lambda_{0,n}^0,\min\{\Lambda_{0,n-1}^0,2\Lambda_{0,n}^0\})$ it holds
        \[D_n\circ\ldots\circ D_1(s;\mu)=s^{\Lambda_{0,n}}\bigl(A_{1,n}+C_{1,n}s^{\Lambda_{0,n}}+\mathcal{F}^\infty_\ell(\mu_0)\bigr),\]
    where 
        \[C_{j,k}=A_{j,k-1}^{2\lambda_k}\Delta_{01}^k, \quad A_{j,k}=\prod_{i=j}^{k}(\Delta_{00}^{i})^{\Lambda_{i,k}}, \quad \Lambda_{i,k}=\prod_{j=i+1}^{k}\lambda_j,\;\Lambda_{kk}=1.\]
\end{corol}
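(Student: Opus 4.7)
The plan is to prove the result by induction on $n$, mirroring the structure used in Corollary~\ref{Coro1}. The base case $n=2$ is precisely Lemma~\ref{Lemma4} (with $C_{1,2}=A_{1,1}^{2\lambda_2}\Delta_{01}^2=(\Delta_{00}^1)^{2\lambda_2}\Delta_{01}^2=\Upsilon_2$). For the inductive step, suppose that for some $n\geqslant 3$,
\[
\widetilde{D}(s;\mu):=D_{n-1}\circ\cdots\circ D_1(s;\mu)=s^{\Lambda_{0,n-1}}\bigl(A_{1,n-1}+C_{1,n-1}s^{\Lambda_{0,n-1}}+\mathcal{F}^\infty_{\ell_{n-1}}(\mu_0)\bigr),
\]
for any $\ell_{n-1}\in(\Lambda_{0,n-1}^0,\min\{\Lambda_{0,n-2}^0,2\Lambda_{0,n-1}^0\})$. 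Since $\lambda_n^0<1$, Theorem~\ref{Teo:Dulacmap}(a) gives
\[
D_n(s;\mu)=s^{\lambda_n}\bigl(\Delta_{00}^n+\Delta_{01}^n s^{\lambda_n}+\mathcal{F}^\infty_{\ell_n'}(\mu_0)\bigr),\qquad \ell_n'\in(\lambda_n^0,\min\{2\lambda_n^0,1\}).
\]

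The key observation is that $\widetilde{D}$ has the same algebraic shape as $D_1$ in Lemma~\ref{Lemma4}, with the substitutions $\lambda_1\leftrightarrow \Lambda_{0,n-1}$, $\Delta_{00}^1\leftrightarrow A_{1,n-1}$, $\Delta_{01}^1\leftrightarrow C_{1,n-1}$, and $D_n$ plays the role of $D_2$. So I would re-run the computation from the proof of Lemma~\ref{Lemma4} verbatim. First, apply Lemma~\ref{Lemma1} to $\widetilde{D}(s)^{\lambda_n}$ to obtain
\[
\widetilde{D}(s)^{\lambda_n}=s^{\Lambda_{0,n}}\bigl(A_{1,n-1}^{\lambda_n}+\lambda_n A_{1,n-1}^{\lambda_n-1}C_{1,n-1}s^{\Lambda_{0,n-1}}+\mathcal{F}^\infty_{\ell_{n-1}}(\mu_0)\bigr),
\]
then expand $D_n(\widetilde{D}(s))=\widetilde{D}(s)^{\lambda_n}\bigl(\Delta_{00}^n+\Delta_{01}^n\widetilde{D}(s)^{\lambda_n}+\mathcal{F}^\infty_{\ell_n'}(\mu_0)\bigr)$, using Lemma~\ref{LemaPropFlat}(h) to propagate flatness through the composition $\mathcal{F}^\infty_{\ell_n'}\circ\widetilde{D}\in\mathcal{F}^\infty_{\ell_n'\Lambda_{0,n-1}^0}(\mu_0)$. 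Multiplying out and using Lemma~\ref{LemaPropFlat}(e,g), the leading coefficient is $A_{1,n-1}^{\lambda_n}\Delta_{00}^n=A_{1,n}$, and among the remaining two candidate next-order terms, namely $\lambda_n A_{1,n-1}^{\lambda_n-1}C_{1,n-1}\Delta_{00}^n\, s^{\Lambda_{0,n-1}}$ and $A_{1,n-1}^{2\lambda_n}\Delta_{01}^n\, s^{\Lambda_{0,n}}$, the second is the genuine next order because $\Lambda_{0,n}^0=\lambda_n^0\Lambda_{0,n-1}^0<\Lambda_{0,n-1}^0$. This yields $C_{1,n}=A_{1,n-1}^{2\lambda_n}\Delta_{01}^n$, matching the asserted formula.

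The part that requires care, and which I expect to be the main bookkeeping obstacle, is verifying that the remainder lies in $\mathcal{F}^\infty_\ell(\mu_0)$ for any $\ell\in(\Lambda_{0,n}^0,\min\{\Lambda_{0,n-1}^0,2\Lambda_{0,n}^0\})$. The $s^{\Lambda_{0,n-1}}$ terms (inside the parenthesis, after factoring $s^{\Lambda_{0,n}}$) are absorbed because they belong to $\mathcal{F}^\infty_{\ell}$ for any $\ell<\Lambda_{0,n-1}^0$; the composition $\mathcal{F}^\infty_{\ell_n'}\circ\widetilde{D}$ contributes an exponent $\ell_n'\Lambda_{0,n-1}^0$ which can be made to exceed any prescribed $\ell<2\Lambda_{0,n}^0=2\lambda_n^0\Lambda_{0,n-1}^0$ by choosing $\ell_n'$ close to $2\lambda_n^0$; and the cross products from the two expansions produce terms of strictly larger flatness order (such as $s^{\Lambda_{0,n-1}+\Lambda_{0,n}}$ or $s^{2\Lambda_{0,n-1}}$) which are dominated. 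After tracking these three constraints via Lemma~\ref{LemaPropFlat}(d,e,g,h) as in the proofs of Lemmas~\ref{Lemma2}--\ref{Lemma4}, the admissible range for the inductive flatness exponent $\ell_n$ reduces exactly to $(\Lambda_{0,n}^0,\min\{\Lambda_{0,n-1}^0,2\Lambda_{0,n}^0\})$, closing the induction.
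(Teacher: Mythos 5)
Your proof is correct and follows essentially the same route as the paper: induction on $n$ with Lemma~\ref{Lemma4} as the inductive engine, treating $D_{n-1}\circ\cdots\circ D_1$ as a Dulac-type map playing the role of $D_1$. You spell out the Lemma~\ref{Lemma4} computation and the flatness-exponent bookkeeping in more detail than the paper does, but the argument and the resulting formula $C_{1,n}=A_{1,n-1}^{2\lambda_n}\Delta_{01}^n$ coincide.
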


\begin{proof}
    Similarly to the proof of Corollary~\ref{Coro1}, observe that if for simplicity we write
        \[D_1(s;\mu)=s^{\lambda_1}\bigl(a_1+c_1s^{\lambda_1}+\mathcal{F}^\infty_{\ell_1}(\mu_0)\bigr), \quad D_2(s;\mu)=s^{\lambda_2}\bigl(a_2+c_2s^{\lambda_2}+\mathcal{F}^\infty_{\ell_2}(\mu_0)\bigr),\]
    then from Lemma~\ref{Lemma4} we have
        \[D_2\circ D_1(s;\mu)=s^{\lambda_1\lambda_2}\bigl(a_1^{\lambda_2}a_2+a_1^{2\lambda_2}c_2s^{\lambda_1\lambda_2}+\mathcal{F}^\infty_{\ell_{1,2}}(\mu_0)\bigr)=s^{\Lambda_{0,2}}\bigl(A_{1,2}+C_{1,2}s^{\Lambda_{0,2}}+\mathcal{F}^\infty_{\ell_{1,2}}(\mu_0)\bigr),\]
    for any $\ell_{1,2}\in(\Lambda_{0,2}^0,\min\{\Lambda_{0,1}^0,2\Lambda_{0,2}^0\})$. Suppose that
        \[D_{n-1}\circ\ldots\circ D_1(s;\mu)=s^{\Lambda_{0,n-1}}\bigl(A_{1,n-1}+C_{1,n-1}s^{\Lambda_{0,n-1}}+\mathcal{F}^\infty_{\ell_{n-1}}(\mu_0)\big),\]
    with $\ell_{n-1}\in(\Lambda_{0,n-1}^0,\min\{\Lambda_{0,n-2}^0,2\Lambda_{0,n-1}^0\})$ and let
        \[D_n(s;\mu)=s^{\lambda_n}\bigl(a_n+c_ns^{\lambda_n}+\mathcal{F}^\infty_{\ell_n}(\mu_0)\bigr).\]
    From Lemma~\ref{Lemma4} we have
        \[\begin{array}{ll}
            D_n\circ(D_{n-1}\circ\ldots\circ D_1)(s;\mu) &=s^{\Lambda_{0,n-1}\lambda_n}\bigl(A_{1,n-1}^{\lambda_n}a_n+A_{1,n-1}^{2\lambda_n}c_ns^{\Lambda_{0,n-1}\lambda_n}+\mathcal{F}^\infty_{\ell_{1,n}}(\mu_0)\bigr) \vspace{0.2cm} \\

            &=s^{\Lambda_{0,n}}\bigl(A_{1,n}+C_{1,n}s^{\Lambda_{0,n}}+\mathcal{F}^\infty_{\ell_{1,n}}(\mu_0)\bigr),
        \end{array}\]
    with $\ell_{1,n}\in(\Lambda_{0,n}^0\min\{\Lambda_{0,n-1}^0,2\Lambda_{0,n}^0\})$. The result now follows by induction.
\end{proof}

\begin{lem}\label{Lemma5}
    Let $\{X_\mu\}_{\mu\in\Lambda}$ be a smooth family of planar smooth vector fields having a persistent polycycle $\Gamma$ with hyperbolic saddles $p_1$ and $p_2$. Let $\mu_0\in\Lambda$ be such that $\lambda_1^0>1$ and $\lambda_2^0<1$. Then
        \[D_2\circ D_1(s;\mu)=\left\{\begin{array}{ll}
            s^{\lambda_1\lambda_2}\bigl(\Upsilon_0+\Upsilon_1 s+\mathcal{F}^\infty_\ell(\mu_0)\bigr), & \text{if } \lambda_1^0\lambda_2^0>1, \vspace{0.2cm} \\
            
            s^{\lambda_1\lambda_2}\bigl(\Upsilon_0+\Upsilon_\omega s+\mathcal{F}^\infty_{\ell'}(\mu_0)\bigr), & \text{if } \lambda_1^0\lambda_2^0=1, \vspace{0.2cm} \\
            
            s^{\lambda_1\lambda_2}\bigl(\Upsilon_0+\Upsilon_2 s^{\lambda_1\lambda_2}+\mathcal{F}^\infty_{\ell''}(\mu_0)\bigr), & \text{if } \lambda_1^0\lambda_2^0<1, 
        \end{array}\right.\]
    for any given 
        \[\ell\in(1,\min\{\lambda_1^0\lambda_2^0,2\}), \quad \ell'\in(1,\min\{\lambda_1^0,2\}), \quad \ell''\in(\lambda_1^0\lambda_2^0,\min\{2\lambda_1^0\lambda_2^0,1\});\]
    where 
        \[\Upsilon_0=(\Delta_{00}^1)^{\lambda_2}\Delta_{00}^2, \quad \Upsilon_1=\lambda_2(\Delta_{00}^1)^{\lambda_2-1}\Delta_{00}^2\Delta_{10}^1, \quad \Upsilon_2=(\Delta_{00}^1)^{2\lambda_2}\Delta_{01}^2,\] 
    $\Upsilon_\omega=\Upsilon_1+\bigl(1+\alpha\omega(s;\alpha)\bigr)\Upsilon_2$ and $\alpha=1-\lambda_1\lambda_2$.
\end{lem}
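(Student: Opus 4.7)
The plan is to proceed exactly as in the previous lemmas of the section, computing $D_2(D_1(s;\mu))$ directly by substitution, but now keeping track of two distinct corrections of different orders: the term of order $s$ coming from $\Delta_{10}^1$ inside $(D_1)^{\lambda_2}$, and the term of order $s^{\lambda_1\lambda_2}$ coming from the $\Delta_{01}^2 (\cdot)^{\lambda_2}$ inside $D_2$. The trichotomy in the statement is simply a consequence of whether $s$ or $s^{\lambda_1\lambda_2}$ dominates as $s\to 0^+$, together with the resonance phenomenon when they are asymptotically comparable.

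Concretely, I would first invoke Theorem~\ref{Teo:Dulacmap} to write, for suitable $\ell_1\in(1,\min\{\lambda_1^0,2\})$ and $\ell_2\in(\lambda_2^0,\min\{2\lambda_2^0,1\})$,
\[
D_1(s;\mu)=s^{\lambda_1}\bigl(\Delta_{00}^1+\Delta_{10}^1 s+\mathcal{F}^\infty_{\ell_1}(\mu_0)\bigr),\quad D_2(s;\mu)=s^{\lambda_2}\bigl(\Delta_{00}^2+\Delta_{01}^2 s^{\lambda_2}+\mathcal{F}^\infty_{\ell_2}(\mu_0)\bigr).
\]
Then I would apply Lemma~\ref{Lemma1} (with $\lambda$ replaced by $1$) to expand $(D_1(s;\mu))^{\lambda_2}$, yielding a leading term $(\Delta_{00}^1)^{\lambda_2}$ plus $\lambda_2(\Delta_{00}^1)^{\lambda_2-1}\Delta_{10}^1 s$ plus a flat remainder. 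Substituting into $D_2$ gives
\[
D_2\circ D_1(s;\mu)=s^{\lambda_1\lambda_2}\bigl((\Delta_{00}^1)^{\lambda_2}+\lambda_2(\Delta_{00}^1)^{\lambda_2-1}\Delta_{10}^1 s+\mathcal{F}^\infty_{\ell_1}(\mu_0)\bigr)\bigl(\Delta_{00}^2+(\Delta_{00}^1)^{\lambda_2}\Delta_{01}^2 s^{\lambda_1\lambda_2}+\mathcal{F}^\infty_{\ell_3}(\mu_0)\bigr),
\]
where $\ell_3$ is determined by Lemma~\ref{LemaPropFlat}(h) applied to the composition. Distributing and collecting the three dominant terms produces $\Upsilon_0$, $\Upsilon_1 s$ and $\Upsilon_2 s^{\lambda_1\lambda_2}$, with every other product lying in a suitable $\mathcal{F}^\infty$ class by Lemma~\ref{LemaPropFlat}(d,e,g).

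At this point the three cases split naturally. When $\lambda_1^0\lambda_2^0>1$, the order $s^{\lambda_1\lambda_2}$ is strictly higher than $s$ in a neighborhood of $\mu_0$, so $\Upsilon_2 s^{\lambda_1\lambda_2}$ is absorbed into $\mathcal{F}^\infty_\ell(\mu_0)$ for $\ell\in(1,\min\{\lambda_1^0\lambda_2^0,2\})$; when $\lambda_1^0\lambda_2^0<1$ the roles are reversed, $\Upsilon_1 s$ goes into the remainder and one keeps $\Upsilon_2 s^{\lambda_1\lambda_2}$, with $\ell''\in(\lambda_1^0\lambda_2^0,\min\{2\lambda_1^0\lambda_2^0,1\})$. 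The resonant case $\lambda_1^0\lambda_2^0=1$ is the main obstacle, because near $\mu_0$ neither of $s$ and $s^{\lambda_1\lambda_2}$ dominates uniformly; here I use the identity $s^{\lambda_1\lambda_2}=s\cdot s^{-\alpha}=s\bigl(1+\alpha\omega(s;\alpha)\bigr)$ with $\alpha=1-\lambda_1\lambda_2$, which follows directly from Definition~\ref{Def8}, to combine the two terms into $\Upsilon_\omega s$. Finally, Lemma~\ref{Lema:omega} guarantees that $\omega(s;\alpha)\in\mathcal{F}^\infty_{-\delta}(\{\alpha<\delta\})$ for every $\delta>0$, so that the extra factor $1+\alpha\omega(s;\alpha)$ is harmless in the $\mathcal{F}^\infty$-calculus and the remainder can be placed in $\mathcal{F}^\infty_{\ell'}(\mu_0)$ with $\ell'\in(1,\min\{\lambda_1^0,2\})$. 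The bookkeeping for the admissible range of each $\ell$, $\ell'$, $\ell''$ is exactly as in the proof of Lemma~\ref{Lemma4}, tracking the constraints coming from the free choice of $\ell_1$, $\ell_2$ and the resulting $\ell_3$.
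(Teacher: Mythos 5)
Your proposal is correct and follows essentially the same route as the paper: expand $D_1$ and $D_2$ via Theorem~\ref{Teo:Dulacmap}, use Lemma~\ref{Lemma1} to raise the inner factor to the power $\lambda_2$, collect the three candidate terms $\Upsilon_0$, $\Upsilon_1 s$, $\Upsilon_2 s^{\lambda_1\lambda_2}$, and then split into the three cases according to the sign of $1-\lambda_1^0\lambda_2^0$, invoking the identity $s^{-\alpha}=1+\alpha\omega(s;\alpha)$ in the resonant case. The bookkeeping of the admissible flatness exponents that you defer to the argument of Lemma~\ref{Lemma4} is exactly what the paper carries out explicitly.
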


\begin{proof}
    Since $\lambda_1^0>1$ and $\lambda_2^0<1$, from Theorem~\ref{Teo:Dulacmap} we have that
          \[D_1(s;\mu)=s^{\lambda_1}\bigl(\Delta_{00}^1+\Delta_{10}^1s+\mathcal{F}^\infty_{\ell_1}(\mu_0)\bigr), \quad D_2(s;\mu)=s^{\lambda_2}\bigl(\Delta_{00}^2+\Delta_{01}^2s^{\lambda_2}+\mathcal{F}^\infty_{\ell_2}(\mu_0)\bigr),\]
    for any given $\ell_1\in(1,\min\{\lambda_1^0,2\})$ and $\ell_2\in(\lambda_2^0,\min\{2\lambda_2^0,1\})$. Similarly Lemmas~\ref{Lemma3} and \ref{Lemma4} observe that
        \[\begin{array}{l}
            D_2\circ D_1(s;\mu)=s^{\lambda_1\lambda_2}\bigl(\Delta_{00}^1+\Delta_{10}^1s+\mathcal{F}^\infty_{\ell_1}(\mu_0)\bigr)^{\lambda_2}\cdot \vspace{0.2cm} \\
            \quad\cdot\bigl(\Delta_{00}^2+\Delta_{01}^2s^{\lambda_1\lambda_2}\bigl(\Delta_{00}^1+\Delta_{10}^1s+\mathcal{F}^\infty_{\ell_1}(\mu_0)\bigr)^{\lambda_2}+\mathcal{F}^\infty_{\ell_3}(\mu_0)\bigr) \vspace{1cm} \\

            \qquad =s^{\lambda_1\lambda_2}\bigl((\Delta_{00}^1)^{\lambda_2}+\lambda_2(\Delta_{00}^1)^{\lambda_2-1}\Delta_{10}^1s+\mathcal{F}^\infty_{\ell_1}(\mu_0)\bigr)\cdot \vspace{0.2cm} \\
            \qquad\quad\cdot\bigl(\Delta_{00}^2+\Delta_{01}^2s^{\lambda_1\lambda_2}\bigl((\Delta_{00}^1)^{\lambda_2}+\lambda_2(\Delta_{00}^1)^{\lambda_2-1}\Delta_{10}^1s+\mathcal{F}^\infty_{\ell_1}(\mu_0)\bigr)+\mathcal{F}^\infty_{\ell_3}(\mu_0)\bigr) \vspace{1cm} \\

            \qquad\qquad=s^{\lambda_1\lambda_2}\bigl((\Delta_{00}^1)^{\lambda_2}+\lambda_2(\Delta_{00}^1)^{\lambda_2-1}\Delta_{10}^1s+\mathcal{F}^\infty_{\ell_1}(\mu_0)\bigr)\bigl(\Delta_{00}^2+(\Delta_{00}^1)^{\lambda_2}\Delta_{01}^2s^{\lambda_1\lambda_2}+\mathcal{F}^\infty_{\ell_4}(\mu_0)\bigr).
        \end{array}\]
    with $\ell_3\in(\lambda_1^0\lambda_2^0,\min\{2\lambda_1^0\lambda_2^0,\lambda_1^0\})$ and
        \[\ell_4\in(\lambda_1^0\lambda_2^0,\min\{\lambda_1^0\lambda_2^0+1,\ell_3\})=(\lambda_1^0\lambda_2^0,\min\{\lambda_1^0\lambda_2^0+1,2\lambda_1^0\lambda_2^0,\lambda_1^0\}).\]
    So far we have proved that $D_2\circ D_1(s;\mu)$ can be expressed as,
        \begin{equation}\label{5}
            s^{\lambda_1\lambda_2}\bigl((\Delta_{00}^1)^{\lambda_2}+\lambda_2(\Delta_{00}^1)^{\lambda_2-1}\Delta_{10}^1s+\mathcal{F}^\infty_{\ell_1}(\mu_0)\bigr)\bigl(\Delta_{00}^2+(\Delta_{00}^1)^{\lambda_2}\Delta_{01}^2s^{\lambda_1\lambda_2}+\mathcal{F}^\infty_{\ell_4}(\mu_0)\bigr).
        \end{equation}
    However we cannot expand these last two factors in a unique way because the next term after the leading one depend on the sign of $1-\lambda_1^0\lambda_2^0$. Hence we need to continue in a case-by-case basis. 
    
    If $\lambda_1^0\lambda_2^0>1$ then we can expand \eqref{5} in to
        \begin{equation}\label{6}
            D_2\circ D_1(s;\mu)=s^{\lambda_1\lambda_2}\bigl(\Delta_{00}^2(\Delta_{00}^1)^{\lambda_2}+\lambda_2\Delta_{00}^2(\Delta_{00}^1)^{\lambda_2-1}\Delta_{10}^1s+\mathcal{F}^\infty_{\ell_5}(\mu_0)\bigr),
        \end{equation}
    for any given $\ell_5\in(1,\min\{\lambda_1^0\lambda_2^0,\ell_1,\ell_4\})=(1,\min\{\lambda_1\lambda_2,2\})$. 
    
    If $\lambda_1^0\lambda_2^0<1$ then we can expand \eqref{5} in to
        \begin{equation}\label{7}
            D_2\circ D_1(s;\mu)=s^{\lambda_1\lambda_2}\bigl(\Delta_{00}^2(\Delta_{00}^1)^{\lambda_2}+(\Delta_{00}^1)^{2\lambda_2}\Delta_{01}^2s^{\lambda_1\lambda_2}+\mathcal{F}^\infty_{\ell_6}(\mu_0)\bigr),
        \end{equation}
    for any given $\ell_6\in(\lambda_1^0\lambda_2^0,\min\{\ell_1,\ell_4,1\})=(\lambda_1^0\lambda_2^0,\min\{2\lambda_1^0\lambda_2^0,1\})$. 
    
    If $\lambda_1^0\lambda_2^0=1$ then let $\alpha=1-\lambda_1\lambda_2$ and observe that
        \begin{equation}\label{8}
            s^{-\alpha}=1+\alpha\omega(s;\alpha),
        \end{equation}
    where we recall that $\omega(s;\alpha)$ is the \'Ecalle--Roussarie compensator~\eqref{ERC}. Since $\lambda_1^0\lambda_2^0=1$ it follows that we cannot isolate the monomials of $s^1$ and $s^{\lambda_1\lambda_2}$ from each other, as in \eqref{6} and \eqref{7}. Hence we expand \eqref{5} in to 
        \begin{equation}\label{9}
        \begin{array}{l}
            D_2\circ D_1(s;\mu) \vspace{0.2cm} \\
            \quad=s^{\lambda_1\lambda_2}\bigl(\underbrace{\Delta_{00}^2(\Delta_{00}^1)^{\lambda_2}}_{\Upsilon_0}+\underbrace{\lambda_2\Delta_{00}^2(\Delta_{00}^1)^{\lambda_2-1}\Delta_{10}^1}_{\Upsilon_1}s+\underbrace{(\Delta_{00}^1)^{2\lambda_2}\Delta_{01}^2}_{\Upsilon_2}s^{\lambda_1\lambda_2}+\mathcal{F}^\infty_{\ell_7}(\mu_0)\bigr), \vspace{0.2cm} \\
            
            \qquad=s^{\lambda_1\lambda_2}\bigl(\Upsilon_0+(\Upsilon_1+\Upsilon_2s^{-\alpha})s+\mathcal{F}^\infty_{\ell_6}(\mu_0)\bigr) \vspace{0.2cm} \\
            \qquad\quad=s^{\lambda_1\lambda_2}\bigl(\Upsilon_0+\bigl(\Upsilon_1+\Upsilon_2(1+\alpha\omega(s;\alpha)\bigr)s+\mathcal{F}^\infty_{\ell_7}(\mu_0)\bigr) \vspace{0.2cm} \\

            \qquad\qquad =s^{\lambda_1\lambda_2}\bigl(\Upsilon_0+\Upsilon_\omega s+\mathcal{F}^\infty_{\ell_7}(\mu_0)\bigr),
        \end{array}
        \end{equation}
    with $\ell_7\in(1,\min\{\ell_1,\ell_4,\lambda_1^0\lambda_2^0+1\})=(1,\min\{\lambda_1^0,2\})$ and the last third due to \eqref{8}. The lemma now follows from \eqref{6}, \eqref{7} and \eqref{9}.
\end{proof}

\begin{obs}
    Under the hypothesis of Lemma~\ref{Lemma5} we observe that the compensator $\omega(s;\alpha)$ appearing when $\lambda_1^0\lambda_2^0=1$ is a compact way to write $D_2\circ D_1$ in this case. More precisely suppose $\lambda_1^0\lambda_2^0=1$ and observe that given $\lambda_1\approx\lambda_1^0$ and $\lambda_2\approx\lambda_2^0$ we have $\alpha=0$ if and only if $\lambda_1\lambda_2=1$. Moreover if $\lambda_1\lambda_2\neq1$ then it follows from \eqref{8} that $(1+\alpha\omega(s;\alpha))s=s^{\lambda_1\lambda_2}$. Replacing this at~\eqref{9} we obtain that if $\lambda_1^0\lambda_1^0=1$, then
        \begin{equation}\label{00}
        D_2\circ D_1(s;\mu)=\left\{\begin{array}{ll}
            s^{\lambda_1\lambda_2}\bigl(\Upsilon_0+\Upsilon_1 s+\Upsilon_2s^{\lambda_1\lambda_2}+\mathcal{F}^\infty_{\ell'}(\mu_0)\bigr), & \text{if } \lambda_1\lambda_2>1, \vspace{0.2cm} \\
            
            s\cdot\bigl(\Upsilon_0+(\Upsilon_1+\Upsilon_2)s+\mathcal{F}^\infty_{\ell'}(\mu_0)\bigr), & \text{if } \lambda_1\lambda_2=1, \vspace{0.2cm} \\
            
            s^{\lambda_1\lambda_2}\bigl(\Upsilon_0+\Upsilon_2s^{\lambda_1\lambda_2}+\Upsilon_1s+\mathcal{F}^\infty_{\ell'}(\mu_0)\bigr), & \text{if } \lambda_1\lambda_2<1. 
        \end{array}\right.
        \end{equation}
    That is, the next term after the leading one depend on the sign of $1-\lambda_1\lambda_2$. Since the initial condition satisfies $\lambda_1^0\lambda_2^0=1$ we have that the explicit expression of $D_2\circ D_1$ can change with arbitrarily small perturbations at the initial condition. Therefore, to understand the regularity of the compensator $\omega(s;\mu)$ helps to understand the regularity of $D_2\circ D_1$ when interchanging among the explicit expressions given at~\eqref{00}. To this end, we refer to Lemma~$A.3$ and Corollary~$A.7$ of \cite{MarVilDulacLocal}. 
\end{obs}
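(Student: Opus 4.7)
The plan is to derive equation~\eqref{00} directly from equation~\eqref{9} of Lemma~\ref{Lemma5}, without redoing the composition computation. Under the resonant hypothesis $\lambda_1^0\lambda_2^0=1$, equation~\eqref{9} already delivers the compact expression
\begin{equation*}
D_2\circ D_1(s;\mu)=s^{\lambda_1\lambda_2}\bigl(\Upsilon_0+\Upsilon_\omega s+\mathcal{F}^\infty_{\ell'}(\mu_0)\bigr),
\end{equation*}
with $\Upsilon_\omega=\Upsilon_1+(1+\alpha\omega(s;\alpha))\Upsilon_2$ and $\alpha=1-\lambda_1\lambda_2$, so all that remains is to unfold the middle term $\Upsilon_\omega s$ in each of the three scenarios dictated by the sign of $1-\lambda_1\lambda_2$.

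First, I would record the equivalence $\alpha=0\iff\lambda_1\lambda_2=1$, which is immediate from $\alpha=1-\lambda_1\lambda_2$ and the fact that $\lambda_1$, $\lambda_2$ vary smoothly near $\lambda_1^0$, $\lambda_2^0$. For the nonresonant subcase $\alpha\neq 0$, I would multiply identity~\eqref{8}, namely $s^{-\alpha}=1+\alpha\omega(s;\alpha)$, by $s$ to obtain $(1+\alpha\omega(s;\alpha))s=s^{1-\alpha}=s^{\lambda_1\lambda_2}$. Substituting into $\Upsilon_\omega s=\Upsilon_1 s+(1+\alpha\omega(s;\alpha))\Upsilon_2 s$ turns~\eqref{9} into
\begin{equation*}
D_2\circ D_1(s;\mu)=s^{\lambda_1\lambda_2}\bigl(\Upsilon_0+\Upsilon_1 s+\Upsilon_2 s^{\lambda_1\lambda_2}+\mathcal{F}^\infty_{\ell'}(\mu_0)\bigr),
\end{equation*}
and the first and third lines of~\eqref{00} then follow by simply reordering the two correction monomials in decreasing order of magnitude as $s\to 0^+$: when $\lambda_1\lambda_2>1$ the exponent $1$ is smaller than $\lambda_1\lambda_2$ so $\Upsilon_1 s$ dominates $\Upsilon_2 s^{\lambda_1\lambda_2}$, while when $\lambda_1\lambda_2<1$ the roles swap. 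In the exactly resonant subcase $\lambda_1\lambda_2=1$, one has $\alpha=0$, hence $\alpha\omega(s;\alpha)\equiv 0$, so $\Upsilon_\omega=\Upsilon_1+\Upsilon_2$ and $s^{\lambda_1\lambda_2}=s$; this yields the middle line of~\eqref{00} after a single factoring. The parameter $\ell'\in(1,\min\{\lambda_1^0,2\})$ is inherited unchanged from Lemma~\ref{Lemma5}, so no new flatness bookkeeping is required.

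The step I expect to require the most conceptual care, and which is really the substantive content of the remark, is ensuring that the case-by-case unfolding is consistent with the uniform statement~\eqref{9}. The point is that the identity $(1+\alpha\omega(s;\alpha))s=s^{\lambda_1\lambda_2}$ is valid only for $\alpha\neq 0$, and the explicit monomial $s^{\lambda_1\lambda_2}$ is discontinuous in $\alpha$ at $\alpha=0$ in the sense that the ordering between $s$ and $s^{\lambda_1\lambda_2}$ flips there. The compensator $\omega(s;\alpha)$ absorbs this discontinuity: by Lemma~\ref{Lema:omega} the product $\alpha\omega(s;\alpha)$ is well-behaved as $\alpha\to 0$, so the right-hand side of~\eqref{9} depends smoothly on the parameters even though its ``natural'' asymptotic form~\eqref{00} does not. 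I would emphasize this in the write-up, since it justifies the author's closing comment that~\eqref{9} is the correct compact statement and~\eqref{00} should be read as its pointwise unfolding into the three regimes $\lambda_1\lambda_2\gtreqless 1$.
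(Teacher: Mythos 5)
Your proposal reproduces the paper's own derivation: you isolate the algebraic identity $(1+\alpha\omega(s;\alpha))s=s^{1-\alpha}=s^{\lambda_1\lambda_2}$ from~\eqref{8}, substitute it into $\Upsilon_\omega s$ in~\eqref{9}, and then split into the three regimes according to the sign of $1-\lambda_1\lambda_2$, treating $\alpha=0$ separately where $\alpha\omega(s;\alpha)\equiv 0$. This matches the remark's argument step for step, and your closing observation about the compensator smoothing out the discontinuity in the asymptotic ordering is precisely the point the authors are making.
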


\begin{lem}\label{Lemma6}
    Let $\{X_\mu\}_{\mu\in\Lambda}$ be a smooth family of planar smooth vector fields having a persistent polycycle $\Gamma$ with hyperbolic saddles $p_1$ and $p_2$. Let $\mu_0\in\Lambda$ be such that $\lambda_1^0<1$ and $\lambda_2^0>1$. Then for any given $\ell\in(\lambda_1^0,\min\{\lambda_1^0\lambda_2^0,2\lambda_1^0,1\})$ it holds
        \[D_2\circ D_1(s;\mu)=s^{\lambda_1\lambda_2}\bigl(\Upsilon_0+\Upsilon_3s^{\lambda_1}+\mathcal{F}^\infty_\ell(\mu_0)\bigr),\]
    where $\Upsilon_0=(\Delta_{00}^1)^{\lambda_2}\Delta_{00}^2$ and $\Upsilon_3=\lambda_2(\Delta_{00}^1)^{\lambda_2-1}\Delta_{00}^2\Delta_{01}^1+(\Delta_{00}^1)^{\lambda_2+1}\Delta_{10}^2$.
\end{lem}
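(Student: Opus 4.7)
My approach would closely parallel the proofs of Lemmas~\ref{Lemma3}, \ref{Lemma4}, and \ref{Lemma5}, which is the natural template: plug $D_1$ into $D_2$, expand both factors to the desired order, and collect like terms. Concretely, since $\lambda_1^0<1$ I would invoke Theorem~\ref{Teo:Dulacmap}$(a)$ to write
\[D_1(s;\mu)=s^{\lambda_1}\bigl(\Delta_{00}^1+\Delta_{01}^1 s^{\lambda_1}+\mathcal{F}^\infty_{\ell_1}(\mu_0)\bigr)\]
for any $\ell_1\in(\lambda_1^0,\min\{2\lambda_1^0,1\})$, and since $\lambda_2^0>1$ I would invoke Theorem~\ref{Teo:Dulacmap}$(b)$ to write
\[D_2(u;\mu)=u^{\lambda_2}\bigl(\Delta_{00}^2+\Delta_{10}^2 u+\mathcal{F}^\infty_{\ell_2}(\mu_0)\bigr)\]
for any $\ell_2\in(1,\min\{\lambda_2^0,2\})$.

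The main computation is then the substitution. For the flat remainder of $D_2$ composed with $D_1$, Lemma~\ref{LemaPropFlat}(h) produces $\mathcal{F}^\infty_{\ell_3}(\mu_0)$ with $\ell_3=\lambda_1^0\ell_2$. For the factor $[D_1(s;\mu)]^{\lambda_2}$, I would apply Lemma~\ref{Lemma1} to get $s^{\lambda_1\lambda_2}\bigl((\Delta_{00}^1)^{\lambda_2}+\lambda_2(\Delta_{00}^1)^{\lambda_2-1}\Delta_{01}^1 s^{\lambda_1}+\mathcal{F}^\infty_{\ell_1}(\mu_0)\bigr)$. Substituting $u=D_1(s;\mu)$ in the $\Delta_{10}^2 u$ term yields $\Delta_{10}^2\Delta_{00}^1 s^{\lambda_1}$ as a genuine $s^{\lambda_1}$ contribution, plus $\Delta_{10}^2\Delta_{01}^1 s^{2\lambda_1}$ and $s^{\lambda_1}\mathcal{F}^\infty_{\ell_1}(\mu_0)$, both of which I can absorb into a single $\mathcal{F}^\infty_{\ell_4}(\mu_0)$ remainder using Lemma~\ref{LemaPropFlat}(d,g). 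The key observation is that when I expand the product of the two bracketed factors, the coefficient of $s^{\lambda_1}$ is
\[\lambda_2(\Delta_{00}^1)^{\lambda_2-1}\Delta_{01}^1\Delta_{00}^2+(\Delta_{00}^1)^{\lambda_2+1}\Delta_{10}^2=\Upsilon_3,\]
coming from two independent sources (the $\Delta_{01}^1$ correction in $D_1^{\lambda_2}$ times $\Delta_{00}^2$, and the constant part of $D_1^{\lambda_2}$ times $\Delta_{10}^2\Delta_{00}^1$), and these are the only two. All other cross-products, namely the $s^{2\lambda_1}$ piece, $(\Delta_{00}^1)^{\lambda_2}\mathcal{F}^\infty_{\ell_4}$, the $s^{\lambda_1}\mathcal{F}^\infty_{\ell_1}$ and $s^{\lambda_1}\mathcal{F}^\infty_{\ell_4}$ pieces, $\Delta_{00}^2\mathcal{F}^\infty_{\ell_1}$ and the product of two flat pieces, are strictly of higher order than $s^{\lambda_1}$.

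The main obstacle, and the bookkeeping task that requires real care, is determining the optimal interval for $\ell$. Since $\ell_1$ ranges over $(\lambda_1^0,\min\{2\lambda_1^0,1\})$ and $\ell_2$ ranges over $(1,\min\{\lambda_2^0,2\})$, we get $\ell_3\in(\lambda_1^0,\min\{2\lambda_1^0,\lambda_1^0\lambda_2^0\})$ and consequently $\ell_4\in(\lambda_1^0,\min\{2\lambda_1^0,\lambda_1^0\lambda_2^0,\lambda_1^0+\ell_1\})=(\lambda_1^0,\min\{2\lambda_1^0,\lambda_1^0\lambda_2^0\})$, after noting that $\lambda_1^0+\ell_1>2\lambda_1^0$. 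The remainder of the full product then lies in $\mathcal{F}^\infty_{\ell'}(\mu_0)$ for any $\ell'\in(\lambda_1^0,\min\{2\lambda_1^0,\ell_4,\ell_1\})$; taking the supremum over admissible choices of $\ell_1$ and $\ell_2$ yields precisely $\ell\in(\lambda_1^0,\min\{\lambda_1^0\lambda_2^0,2\lambda_1^0,1\})$, as claimed. Because $\lambda_2^0>1$ is strict, we have $\lambda_1^0\lambda_2^0>\lambda_1^0$ so this interval is non-empty, and there is no need for an \'Ecalle--Roussarie compensator (contrast with Lemma~\ref{Lemma5}), which makes this case cleaner than the mixed one.
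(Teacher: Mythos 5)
Your proposal is correct and follows essentially the same route as the paper's proof: expand $D_1$ via Theorem~\ref{Teo:Dulacmap}$(a)$ and $D_2$ via Theorem~\ref{Teo:Dulacmap}$(b)$, substitute, apply Lemma~\ref{Lemma1} to the power $\lambda_2$, collect the two contributions to the $s^{\lambda_1}$ coefficient, and track the flatness exponents through Lemma~\ref{LemaPropFlat}(d,g,h) to arrive at the same interval for $\ell$. Your bookkeeping of $\ell_3$, $\ell_4$ and the final interval matches the paper's computation exactly.
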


\begin{proof}
    Let $\ell\in(\lambda_1^0,\min\{\lambda_1^0\lambda_2^0,2\lambda_1^0,1\})$. Since $\lambda_1^0<1$ and $\lambda_2^0>1$, from Theorem~\ref{Teo:Dulacmap} we have that
        \[D_1(s;\mu)=s^{\lambda_1}\bigl(\Delta_{00}^1+\Delta_{01}^1s^{\lambda_1}+\mathcal{F}^\infty_{\ell_1}(\mu_0)\bigr), \quad D_2(s;\mu)=s^{\lambda_2}\bigl(\Delta_{00}^2+\Delta_{10}^2s+\mathcal{F}^\infty_{\ell_2}(\mu_0)\bigr),\]
    for any given $\ell_1\in(\lambda_1^0,\min\{2\lambda_1^0,1\})$ and $\ell_2\in(1,\min\{\lambda_2^0,2\})$. Similarly to the previous cases we observe that
        \[\begin{array}{l}
            D_2\circ D_1(s;\mu)=s^{\lambda_1\lambda_2}\bigl(\Delta_{00}^1+\Delta_{01}^1s^{\lambda_1}+\mathcal{F}^\infty_{\ell_1}(\mu_0)\bigr)^{\lambda_2}\cdot \vspace{0.2cm} \\
            
            \quad\cdot\bigl(\Delta_{00}^2+\Delta_{10}^2s^{\lambda_1}\bigl(\Delta_{00}^1+\Delta_{01}^1s^{\lambda_1}+\mathcal{F}^\infty_{\ell_1}(\mu_0)\bigr)+\mathcal{F}^\infty_{\ell_3}(\mu_0)\bigr) \vspace{1cm} \\

            \qquad=s^{\lambda_1\lambda_2}\bigl((\Delta_{00}^1)^{\lambda_2}+\lambda_2(\Delta_{00}^1)^{\lambda_2-1}\Delta_{01}^1s^{\lambda_1}+\mathcal{F}^\infty_{\ell_1}(\mu_0)\bigr)\cdot \vspace{0.2cm} \\
            \qquad\quad\cdot\bigl(\Delta_{00}^2+\Delta_{00}^1\Delta_{10}^2s^{\lambda_1}+\mathcal{F}^\infty_{\ell_4}(\mu_0)\bigr) \vspace{1cm} \\

            \qquad\quad=s^{\lambda_1\lambda_2}\bigl(\Delta_{00}^2(\Delta_{00}^1)^{\lambda_2}+\bigl(\lambda_2\Delta_{00}^2(\Delta_{00}^1)^{\lambda_2-1}\Delta_{01}^1+(\Delta_{00}^1)^{\lambda_2+1}\Delta_{10}^2\bigr)s^{\lambda_1}+\mathcal{F}^\infty_{\ell_5}(\mu_0)\bigr),
        \end{array}\]
        with $\ell_3\in(\lambda_1^0,\min\{\lambda_1^0\lambda_2^0,2\lambda_1^0\})$, $\ell_4\in(\lambda_1^0,\min\{2\lambda_1^0,\lambda_1^0+\ell_1,\ell_3\})=(\lambda_1^0,\min\{\lambda_1^0\lambda_2^0,2\lambda_1^0\})$ and 
            \[\ell_5\in(\lambda_1^0,\min\{\ell_1,\ell_4,2\lambda_1\})=(\lambda_1^0,\min\{\lambda_1^0\lambda_2^0,2\lambda_1^0,1\}).\]
        In particular we can take $\ell_5=\ell$.
\end{proof}

\subsection{Inverse of a Dulac map}

Note that if $\{X_\mu\}_{\mu\in\Lambda}$ is a smooth family of planar smooth vector fields having a hyperbolic saddle $p=p(\mu)$ with hyperbolicity ratio $\lambda(\mu)$, then its Dulac map $D(s;\mu)$ has a well defined inverse $D^{-1}(s;\mu)$ which happens to be the Dulac map of $p$ in relation to the family $\{-X_\mu\}_{\mu\in\Lambda}$, with hyperbolicity ratio $\lambda(\mu)^{-1}$. With this knowledge, we can use the previous lemmas to obtain a formula for the first coefficients of $D^{-1}$ in function of the coefficients of $D$.

\begin{lem}
    Let $\{X_\mu\}_{\mu\in\Lambda}$ be a smooth family of planar smooth vector fields having a hyperbolic saddle $p$. Set $\rho=\lambda^{-1}$, $\rho^0=(\lambda^0)^{-1}$ and 
        \[D(s;\mu)=s^\lambda\bigl(\Delta_{00}+\mathcal{F}^\infty_\ell(\mu_0)\bigr), \quad D^{-1}(s;\mu)=s^{\rho}\bigl(\Omega_{00}+\mathcal{F}^\infty_\eta(\mu_0)\bigr),\]
    with $\ell\in(0,\min\{\lambda^0,1\})$, $\eta\in(0,\min\{\rho^0,1\})$. Then $\Omega_{00}=\Delta_{00}^{-\rho}$.
\end{lem}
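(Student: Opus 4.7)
The plan is to exploit the identity $D \circ D^{-1}(s;\mu) \equiv s$ and apply the composition formula for Dulac maps already proved in Lemma~\ref{Lemma2}. Observe that the expansion hypothesis for $D^{-1}$ in the statement is exactly of the form provided by Theorem~\ref{Teo:Dulacmap} applied to $-X_\mu$, for which the origin is a hyperbolic saddle with hyperbolicity ratio $\rho = \lambda^{-1}$; so treating $D^{-1}$ as a bona fide Dulac map of Dulac type is legitimate.

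Concretely, I would set $D_1 = D^{-1}$ (with hyperbolicity ratio $\rho$) and $D_2 = D$ (with hyperbolicity ratio $\lambda$), and invoke Lemma~\ref{Lemma2}. Its hypothesis requires $\ell' \in (0, \min\{1, \rho^0, \rho^0 \lambda^0\}) = (0, \min\{1, \rho^0\})$, which is a nonempty interval since $\rho^0 > 0$. Lemma~\ref{Lemma2} then yields, for any such $\ell'$,
\[
D \circ D^{-1}(s;\mu) = s^{\rho\lambda}\bigl(\Omega_{00}^{\lambda}\,\Delta_{00} + \mathcal{F}^\infty_{\ell'}(\mu_0)\bigr) = s\bigl(\Omega_{00}^{\lambda}\,\Delta_{00} + \mathcal{F}^\infty_{\ell'}(\mu_0)\bigr),
\]
where I used $\rho\lambda = 1$.

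Since $D \circ D^{-1}(s;\mu) \equiv s$, dividing through by $s$ gives $\Omega_{00}(\mu)^{\lambda(\mu)}\,\Delta_{00}(\mu) + g(s;\mu) \equiv 1$ for some $g \in \mathcal{F}^\infty_{\ell'}(\mu_0)$ with $\ell' > 0$. By Definition~\ref{defiFlat} we have $|g(s;\mu)| \leqslant C s^{\ell'}$ on a neighborhood of $\mu_0$ for $s$ small, so letting $s \to 0^+$ forces $\Omega_{00}^{\lambda}\,\Delta_{00} = 1$, i.e.\ $\Omega_{00} = \Delta_{00}^{-1/\lambda} = \Delta_{00}^{-\rho}$, as desired.

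The main (minor) obstacle is the verification that Lemma~\ref{Lemma2} indeed applies to the pair $(D^{-1}, D)$: one must check that $D^{-1}$ fits the Dulac-type framework and that the relevant interval for $\ell'$ is nonempty. Both are immediate from $\rho^0 > 0$ together with the symmetry $D^{-1} = $ Dulac map of $-X_\mu$. Once this is in place, uniqueness of the leading coefficient is extracted by the standard trick of isolating the $s \to 0^+$ limit of a finitely flat remainder.
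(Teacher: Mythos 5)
Your proposal is correct and follows essentially the same route as the paper: the paper also invokes Lemma~\ref{Lemma2} on the composition with the identity (using $D^{-1}\circ D$, which yields $\Upsilon_0=\Delta_{00}^{\rho}\Omega_{00}=1$, rather than your $D\circ D^{-1}$, which yields $\Omega_{00}^{\lambda}\Delta_{00}=1$; both give $\Omega_{00}=\Delta_{00}^{-\rho}$). Your explicit justification that $D^{-1}$ is the Dulac map of $\{-X_\mu\}$ with ratio $\rho$, and the $s\to0^+$ limit argument to isolate the leading coefficient, are exactly the points the paper records in the preamble to that subsection and leaves implicit in the proof itself.
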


\begin{proof}
    On one hand we have from Lemma~\ref{Lemma2} that
        \[D^{-1}\circ D(s;\mu)=s\bigl(\Upsilon_0+\mathcal{F}^\infty_{\ell'}(\mu_0)\bigr),\]
    for any given $\ell'\in(0,\min\{\lambda^0,\lambda^0\rho^0\})=(0,\min\{\lambda^0,1\})$,
    where $\Upsilon_0=\Delta_{00}^\rho\Omega_{00}$. On the other hand we have $D^{-1}\circ D(s;\mu)=s$. In particular it follows that $\Upsilon_0=1$, from which we obtain $\Omega_{00}=\Delta_{00}^{-\rho}$.
\end{proof}

\begin{lem}\label{Lemma7}
    Let $\{X_\mu\}_{\mu\in\Lambda}$ be a smooth family of planar smooth vector fields having a hyperbolic saddle $p$. Let $\mu_0\in\Lambda$ be such that $\lambda^0<1$ and denote
        \[D(s;\mu)=s^\lambda\bigl(\Delta_{00}+\Delta_{01}s^\lambda+\mathcal{F}^\infty_\ell(\mu_0)\bigr), \quad D^{-1}(s;\mu)=s^{\rho}\bigl(\Omega_{00}+\Omega_{10}s+\mathcal{F}^\infty_\eta(\mu_0)\bigr),\]
    with $\ell\in(\lambda^0,\min\{2\lambda^0,1\})$, $\eta\in(1,\min\{\rho^0,2\})$. Then
        \[\Omega_{00}=\Delta_{00}^{-\rho}, \quad \Omega_{10}=-\rho\Delta_{00}^{-(2+\rho)}\Delta_{01},\]
    where $\rho=\lambda^{-1}$ and $\rho^0=(\lambda^0)^{-1}$.
\end{lem}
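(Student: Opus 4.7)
The plan is to exploit the identity $D^{-1}\circ D(s;\mu)\equiv s$ and to compute the left-hand side using Lemma~\ref{Lemma6}, viewing $D^{-1}$ as the Dulac map of $-X_\mu$ at $p$ with hyperbolicity ratio $\rho=\lambda^{-1}$ (as recalled in the paragraph preceding the lemma). Since $\lambda^0<1$ forces $\rho^0=(\lambda^0)^{-1}>1$, the composition $D^{-1}\circ D$ fits the hypotheses of Lemma~\ref{Lemma6} with $D_1=D$, $D_2=D^{-1}$, and the identifications $(\lambda_1,\lambda_2)=(\lambda,\rho)$, $(\Delta_{00}^1,\Delta_{01}^1)=(\Delta_{00},\Delta_{01})$, $(\Delta_{00}^2,\Delta_{10}^2)=(\Omega_{00},\Omega_{10})$. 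Moreover the admissible remainder interval in the conclusion of Lemma~\ref{Lemma6} becomes $(\lambda^0,\min\{\lambda^0\rho^0,2\lambda^0,1\})=(\lambda^0,\min\{2\lambda^0,1\})$, and the intervals assumed for $\ell$ and $\eta$ are exactly the ones required to feed Lemma~\ref{Lemma6}.

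Applying Lemma~\ref{Lemma6} and using $\lambda\rho=1$, I obtain
$$s\equiv D^{-1}\circ D(s;\mu)=s\bigl(\Upsilon_0+\Upsilon_3\, s^{\lambda}+\mathcal{F}^\infty_{\ell'}(\mu_0)\bigr),$$
where
$$\Upsilon_0=\Delta_{00}^{\rho}\,\Omega_{00},\qquad \Upsilon_3=\rho\,\Delta_{00}^{\rho-1}\,\Omega_{00}\Delta_{01}+\Delta_{00}^{\rho+1}\,\Omega_{10},$$
and $\ell'\in(\lambda^0,\min\{2\lambda^0,1\})$. Dividing by $s$ and comparing asymptotic orders as $s\to 0^+$ (see below) forces $\Upsilon_0=1$ and $\Upsilon_3=0$. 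The first equation immediately gives $\Omega_{00}=\Delta_{00}^{-\rho}$; substituting this into $\Upsilon_3=0$ and isolating $\Omega_{10}$ yields $\Omega_{10}=-\rho\,\Delta_{00}^{-2}\Omega_{00}\Delta_{01}=-\rho\,\Delta_{00}^{-(2+\rho)}\Delta_{01}$, as claimed.

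The only technical point is the coefficient identification. Fixing $\mu$ in a neighborhood of $\mu_0$, the displayed identity after dividing by $s$ reads $\Upsilon_0-1+\Upsilon_3 s^{\lambda}+\mathcal{F}^\infty_{\ell'}(\mu_0)\equiv 0$ for $s>0$ small. Letting $s\to 0^+$ kills both $\Upsilon_3 s^{\lambda(\mu)}$ (as $\lambda(\mu)>0$) and the flat remainder (as $\ell'>0$), leaving $\Upsilon_0(\mu)=1$. With this in hand, the remaining identity $\Upsilon_3 s^{\lambda}+\mathcal{F}^\infty_{\ell'}(\mu_0)=0$ divided by $s^{\lambda}$ and sent to $s\to 0^+$ forces $\Upsilon_3(\mu)=0$; here we use the continuity of $\lambda$ together with $\ell'>\lambda^0$ to ensure $\ell'-\lambda(\mu)>0$ on a possibly smaller neighborhood of $\mu_0$, so the quotient of the flat remainder by $s^{\lambda}$ indeed tends to zero. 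This uniqueness-of-expansion step is the main (and essentially only) subtlety.
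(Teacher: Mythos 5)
Your proposal is correct and follows essentially the same route as the paper: apply Lemma~\ref{Lemma6} to the identity $D^{-1}\circ D(s;\mu)\equiv s$, identify $\Upsilon_0=1$ and $\Upsilon_3=0$, and solve for $\Omega_{00}$ and $\Omega_{10}$. The only difference is that you spell out the uniqueness-of-expansion argument for the coefficient identification, which the paper leaves implicit.
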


\begin{proof}
    On the one hand we have from Lemma~\ref{Lemma6} that
        \[D^{-1}\circ D(s;\mu)=s\bigl(\Upsilon_0+\Upsilon_3s^\lambda+\mathcal{F}^\infty_{\ell'}(\mu_0)\bigr),\]
    for any given $\ell'\in(\lambda^0,\min\{2\lambda^0,1\})$, where
        \[\Upsilon_0=\Delta_{00}^\rho\Omega_{00}, \quad \Upsilon_3=\rho\Delta_{00}^{\rho-1}\Omega_{00}\Delta_{01}+\Delta_{00}^{\rho+1}\Omega_{10}.\]
    On the other hand we have $D^{-1}\circ D(s;\mu)=s$. In particular it follows that $\Upsilon_0=1$ and $\Upsilon_3=0$. From the former we obtain $\Omega_{00}=\Delta_{00}^{-\rho}$. Replacing this at the latter we obtain the formula for $\Omega_{10}$.
\end{proof}

\begin{lem}\label{Lemma8}
    Let $\{X_\mu\}_{\mu\in\Lambda}$ be a smooth family of planar smooth vector fields having a hyperbolic saddle $p$. Let $\mu_0\in\Lambda$ be such that $\lambda^0>1$ and denote
        \[D(s;\mu)=s^\lambda\bigl(\Delta_{00}+\Delta_{10}s+\mathcal{F}^\infty_\ell(\mu_0)\bigr), \quad D^{-1}(s;\mu)=s^{\rho}\bigl(\Omega_{00}+\Omega_{01}s^\rho+\mathcal{F}^\infty_\eta(\mu_0)\bigr),\]
    with $\ell\in(1,\min\{\lambda^0,2\})$, $\eta\in(\rho^0,\min\{2\rho^0,1\})$. Then
        \[\Omega_{00}=\Delta_{00}^{-\rho}, \quad \Omega_{01}=-\rho\Delta_{00}^{-(1+2\rho)}\Delta_{10},\]
    where $\rho=\lambda^{-1}$ and $\rho^0=(\lambda^0)^{-1}$.
\end{lem}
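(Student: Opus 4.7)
The plan is to mirror the strategy of Lemma~\ref{Lemma7}: compose $D$ and $D^{-1}$ in whichever order matches an available composition lemma, then equate the resulting expansion with the identity map $s\mapsto s$ and read off $\Omega_{00}$ and $\Omega_{01}$ by uniqueness of the leading coefficients.

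Since $\lambda^0>1$ and $\rho^0=(\lambda^0)^{-1}<1$, the correct choice is $D\circ D^{-1}$: take $D_1:=D^{-1}$ (with hyperbolicity ratio $\rho$, so that $\lambda_1^0=\rho^0<1$) and $D_2:=D$ (with hyperbolicity ratio $\lambda$, so that $\lambda_2^0=\lambda^0>1$). The hypotheses of Lemma~\ref{Lemma6} are then satisfied, and it provides
\[
D\circ D^{-1}(s;\mu)=s^{\rho\lambda}\bigl(\Upsilon_0+\Upsilon_3 s^\rho+\mathcal{F}^\infty_{\ell'}(\mu_0)\bigr),
\]
with $\Upsilon_0=\Omega_{00}^{\lambda}\,\Delta_{00}$ and $\Upsilon_3=\lambda\,\Omega_{00}^{\lambda-1}\,\Delta_{00}\,\Omega_{01}+\Omega_{00}^{\lambda+1}\,\Delta_{10}$, valid for any $\ell'\in(\rho^0,\min\{2\rho^0,1\})$. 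This interval is non-empty (since $\rho^0<1$) and compatible with the prescribed range of $\eta$, so no regularity obstruction arises.

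Because $\rho\lambda\equiv 1$ and $D\circ D^{-1}(s;\mu)\equiv s$, the same coefficient-identification argument used in Lemma~\ref{Lemma7} applies: the monomial $\Upsilon_3 s^{\rho}$ cannot be absorbed into the $\mathcal{F}^\infty_{\ell'}$ remainder (as $\ell'>\rho^0$), so one is forced to conclude $\Upsilon_0=1$ and $\Upsilon_3=0$. From $\Upsilon_0=1$ we immediately obtain $\Omega_{00}=\Delta_{00}^{-1/\lambda}=\Delta_{00}^{-\rho}$, and substituting this into $\Upsilon_3=0$ and isolating $\Omega_{01}$ gives
\[
\Omega_{01}=-\frac{\Omega_{00}^{2}\,\Delta_{10}}{\lambda\,\Delta_{00}}=-\rho\,\Delta_{00}^{-(1+2\rho)}\,\Delta_{10},
\]
which is the claimed formula. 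There is no real obstacle here; the whole content of the lemma is that the roles of the ``$(1,0)$'' and ``$(0,1)$'' coefficients swap between $D$ and $D^{-1}$ when $\lambda^0>1$, and this swap is produced automatically by applying Lemma~\ref{Lemma6} to the opposite composition from the one used in Lemma~\ref{Lemma7}.
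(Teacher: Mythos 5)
Your proposal is correct and follows essentially the same route as the paper: both apply Lemma~\ref{Lemma6} to the composition $D\circ D^{-1}$ (with $D^{-1}$ playing the role of the first map, whose ratio $\rho^0<1$, and $D$ the second, with $\lambda^0>1$), then identify coefficients against the identity to get $\Upsilon_0=1$ and $\Upsilon_3=0$. Your brief justification of why the $s^\rho$ term cannot be absorbed into the remainder is a welcome elaboration of a step the paper leaves implicit.
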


\begin{proof}
    Similarly to Lemma~\ref{Lemma7}, it follows from Lemma~\ref{Lemma6} that
        \[D\circ D^{-1}(s;\mu)=s\bigl(\Upsilon_0+\Upsilon_3s^\rho+\mathcal{F}^\infty_{\ell'}(\mu_0)\bigr),\]
    for any given $\ell'\in(\rho^0,\min\{2\rho^0,1\})$, where
        \[\Upsilon_0=\Omega_{00}^\lambda\Delta_{00}, \quad \Upsilon_3=\lambda\Omega_{00}^{\lambda-1}\Delta_{00}\Omega_{01}+\Omega_{00}^{\lambda+1}\Delta_{10}.\]
    The result now follows by observing that $\Upsilon_0=1$ and $\Upsilon_3=0$.
\end{proof}

\subsection{Coefficients of the return map}

The following results present explicit formulas for the first coefficients in the asymptotic expansion of the return map $\mathscr{R}(s;\mu)$.

\begin{propo}\label{Propo:Return1-+}
 Let $\{X_\mu\}_{\mu\in\Lambda}$ be a smooth family of planar smooth vector fields having a persistent polycycle $\Gamma$ with hyperbolic saddles $p_1,\dots,p_m,p_{m+1},\dots,p_n$. Let $\mu_0\in\Lambda$ be such that $\lambda_i(\mu_0)<1$ for $i\in\{1,\dots,m\}$ and $\lambda_i(\mu_0)>1$ for $i\in\{m+1,\dots,n\}$. Then the return map of $\Gamma$ is given by
\begin{equation}\label{eq:Return2-+}.
        \mathscr{R}(s;\mu)=s^{r(\mu)}\bigl(A_{1,n}+\mathcal{A}s^{\Lambda_{0,m}}+\mathcal{F}^\infty_\ell(\mu_0)\bigr),
    \end{equation}
    for any given $\ell\in(\Lambda_{0,m}^0,\min\{r(\mu_0),2\Lambda_{0,m}^0,1\})$,
    where $\mathcal{A}=\Lambda_{m,n}A_{1,m}A_{1,n}(S_1^{m+1}-S_2^m)$.
\end{propo}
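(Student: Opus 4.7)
The plan is to decompose the return map as $\mathscr{R} = G \circ F$, where $F = D_m \circ \cdots \circ D_1$ collects the Dulac maps with $\lambda_i^0<1$ and $G = D_n \circ \cdots \circ D_{m+1}$ collects those with $\lambda_i^0>1$. Then I apply Corollary~\ref{Coro2} to $F$ and Corollary~\ref{Coro1} to $G$, obtaining
\[F(s;\mu)=s^{\Lambda_{0,m}}\bigl(A_{1,m}+C_{1,m}s^{\Lambda_{0,m}}+\mathcal{F}^\infty_{\ell_F}(\mu_0)\bigr),\quad G(s;\mu)=s^{\Lambda_{m,n}}\bigl(A_{m+1,n}+B_{m+1,n}s+\mathcal{F}^\infty_{\ell_G}(\mu_0)\bigr),\]
for appropriate $\ell_F$, $\ell_G$. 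Because $F$ has exactly the structural form of a Dulac map with hyperbolicity ratio $<1$ and $G$ that of one with ratio $>1$, I expect that composing $G\circ F$ can be carried out by an argument completely parallel to the proof of Lemma~\ref{Lemma6} (substitute $F$ into $G$, expand $F^{\Lambda_{m,n}}$ via Lemma~\ref{Lemma1}, and absorb cross terms into $\mathcal{F}^\infty_\ell(\mu_0)$ using Lemma~\ref{LemaPropFlat}).

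Carrying out that substitution yields
\[\mathscr{R}(s;\mu)=s^{\Lambda_{0,m}\Lambda_{m,n}}\Bigl(A_{1,m}^{\Lambda_{m,n}}A_{m+1,n}+\bigl[\Lambda_{m,n}A_{1,m}^{\Lambda_{m,n}-1}A_{m+1,n}\,C_{1,m}+A_{1,m}^{\Lambda_{m,n}+1}B_{m+1,n}\bigr]s^{\Lambda_{0,m}}+\mathcal{F}^\infty_\ell(\mu_0)\Bigr).\]
The exponent is $\Lambda_{0,m}\Lambda_{m,n}=\Lambda_{0,n}=r(\mu)$, and the leading coefficient simplifies using $\Lambda_{i,m}\Lambda_{m,n}=\Lambda_{i,n}$ to $A_{1,m}^{\Lambda_{m,n}}A_{m+1,n}=A_{1,n}$. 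For the next coefficient I substitute the formulas from Corollaries~\ref{Coro1} and~\ref{Coro2}, namely $C_{1,m}=A_{1,m-1}^{2\lambda_m}\Delta_{01}^m$ and $B_{m+1,n}=\Lambda_{m+1,n}(\Delta_{10}^{m+1}/\Delta_{00}^{m+1})A_{m+1,n}$, together with the relations $\Delta_{01}^m=-(\Delta_{00}^m)^2 S_2^m$ and $\Delta_{10}^{m+1}=\lambda_{m+1}\Delta_{00}^{m+1}S_1^{m+1}$ from Section~\ref{Sec:Preliminary}. The identity $A_{1,m-1}^{2\lambda_m}(\Delta_{00}^m)^2=A_{1,m}^2$ collapses $C_{1,m}$ to $-A_{1,m}^2 S_2^m$, while $B_{m+1,n}=\Lambda_{m,n}A_{m+1,n}S_1^{m+1}$. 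Factoring $\Lambda_{m,n}A_{1,m}A_{1,n}$ out of the bracket reveals it equals $\mathcal{A}=\Lambda_{m,n}A_{1,m}A_{1,n}(S_1^{m+1}-S_2^m)$, matching~\eqref{22}.

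The remaining task is bookkeeping the flatness interval. The inner composition $F$ provides a remainder in $\mathcal{F}^\infty_{\ell_F}(\mu_0)$ with $\ell_F\in(\Lambda_{0,m}^0,\min\{\Lambda_{0,m-1}^0,2\Lambda_{0,m}^0\})$, and $G$ gives $\ell_G\in(1,\min\{\lambda_{m+1}^0,2\})$; after substitution, Lemma~\ref{LemaPropFlat}(h) multiplies exponents by $\Lambda_{0,m}^0$ on the $G$ side. Tracking all combinations of Lemma~\ref{LemaPropFlat}(d,e,g,h) as in the proofs of Lemmas~\ref{Lemma5} and~\ref{Lemma6}, the worst bounds that survive are exactly those needed for $\ell\in(\Lambda_{0,m}^0,\min\{r(\mu_0),2\Lambda_{0,m}^0,1\})$.

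The main obstacle I anticipate is the flatness bookkeeping: one must verify that none of the constraints introduced by the various intermediate intervals $\ell_1,\ldots,\ell_k$ are tighter than those claimed in the statement, and in particular that the cross term $B_{m+1,n}A_{1,m}s^{\Lambda_{0,m}}$ appearing when $F$ is substituted into $G$ is genuinely of higher order than the leading term but lower order than the declared remainder. The algebraic simplification producing $\mathcal{A}$ is mechanical once the identity $\Lambda_{i,m}\Lambda_{m,n}=\Lambda_{i,n}$ is exploited and the definitions of $S_1^{m+1}$, $S_2^m$ are inserted.
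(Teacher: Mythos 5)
Your proof is correct and follows essentially the same route as the paper: split $\mathscr{R}=(D_n\circ\cdots\circ D_{m+1})\circ(D_m\circ\cdots\circ D_1)$, apply Corollary~\ref{Coro2} and Corollary~\ref{Coro1} to the two blocks, compose them mutatis mutandis via Lemma~\ref{Lemma6} since both blocks are of Dulac-type with $\Lambda_{0,m}^0<1$ and $\Lambda_{m,n}^0>1$, and then collapse $C_{1,m}=-A_{1,m}^2S_2^m$ and $B_{m+1,n}=\Lambda_{m,n}A_{m+1,n}S_1^{m+1}$ to obtain $\mathcal{A}$. The coefficient you extract, $\Lambda_{m,n}A_{1,m}^{\Lambda_{m,n}-1}A_{m+1,n}C_{1,m}+A_{1,m}^{\Lambda_{m,n}+1}B_{m+1,n}$, agrees with the paper's $\Lambda_{m,n}A_{1,n}C_{1,m}/A_{1,m}+A_{1,m}^{\Lambda_{m,n}+1}B_{m+1,n}$, and the flatness interval matches the substitution $\lambda_1\mapsto\Lambda_{0,m}$, $\lambda_2\mapsto\Lambda_{m,n}$ in Lemma~\ref{Lemma6}.
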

\begin{proof}
From Corollary~\ref{Coro2} we have that
    \begin{equation}\label{17}
        D_m\circ\ldots\circ D_1(s;\mu)=s^{\Lambda_{0,m}}\bigl(A_{1,m}+C_{1,m}s^{\Lambda_{0,m}}+\mathcal{F}^\infty_{\ell_1}(\mu_0)\bigr),
    \end{equation}
    where $\ell_1\in(\Lambda_{0,m}^0,\min\{\Lambda_{0,m-1}^0,2\Lambda_{0,m}^0\})$ and
        \[C_{j,k}=A_{j,k-1}^{2\lambda_k}\Delta_{01}^k, \quad A_{j,k}=\prod_{i=j}^{k}(\Delta_{00}^{i})^{\Lambda_{i,k}}, \quad \Lambda_{i,k}=\prod_{j=i+1}^{k}\lambda_j,\;\Lambda_{kk}=1.\]
    Moreover from Corollary~\ref{Coro1} we have that
    \begin{equation}\label{18}
        D_n\circ\ldots\circ D_{m+1}(s;\mu)=s^{\Lambda_{m,n}}\bigl(A_{m+1,n}+B_{m+1,n}s+\mathcal{F}^\infty_{\ell_{m+1}}(\mu_0)\bigr),
    \end{equation}
    with $\ell_{m+1}\in(1,\min\{\lambda_{m+1}^0,2\})$, where 
        \[B_{j,k}=\Lambda_{j,k}\frac{\Delta_{10}^j}{\Delta_{00}^j}A_{j,k}, \quad A_{j,k}=\prod_{i=j}^{k}(\Delta_{00}^{i})^{\Lambda_{i,k}}, \quad \Lambda_{i,k}=\prod_{j=i+1}^{k}\lambda_j,\;\Lambda_{kk}=1.\]
    Since \eqref{17} and \eqref{18} are of \emph{Dulac-type} (i.e. it has similar expression), it follows mutatis mutandis from Lemma~\ref{Lemma6} that
    \[\begin{array}{ll}
        \mathscr{R}(s;\mu) &= (D_n\circ\ldots\circ D_{m+1})\circ(D_m\circ\ldots\circ D_1)(s;\mu) \vspace{0.2cm} \\
        &=s^{r}\big(A_{1,m}^{\Lambda_{m,n}}A_{m+1,n}+\mathcal{A}s^{\Lambda_{0,m}}+\mathcal{F}^\infty_{\ell}(\mu_0)\bigr) \vspace{0.2cm} \\ 

        &=s^{r}\big(A_{1,n}+\mathcal{A}s^{\Lambda_{0,m}}+\mathcal{F}^\infty_{\ell}(\mu_0)\bigr),
    \end{array}\]
    where
        \[\begin{array}{ll}
            \mathcal{A} &\displaystyle= \Lambda_{m,n}A_{1,n}\frac{C_{1,m}}{A_{1,m}}+A_{1,m}^{\Lambda_{m,n}+1}B_{m+1,n} \vspace{0.2cm} \\
            
            &\displaystyle= \Lambda_{m,n}A_{1,n}\frac{A_{1,m-1}^{2\lambda_m}}{A_{1,m}}\Delta_{0,1}^m+\Lambda_{m+1,n}A_{1,m}^{\Lambda_{m,n}}A_{m+1,n}A_{1,m}\frac{\Delta_{10}^{m+1}}{\Delta_{00}^{m+1}} \vspace{0.2cm} \\

            &\displaystyle=-\Lambda_{m,n}A_{1,n}\frac{(A_{1,m-1}^{\lambda_m}\Delta_{00}^m)^2}{A_{1,m}}S_2^m+\lambda_m\Lambda_{m+1,n}A_{1,n}A_{1,m}S_1^{m+1} \vspace{0.2cm} \\

            &\displaystyle= \Lambda_{m,n}A_{1,n}A_{1,m}(S_1^{m+1}-S_2^m),
    \end{array}\]
    and $\ell\in(\Lambda_{0,m}^0,\min\{r(\mu_0),2\Lambda_{0,m}^0,1\})$.
\end{proof}

\begin{propo}\label{Propo:Return1+-}
    Let $\{X_\mu\}_{\mu\in\Lambda}$ be a smooth family of planar smooth vector fields having a persistent polycycle $\Gamma$ with hyperbolic saddles $p_1,\dots,p_m,p_{m+1},\dots,p_n$. Let $\mu_0\in\Lambda$ be such that $\lambda_i(\mu_0)>1$ for $i\in\{1,\dots,m\}$ and $\lambda_i(\mu_0)<1$ for $i\in\{m+1,\dots,n\}$. Then the return map of $\Gamma$ is given by
    \begin{equation}\label{eq:Return2+-}
        \mathscr{R}(s;\mu)=\left\{\begin{array}{ll}
            s^{r(\mu)}(A_{1,n}+\mathcal{B}s+\mathcal{F}^\infty_\ell(\mu_0)), & \text{if } r(\mu_0)>1, \vspace{0.2cm} \\
            
            s^{r(\mu)}(A_{1,n}+\mathcal{A}_\omega s+\mathcal{F}^\infty_{\ell'}(\mu_0)), & \text{if } r(\mu_0)=1, \vspace{0.2cm} \\
            
            s^{r(\mu)}(A_{1,n}+\mathcal{C} s^{r(\mu)}+\mathcal{F}^\infty_{\ell''}(\mu_0)), & \text{if } r(\mu_0)<1, 
        \end{array}\right.
    \end{equation}
    for any given 
        \[\ell\in(1,\min\{r(\mu_0),2\}), \quad \ell'\in(1,\min\{\Lambda_{0,m}^0,2\}), \quad \ell''\in(r(\mu_0),\min\{2r(\mu_0),1\});\]
    where 
        \[\mathcal{B}=r(\mu)A_{1,n}S_1^1, \quad \mathcal{C}=-A_{1,n}^2S_2^n, \quad \mathcal{A}_\omega=\mathcal{B}+\bigl(1+\alpha\omega(s;\alpha)\bigr)\mathcal{C},\]
    $\alpha=1-r(\mu)$, $\lambda_i^0=\lambda_i(\mu_0)$ and $r(\mu)=\lambda_1(\mu)\dots\lambda_n(\mu)$.
\end{propo}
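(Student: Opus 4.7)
The plan mirrors that of Proposition~\ref{Propo:Return1-+}, except that the effective hyperbolicity ratios of the two composed blocks lie on opposite sides of $1$ in the \emph{opposite} order. As a consequence, the final composition falls in the setup of Lemma~\ref{Lemma5} rather than Lemma~\ref{Lemma6}, and the three subcases of Lemma~\ref{Lemma5} will directly give the three subcases of~\eqref{eq:Return2+-}.

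First I would apply Corollary~\ref{Coro1} to the block of saddles with $\lambda_i^0>1$ to obtain
\[D_m\circ\cdots\circ D_1(s;\mu)=s^{\Lambda_{0,m}}\bigl(A_{1,m}+B_{1,m}s+\mathcal{F}^\infty_{\ell_1}(\mu_0)\bigr),\]
and Corollary~\ref{Coro2} to the block with $\lambda_i^0<1$ to obtain
\[D_n\circ\cdots\circ D_{m+1}(s;\mu)=s^{\Lambda_{m,n}}\bigl(A_{m+1,n}+C_{m+1,n}s^{\Lambda_{m,n}}+\mathcal{F}^\infty_{\ell_2}(\mu_0)\bigr).\]
Both blocks are of Dulac-type and their effective hyperbolicity ratios satisfy $\Lambda_{0,m}^0>1$ and $\Lambda_{m,n}^0<1$. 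Therefore Lemma~\ref{Lemma5} applies \emph{mutatis mutandis} to $\mathscr{R}=(D_n\circ\cdots\circ D_{m+1})\circ(D_m\circ\cdots\circ D_1)$, producing the three expressions in~\eqref{eq:Return2+-} according to the sign of $r(\mu_0)-1=\Lambda_{0,m}^0\Lambda_{m,n}^0-1$, with $\alpha=1-\Lambda_{0,m}\Lambda_{m,n}=1-r(\mu)$ in the critical case. The flatness intervals $\ell,\ell',\ell''$ are obtained by substituting the effective ratios into the intervals supplied by Lemma~\ref{Lemma5}.

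The remaining work is to identify the coefficients $\mathcal{B}$ and $\mathcal{C}$ with the quantities $\Upsilon_1$ and $\Upsilon_2$ provided by Lemma~\ref{Lemma5}. The substitutions $\Delta_{00}^1\mapsto A_{1,m}$, $\Delta_{10}^1\mapsto B_{1,m}$, $\Delta_{00}^2\mapsto A_{m+1,n}$, $\Delta_{01}^2\mapsto C_{m+1,n}$, $\lambda_2\mapsto\Lambda_{m,n}$ give
\[\mathcal{B}=\Lambda_{m,n}A_{1,m}^{\Lambda_{m,n}-1}A_{m+1,n}B_{1,m}, \qquad \mathcal{C}=A_{1,m}^{2\Lambda_{m,n}}C_{m+1,n}.\]
Using $A_{1,n}=A_{1,m}^{\Lambda_{m,n}}A_{m+1,n}$, the definition $B_{1,m}=\Lambda_{1,m}\frac{\Delta_{10}^1}{\Delta_{00}^1}A_{1,m}$, and the identity $\Delta_{10}^1=\lambda_1\Delta_{00}^1 S_1^1$ together with $\Lambda_{m,n}\Lambda_{1,m}\lambda_1=r(\mu)$, the first expression collapses to $\mathcal{B}=r(\mu)A_{1,n}S_1^1$. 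For $\mathcal{C}$, the recursion $A_{m+1,n}=\Delta_{00}^n A_{m+1,n-1}^{\lambda_n}$ yields $A_{m+1,n-1}^{2\lambda_n}(\Delta_{00}^n)^2=A_{m+1,n}^2$, which combined with $\Delta_{01}^n=-(\Delta_{00}^n)^2 S_2^n$ gives $\mathcal{C}=-A_{1,n}^2 S_2^n$. In the critical case $r(\mu_0)=1$, the expression $\mathcal{A}_\omega=\mathcal{B}+(1+\alpha\omega(s;\alpha))\mathcal{C}$ is immediate from the compensator output of Lemma~\ref{Lemma5}.

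The main obstacle I expect is purely bookkeeping: correctly threading the $\Lambda_{i,k}$-exponents through the iterated composition and recognising the telescoping identities for $A_{1,n}$ that collapse the intermediate expressions into the clean forms stated in the proposition; the analytic content is already packaged in Lemma~\ref{Lemma5} and the two corollaries.
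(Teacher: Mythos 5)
Your proposal follows exactly the paper's own route: Corollary~\ref{Coro1} for the block with $\lambda_i^0>1$, Corollary~\ref{Coro2} for the block with $\lambda_i^0<1$, Lemma~\ref{Lemma5} applied \emph{mutatis mutandis} to the two Dulac-type factors, and the same telescoping identities ($A_{1,n}=A_{1,m}^{\Lambda_{m,n}}A_{m+1,n}$, $A_{j,k}=A_{j,k-1}^{\lambda_k}\Delta_{00}^k$, $\Delta_{10}^1=\lambda_1\Delta_{00}^1S_1^1$, $\Delta_{01}^n=-(\Delta_{00}^n)^2S_2^n$) to collapse $\mathcal{B}$ and $\mathcal{C}$ to the stated forms. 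The argument and the coefficient computations are correct and coincide with the paper's proof.
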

\begin{proof}
Given $\ell_1\in(1,\min\{\lambda_1^0,2\})$ it follows from Corollary~\ref{Coro1} that 
    \begin{equation}\label{10}
        D_m\circ\dots\circ D_1(s;\mu)=s^{\Lambda_{0,m}}\bigl(A_{1,m}+B_{1,m} s+\mathcal{F}^{\infty}_{\ell_1}(\mu_0)\bigr),
    \end{equation}
    where we recall that 
        \[B_{j,k}=\Lambda_{j,k}\frac{\Delta_{10}^j}{\Delta_{00}^j}A_{j,k}, \quad A_{j,k}=\prod_{i=j}^{k}(\Delta_{00}^{i})^{\Lambda_{i,k}}, \quad \Lambda_{i,k}=\prod_{j=i+1}^{k}\lambda_j,\;\Lambda_{kk}=1.\]
    From Corollary~\ref{Coro2} we have that
    \begin{equation}\label{10x}
        D_n\circ\dots\circ D_{m+1}(s;\mu)=s^{\Lambda_{m,n}}\bigl(A_{m+1,n}+C_{m+1,n} s^{\Lambda_{m,n}}+\mathcal{F}^{\infty}_{\ell_n}(\mu_0)\bigr),
    \end{equation}
    where $\ell_n\in(\Lambda_{m,n}^0,\min\{\Lambda_{m,n-1}^0,2\Lambda_{m,n}^0\})$ and $C_{j,k}=A_{j,k-1}^{2\lambda_k}\Delta_{01}^k$.
    
    Since \eqref{10} and \eqref{10x} are of Dulac-type and $r(\mu_0)=1$, it follows mutatis mutandis from Lemma~\ref{Lemma5} that the first return map 
        \[\mathscr{R}(s;\mu)=(D_n\circ\dots\circ D_{m+1})\circ (D_m\circ\dots\circ D_1)(s;\mu),\]
    is given by 
    \begin{equation}\label{11}
        \mathscr{R}(s;\mu)=\left\{\begin{array}{ll}
            s^{r(\mu)}\bigl(A_{1,n}+\mathcal{B}s+\mathcal{F}^\infty_\ell(\mu_0)\bigr), & \text{if } r(\mu_0)>1, \vspace{0.2cm} \\
            
            s^{r(\mu)}\bigl(A_{1,n}+\mathcal{A}_\omega s+\mathcal{F}^\infty_{\ell'}(\mu_0)\bigr), & \text{if } r(\mu_0)=1, \vspace{0.2cm} \\
            
            s^{(\mu)}\bigl(A_{1,n}+\mathcal{C} s^{r(\mu)}+\mathcal{F}^\infty_{\ell''}(\mu_0)\bigr), & \text{if } r(\mu_0)<1, 
        \end{array}\right.
    \end{equation}
    for any given 
        \[\ell\in(1,\min\{r(\mu_0),2\}), \quad \ell'\in(1,\min\{\Lambda_{0,m}^0,2\}), \quad \ell''\in(r(\mu_0),\min\{2r(\mu_0),1\});\]
    where
        \begin{equation}\label{12}
        \begin{array}{l}
            \displaystyle \mathcal{A}_\omega=\mathcal{B}+\bigl(1+\alpha\omega(s;\alpha)\bigr)\mathcal{C}, \vspace{0.2cm} \\
            
            \displaystyle \mathcal{B}=\Lambda_{m,n}A_{1,m}^{\Lambda_{m,n}}A_{m+1,n}\frac{B_{1,m}}{A_{1,m}}=\Lambda_{1,m}\Lambda_{m,n}A_{1,n}\frac{\Delta_{10}^1}{\Delta_{00}^1}=r(\mu)A_{1,n}S_1^1, \vspace{0.2cm} \\

            \displaystyle \mathcal{C}=A_{1,m}^{2\Lambda_{m,n}}C_{m+1,n}=A_{1,m}^{2\Lambda_{m,n}}A_{m+1,n-1}^{2\lambda_n}\Delta_{01}^n=-\bigl(A_{1,m}^{\Lambda_{m,n}}A_{m+1,n-1}^{\lambda_n}\Delta_{00}^n\bigr)^2S_2^n=-A_{1,n}^2S_2^n.
        \end{array}
        \end{equation}
    and $\alpha=1-r(\mu)$. The results now follows from \eqref{11} and \eqref{12}.
\end{proof}

\begin{obs}\label{R2}
    Although the expressions of the return map in Proposition \ref{Propo:Return1-+} and \ref{Propo:Return1+-} are different, the situation they described is the same. Indeed, under the hypothesis of Proposition \ref{Propo:Return1+-}, one can always relabel the corners of $\Gamma$ so that the first saddles begin with $\lambda_i(\mu_0)<1$, see Figure~\ref{Fig2}. 
\end{obs}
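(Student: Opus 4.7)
The plan is to reduce the situation of Proposition~\ref{Propo:Return1+-} to that of Proposition~\ref{Propo:Return1-+} by performing a cyclic shift of the labels of the saddles, equivalently by changing the transverse section on which the first return map is computed. Since the cyclicity of $\Gamma$ at $\mu_0$ is defined in terms of the number of limit cycles of $X_\mu$ lying in a Hausdorff neighborhood of $\Gamma$, it depends only on $\Gamma$ itself and not on which section $\Sigma_i$ is used to define $\mathscr R$; hence a cyclic relabeling of the corners cannot alter it. This will be enough to conclude that the two propositions describe the same dynamical scenario.

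Concretely, I would define the new labeling $\tilde p_i := p_{m+i}$ for $i\in\{1,\dots,n-m\}$ and $\tilde p_i := p_{i-(n-m)}$ for $i\in\{n-m+1,\dots,n\}$, with the transverse sections $\tilde\Sigma_i$ and Dulac maps $\tilde D_i$ relabeled accordingly. By construction one has $\tilde\lambda_i(\mu_0)<1$ for $i\leqslant n-m$ and $\tilde\lambda_i(\mu_0)>1$ for $i>n-m$, so the relabeled family meets the hypotheses of Proposition~\ref{Propo:Return1-+} with $n-m$ playing the role of $m$. This is the cyclic shift illustrated in Figure~\ref{Fig2}.

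The next step is to relate the original return map $\mathscr R$ on $\Sigma_1$ to the relabeled return map $\tilde{\mathscr R}$ on $\tilde\Sigma_1 = \Sigma_{m+1}$. Set $T := D_m\circ\cdots\circ D_1$ for the transition from $\Sigma_1$ to $\Sigma_{m+1}$ and $U := D_n\circ\cdots\circ D_{m+1}$ for the complementary transition. Then $\mathscr R = U\circ T$ and $\tilde{\mathscr R} = T\circ U$, which yields the conjugacy
\begin{equation*}
T\circ\mathscr R \;=\; \tilde{\mathscr R}\circ T.
\end{equation*}
Because $T$ is a local diffeomorphism from a right-neighborhood of $0$ in $\Sigma_1$ to a right-neighborhood of $0$ in $\Sigma_{m+1}$ (in particular $T(s)\to 0$ as $s\to 0^+$), the map $s\mapsto T(s)$ induces a bijection between fixed points of $\mathscr R$ and fixed points of $\tilde{\mathscr R}$ that preserves proximity to $\Gamma$. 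Consequently ${\rm Cycl}(\Gamma,\mu_0)$ is the same whether computed from $\mathscr R$ or from $\tilde{\mathscr R}$, and Proposition~\ref{Propo:Return1-+} applied to the relabeled polycycle governs the bifurcation of limit cycles just as~\eqref{eq:Return2+-} does.

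The only delicate point I foresee is reconciling the three sub-cases $r(\mu_0)>1$, $r(\mu_0)=1$, $r(\mu_0)<1$ of Proposition~\ref{Propo:Return1+-} with the single expansion of Proposition~\ref{Propo:Return1-+}. After relabeling, the exponents $s^1$, $s^{r(\mu)}$, and the compensator-weighted combination appearing in $\mathcal{A}_\omega$ must all be captured by the single next-order term $\tilde{\mathcal A}\,s^{\tilde\Lambda_{0,n-m}}$ of~\eqref{eq:Return1-+}, since for the relabeled system $\tilde\Lambda_{0,n-m}=\lambda_{m+1}\cdots\lambda_n$ is automatically the smallest competing exponent. Verifying this (and that $\tilde{\mathcal A}$ matches $\mathcal{B}$, $\mathcal{C}$, $\mathcal{A}_\omega$ pulled back by $T$) is a routine bookkeeping step using the identities $\Delta_{10}^i = \lambda_i\Delta_{00}^iS_1^i$, $\Delta_{01}^i = -(\Delta_{00}^i)^2 S_2^i$ and the cyclic permutation of indices in the products defining $A_{1,n}$ and $\Lambda_{i,k}$; I expect this to be the main, but entirely mechanical, technical check.
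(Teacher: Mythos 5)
Your proposal is correct and fleshes out what the paper leaves implicit: the paper itself only asserts the remark via a relabeling picture (Figure~\ref{Fig2}) without supplying an argument, and your cyclic-shift plus conjugacy identity $T\circ\mathscr R=\tilde{\mathscr R}\circ T$ is exactly the right way to make that picture rigorous. The invariance of ${\rm Cycl}(\Gamma,\mu_0)$ under change of section follows from the fact that $T$ is a homeomorphism onto its image sending $0^+$ to $0^+$, so fixed points of $\mathscr R$ and $\tilde{\mathscr R}$ correspond bijectively while preserving Hausdorff proximity to $\Gamma$. Your identification of the index map $\tilde p_i=p_{m+i\pmod n}$ matches the paper's figure, and the observation that $\tilde S_1^{\tilde m+1}=S_1^{1}$ and $\tilde S_2^{\tilde m}=S_2^{n}$ is exactly why the non-leading coefficients in~\eqref{eq:Return2-+} and~\eqref{eq:Return2+-} both read off $S_1$ at the saddle just after $\Sigma_b$ and $S_2$ at the saddle just before it (the paper points this out in the caption to Figure~\ref{Fig2}).

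One point to tighten: you write that $\tilde{\mathcal A}$ should match $\mathcal B,\mathcal C,\mathcal A_\omega$ ``pulled back by $T$,'' but the relevant fact is weaker and cleaner. For the remark you only need that the varieties cut out by the leading coefficients agree, not that the coefficients themselves transform equivariantly. Indeed the coefficients $\Delta_{jk}^i$, $S_j^i$ are section-dependent, so $\tilde{\mathcal A}$ and $\mathcal B$, $\mathcal C$ are literally different functions of $\mu$; what the conjugacy gives you is that $\tilde{\mathcal A}(\mu_0)\neq 0$ is equivalent to the corresponding nonvanishing condition in the $\Sigma_a$-description, because a bound on the number of fixed points of $\tilde{\mathscr R}$ transfers verbatim to $\mathscr R$ and conversely. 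This is the same phenomenon the paper formalizes in Section~\ref{Sec:displacement2} for the displacement map, where it checks $V(\Phi_1)=V(\Psi_1)$, etc., rather than $\Phi_i=\Psi_i$. Stating the invariance at the level of zero sets rather than coefficient identities would close the one soft spot in your argument.
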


\begin{figure}[ht]
	\begin{center}
		\begin{minipage}{8cm}
			\begin{center} 
			 	\begin{overpic}[width=6cm]{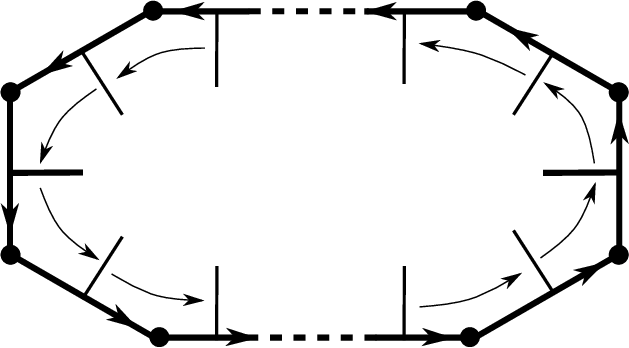} 
				    \put(98,45){$p_1$}
                        \put(78,55){$p_2$}
                        \put(18,57){$p_{m-1}$}
                        \put(-3,45){$p_m$}
                        \put(-8,10){$p_{m+1}$}
                        \put(20,-3){$p_{m+2}$}
                        \put(72,-4){$p_{n-1}$}
                        \put(98,9){$p_n$}
                        \put(10,31){$D^m$}
                       \put(10,20){$D^{m+1}$}
                       \put(100,25.5){$\Sigma_a$}
                       \put(-8,25.5){$\Sigma_b$}
			 	\end{overpic}
				
		  		$(a)$
			\end{center}
		\end{minipage}
		\begin{minipage}{8cm}
			\begin{center} 
			 	\begin{overpic}[width=6cm]{Fig3.eps} 
			     	\put(97,45){$p_{m+1}$}
                        \put(78,55){$p_{m+2}$}
                        \put(18,57){$p_{n-1}$}
                        \put(-3,45){$p_n$}
                        \put(-3,10){$p_1$}
                        \put(22,-4){$p_2$}
                        \put(71,-4){$p_{m-1}$}
                        \put(97,9){$p_m$}
                        \put(10,31){$D^n$}
                       \put(10,20){$D^1$}
                       \put(100,25.5){$\Sigma_a$}
                       \put(-8,25.5){$\Sigma_b$}
		      	\end{overpic}
				
		  		$(b)$
		  	\end{center}
		\end{minipage}
	\end{center}
	\caption{Illustration of the equivalence between the hypothesis of $(a)$ Proposition~\ref{Propo:Return1-+} (with the indexation starting at $\Sigma_a$) and $(b)$ Proposition~\ref{Propo:Return1+-} (with the indexation starting at $\Sigma_b$). Observe that regardless of the indexation, the only Dulac maps whose the non-leader term appears in the expressions of~\eqref{eq:Return2-+} and~\eqref{eq:Return2+-} are those defined near the transversal $\Sigma_b$.}\label{Fig2}
\end{figure}

\section{The displacement map of a persistent polycycle}\label{Sec:displacement1}

Consider a persistent polycycle $\Gamma$ of a smooth family $\{X_\mu\}_{\mu\in\Lambda}$ of planar smooth vector fields with hyperbolic saddles $p_1,\dots,p_m,p_{m+1},\dots p_n$. Throughout the text of the present paper, we dealt with the return map, as its isolated fixed points correspond to limit cycles in a neighborhood of $\Gamma$. However, there are particular situations (see, for instance~\cite{MarVilKolmogorov}) where it is more convenient to work with the difference of the transition maps from $p_1$ to $p_m$ and from $p_{n}$ to $p_{m+1}$, with the last one following the solution of $\{-X_\mu\}_{\mu\in\Lambda}$. More precisely, the following displacement map:
    \begin{equation}\label{23}
        \mathscr{D}(s;\mu)=D_m\circ\dots\circ D_1(s;\mu)-D_{m+1}^{-1}\circ\dots\circ D_{n}^{-1}(s;\mu).
    \end{equation}
See Figure~\ref{Fig3}. 
\begin{figure}[ht]
	\begin{center}
		\begin{minipage}{8cm}
			\begin{center} 
			 	\begin{overpic}[width=6cm]{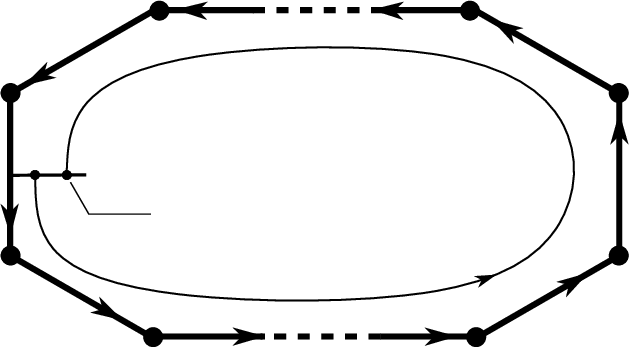} 
                        \put(97,45){$p_{m+1}$}
                        \put(78,55){$p_{m+2}$}
                        \put(18,57){$p_{n-1}$}
                        \put(-3,45){$p_n$}
                        \put(-3,10){$p_1$}
                        \put(22,-4){$p_2$}
                        \put(71,-4){$p_{m-1}$}
                        \put(97,9){$p_m$}
                       \put(-8,25.5){$\Sigma_b$}
                       \put(4.5,29){$s$}
                       \put(25,19){$\mathscr{R}(s)$}
			 	\end{overpic}
				
		  		$(a)$
			\end{center}
		\end{minipage}
		\begin{minipage}{8cm}
			\begin{center} 
			 	\begin{overpic}[width=6cm]{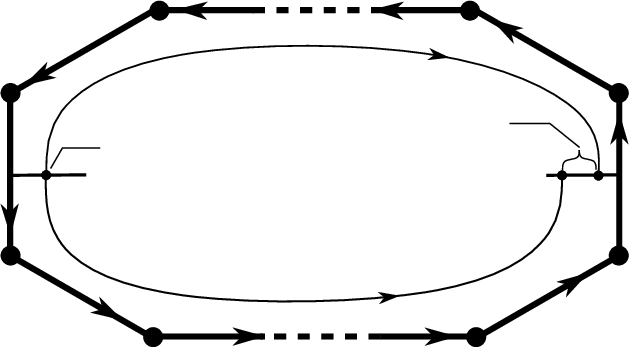} 
                        \put(97,45){$p_{m+1}$}
                        \put(78,55){$p_{m+2}$}
                        \put(18,57){$p_{n-1}$}
                        \put(-3,45){$p_n$}
                        \put(-3,10){$p_1$}
                        \put(22,-4){$p_2$}
                        \put(71,-4){$p_{m-1}$}
                        \put(97,9){$p_m$}
                       \put(100,25.5){$\Sigma_a$}
                       \put(-8,25.5){$\Sigma_b$}
                       \put(16.5,30.25){$s$}
                       \put(65,34){$\mathscr{D}(s)$}
		      	\end{overpic}
				
		  		$(b)$
		  	\end{center}
		\end{minipage}
	\end{center}
	\caption{Illustration of $(a)$ the first return map $\mathscr{R}$ and $(b)$ the displacement map $\mathscr{D}$.}\label{Fig3}
\end{figure}
Observe that both approaches are equivalent, since
\begin{eqnarray*}
    \mathscr{R}(s;\mu)=s&\iff& D_n\circ\dots\circ D_{m+1}\circ D_m\circ\dots\circ D_1(s;\mu)=Id(s)\\
    &\iff&  D_m\circ\dots\circ D_1(s;\mu) = \left(D_n\circ\dots\circ D_{m+1}(s;\mu)\right)^{-1}\nonumber\\
    &\iff& \mathscr{D}(s;\mu)=0.
\end{eqnarray*}
For these situations, similar to Propositions~\ref{Propo:Return1-+} and~\ref{Propo:Return1+-}, the next result determines the leading terms of the displacement map $\mathscr{D}(s;\mu)$.

\begin{propo}\label{Propo:displacement+-}
    Let $\{X_\mu\}_{\mu\in\Lambda}$ be a smooth family of planar smooth vector fields having a persistent polycycle $\Gamma$ with hyperbolic saddles $p_1,\dots,p_m,p_{m+1},\dots,p_n$. Let $\mu_0\in\Lambda$ be such that $\lambda_i(\mu_0)>1$ for $i\in\{1,\dots,m\}$ and $\lambda_i(\mu_0)<1$ for $i\in\{m+1,\dots,n\}$. Then the displacement map of $\Gamma$ is given by
    \begin{equation}
        \mathscr{D}(s;\mu)=\left\{\begin{array}{ll}
            A_{1,m}s^{\Lambda_{0,m}}-A_{m+1,n}^*s^{\Lambda_{m,n}^{-1}}+\mathcal{F}^\infty_{\ell}(\mu_0), & \text{if } r(\mu_0)\neq1, \vspace{0.2cm} \\
            
            s^{\Lambda_{m,n}^{-1}}U(s;\mu)                      \bigl(\Psi_1\omega(s;\alpha)+\Psi_2+\Psi_3s+\mathcal{F}^\infty_{\ell'}(\mu_0)\bigr), & \text{if } r(\mu_0)=1,
        \end{array}\right.
    \end{equation}
    for any given 
        \[\ell\in(\max\{\Lambda_{0,m}^0,(\Lambda_{m,n}^0)^{-1}\},\min\{\Lambda_{0,m}^0+1,(\Lambda_{m,n}^0)^{-1}+1\}), \quad \ell'\in(1,\min\{\lambda_1^0,(\lambda_n^0)^{-1},2\}),\]
    where 
    \begin{equation}
        \Psi_1=\alpha A_{1,m}, \quad \Psi_2=A_{1,m}-A_{m+1,n}^*, \quad \Psi_3=A_{m+1,n}^*\bigl(\Lambda_{0,m}S_1^1-\Lambda_{m,n}^{-1}S_2^n),
    \end{equation}
    $\alpha=\Lambda_{m,n}^{-1}-\Lambda_{0,m}$ and $U(s;\mu)=1+\Lambda_{0,m}S_1^1s+\mathcal{F}^\infty_{\ell'}(\mu_0)$.
\end{propo}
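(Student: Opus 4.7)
The plan is to compute each of the two compositions appearing in $\mathscr{D}(s;\mu) = D_m \circ \cdots \circ D_1(s;\mu) - D_{m+1}^{-1} \circ \cdots \circ D_n^{-1}(s;\mu)$ separately---both are of Dulac-type with hyperbolicity ratios bigger than $1$ at $\mu_0$---then subtract and, in the resonant case, repackage the result using the \'Ecalle--Roussarie compensator. For the direct composition, since $\lambda_i^0>1$ for $i\in\{1,\dots,m\}$, Corollary~\ref{Coro1} yields
\[
D_m\circ\cdots\circ D_1(s;\mu)=s^{\Lambda_{0,m}}\bigl(A_{1,m}+B_{1,m}\,s+\mathcal{F}^\infty_{\ell_1}(\mu_0)\bigr),\qquad \ell_1\in(1,\min\{\lambda_1^0,2\}),
\]
and substituting $\Delta_{10}^i=\lambda_i\Delta_{00}^i S_1^i$ into the expression of $B_{j,k}$ gives the identity $B_{1,m}=A_{1,m}\Lambda_{0,m}S_1^1$, so that $A_{1,m}+B_{1,m}s=A_{1,m}\,U(s;\mu)$ with $U$ as in the statement.

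For the inverse composition, since $\lambda_i^0<1$ for $i\in\{m+1,\dots,n\}$, Lemma~\ref{Lemma7} applies to every $D_i^{-1}$ and produces an expansion of the form $s^{\rho_i}\bigl(\Omega_{00}^i+\Omega_{10}^i s+\mathcal{F}^\infty_\eta(\mu_0)\bigr)$ whose hyperbolicity ratio $\rho_i=\lambda_i^{-1}$ satisfies $\rho_i^0>1$. A second application of Corollary~\ref{Coro1}, to the re-indexed composition $\widetilde D_j:=D_{n-j+1}^{-1}$ (so that $\widetilde D_1=D_n^{-1}$ is applied first and $\widetilde D_{n-m}=D_{m+1}^{-1}$ last), then gives
\[
D_{m+1}^{-1}\circ\cdots\circ D_n^{-1}(s;\mu)=s^{\Lambda_{m,n}^{-1}}\bigl(A_{m+1,n}^*+B_{m+1,n}^*\,s+\mathcal{F}^\infty_{\ell_2}(\mu_0)\bigr),\qquad \ell_2\in\bigl(1,\min\{(\lambda_n^0)^{-1},2\}\bigr),
\]
where $A_{m+1,n}^*$ is the natural analog of $A_{j,k}$ built from the $\Omega_{00}^i$. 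The explicit formula of Lemma~\ref{Lemma7} together with $\Delta_{01}^i=-(\Delta_{00}^i)^2 S_2^i$ yield $\Omega_{10}^i/\Omega_{00}^i=\rho_i S_2^i$; an identical computation to Step~1 produces $B_{m+1,n}^*=A_{m+1,n}^*\,\Lambda_{m,n}^{-1}\,S_2^n$.

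Subtracting the two expansions: if $r(\mu_0)\neq 1$ then $\Lambda_{0,m}^0\neq (\Lambda_{m,n}^0)^{-1}$ and the first claim follows at once, all $s$-linear corrections and flat remainders being absorbed into a single $\mathcal{F}^\infty_\ell(\mu_0)$-class via Lemma~\ref{LemaPropFlat}(d,e,g). If $r(\mu_0)=1$, then $\alpha(\mu_0)=0$ and $\Lambda_{0,m}^0=(\Lambda_{m,n}^0)^{-1}$; we write $s^{\Lambda_{0,m}}=s^{\Lambda_{m,n}^{-1}}s^{-\alpha}$, use the \'Ecalle--Roussarie identity $s^{-\alpha}=1+\alpha\,\omega(s;\alpha)$ from Definition~\ref{Def8}, and factor out $s^{\Lambda_{m,n}^{-1}}\,U(s;\mu)$. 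Inverting $U$ on the second summand is legitimate by Lemma~\ref{Lemma1}, which provides $U(s;\mu)^{-1}=1-\Lambda_{0,m}S_1^1\,s+\mathcal{F}^\infty_{\ell'}(\mu_0)$. Multiplying out and collecting the coefficients of $\omega(s;\alpha)$, $1$ and $s$ inside the remaining bracket yields exactly $\Psi_1=\alpha A_{1,m}$, $\Psi_2=A_{1,m}-A_{m+1,n}^*$, and $\Psi_3=A_{m+1,n}^*\Lambda_{0,m}S_1^1-B_{m+1,n}^*=A_{m+1,n}^*(\Lambda_{0,m}S_1^1-\Lambda_{m,n}^{-1}S_2^n)$, completing the resonant case.

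The main obstacle is not algebraic but combinatorial: tracking the admissible range of $\ell$ and $\ell'$ through every use of Lemma~\ref{LemaPropFlat}(g,h), Lemma~\ref{Lemma1}, and the compensator estimates of Lemma~\ref{Lema:omega}. Verifying that the intersection of all exponent constraints collapses precisely to $(\max\{\Lambda_{0,m}^0,(\Lambda_{m,n}^0)^{-1}\},\min\{\Lambda_{0,m}^0+1,(\Lambda_{m,n}^0)^{-1}+1\})$ in the non-resonant case, and to $(1,\min\{\lambda_1^0,(\lambda_n^0)^{-1},2\})$ in the resonant case---rather than to something strictly smaller---is the delicate bookkeeping, entirely analogous to the exponent-juggling already carried out in the proofs of Lemmas~\ref{Lemma3}--\ref{Lemma6}.
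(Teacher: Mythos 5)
Your proposal is correct and follows essentially the same route as the paper's proof: Corollary~\ref{Coro1} applied to the direct composition and (after Lemma~\ref{Lemma7} and re-indexing) to the inverse composition, the identities $\Delta_{10}^i=\lambda_i\Delta_{00}^iS_1^i$ and $\Delta_{01}^i=-(\Delta_{00}^i)^2S_2^i$ to express $B_{1,m}$ and $B_{m+1,n}^*$, and then factoring out $s^{\Lambda_{m,n}^{-1}}U(s;\mu)$ with $s^{-\alpha}=1+\alpha\omega(s;\alpha)$ in the resonant case. The only cosmetic difference is that you invert $U$ via Lemma~\ref{Lemma1} where the paper invokes the Generalized Binomial Theorem directly; the resulting coefficients $\Psi_1,\Psi_2,\Psi_3$ and the admissible ranges for $\ell,\ell'$ agree with the paper's.
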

\begin{proof}
    For $i\in\{1,\dots,m\}$ let
        \[D_i(s;\mu)=s^{\lambda_i}\bigl(\Delta_{00}^i+\Delta_{10}^i s+\mathcal{F}^\infty_{\ell_i}(\mu_0)\bigr),\]
    with $\ell_i\in(1,\min\{\lambda_i,2\})$. It follows from Corollary~\ref{Coro1} that
    \begin{equation}\label{13}
        D_m\circ\dots\circ D_1(s;\mu)=s^{\Lambda_{0,m}}\bigl(A_{1,m}+B_{1,m}s+\mathcal{F}^\infty_{\ell_1}(\mu_0)\bigr),
    \end{equation}
    where we recall that,
      \[B_{j,k}=\Lambda_{j,k}\frac{\Delta_{10}^j}{\Delta_{00}^j}A_{j,k}, \quad A_{j,k}=\prod_{i=j}^{k}(\Delta_{00}^{i})^{\Lambda_{i,k}}, \quad \Lambda_{i,k}=\prod_{j=i+1}^{k}\lambda_j,\;\Lambda_{kk}=1.\]   
    For $i\in\{m+1,\dots,n\}$ let
        \[D_i(s;\mu)=s^{\lambda_i}\bigl(\Delta_{00}^i+\Delta_{01}^is^{\lambda_i}+\mathcal{F}^\infty_{\ell_i}(\mu_0)\bigr), \quad D_i^{-1}(s;\mu)=s^{\lambda_i^{-1}}\bigl(\Omega_{00}^i+\Omega_{10}^is+\mathcal{F}^\infty_{\eta_i}(\mu_0)\bigr),\]
    with $\ell_i\in(\lambda_i^0,\min\{2\lambda_i^0,1\})$ and $\eta_i\in(1,\min\{(\lambda_i^0)^{-1},2\})$. From Corollary~\ref{Coro1} we have 
    \begin{equation}\label{13x}
        D_{m+1}^{-1}\circ\dots\circ D_n^{-1}(s;\mu)=s^{\Lambda_{m,n}^{-1}}\bigl(A_{m+1,n}^*+B_{m+1,n}^*s+\mathcal{F}^\infty_{\eta_n}(\mu_0)\bigr),
    \end{equation}
    where
    \begin{equation}\label{15}
    \begin{array}{l}
        \displaystyle A_{j,k}^*=\prod_{i=0}^{k-j}(\Omega_{00}^{k-i})^{\Lambda_{j-1,k-1-i}^{-1}} = \prod_{i=0}^{k-j}(\Delta_{00}^{k-i})^{-\Lambda_{j-1,k-i}^{-1}}, \vspace{0.2cm} \\

        \displaystyle B_{j,k}^*=\Lambda_{j-1,k-1}^{-1}\frac{\Omega_{10}^k}{\Omega_{00}^k}A_{j,k}^*=-\Lambda_{j-1,k}^{-1}\frac{\Delta_{01}^k}{(\Delta_{00}^k)^2}A_{j,k}^*,
    \end{array}
    \end{equation}
    with the last equality on both lines following from Lemma~\ref{Lemma7}. Observe that $r(\mu)\neq1$ if, and only if, $\Lambda_{m,n}^{-1}\neq\Lambda_{0,m}$. Therefore it follows from \eqref{13} and \eqref{13x} that if $r(\mu_0)\neq1$ then
        \[\begin{array}{ll}
            \mathscr{D}(s;\mu) &= D_m\circ\dots\circ D_1(s;\mu)-D_{m+1}^{-1}\circ\dots\circ D_n^{-1}(s;\mu) \vspace{0.2cm} \\
            &= A_{1,m}s^{\Lambda_{0,m}}-A_{m+1,n}^*s^{\Lambda_{m,n}^{-1}}+\mathcal{F}^\infty_\ell(\mu_0),
        \end{array}\]
    for any given
        \[\ell\in(\max\{\Lambda_{0,m}^0,(\Lambda_{m,n}^0)^{-1}\},\min\{\Lambda_{0,m}^0+1,(\Lambda_{m,n}^0)^{-1}+1\}).\]   
    Suppose now that $r(\mu_0)=1$ and let $\alpha=\Lambda_{m,n}^{-1}-\Lambda_{0,m}$. In this case we have,
    \begin{equation}\label{14}
    \begin{array}{ll}
        \mathscr{D}(s;\mu) &=D_m\circ\dots\circ D_1(s;\mu)-D_{m+1}^{-1}\circ\dots\circ D_n^{-1}(s;\mu) \vspace{0.2cm} \\
        
        &=s^{\Lambda_{0,m}}\bigl(A_{1,m}+B_{1,m}s+\mathcal{F}^\infty_{\ell_1}(\mu_0)\bigr)-s^{\Lambda_{m,n}^{-1}}\bigl(A_{m+1,n}^*+B_{m+1,n}^*s+\mathcal{F}^\infty_{\eta_n}(\mu_0)\bigr) \vspace{0.2cm} \\

        &=s^{\Lambda_{m,n}^{-1}}\bigl[s^{-\alpha}\bigl(A_{1,m}+B_{1,m}s+\mathcal{F}^\infty_{\ell_1}(\mu_0)\bigr)-\bigr(A_{m+1,n}^*+B_{m+1,n}^*s+\mathcal{F}^\infty_{\eta_n}(\mu_0)\bigr)\bigr].
    \end{array}
    \end{equation}
    Consider 
        \[U(s;\mu)=1+\frac{B_{1,m}}{A_{1,m}}s+\mathcal{F}^\infty_{\ell_1}(\mu_0),\]
    and observe that from the Generalized Binomial Theorem~\ref{GBT} we have
        \[U(s;\mu)^{-1}=1-\frac{B_{1,m}}{A_{1,m}}s+\mathcal{F}^\infty_{\ell_1}(\mu_0).\]
    Hence it follows from \eqref{14} that 
    \[\begin{array}{ll}
        \mathscr{D}(s;\mu) &= s^{\Lambda_{m,n}^{-1}}\bigl[s^{-\alpha}A_{1,m}U(s;\mu)-\bigl(A_{m+1,n}^*+B_{m+1,n}^*s+\mathcal{F}^\infty_{\eta_n}(\mu_0)\bigr)\bigr] \vspace{0.2cm} \\

        &= s^{\Lambda_{m,n}^{-1}}U(s;\mu)\bigl[s^{-\alpha}A_{1,m}-U(s;\mu)^{-1}\bigl(A_{m+1,n}^*+B_{m+1,n}^*s+\mathcal{F}^\infty_{\eta_n}(\mu_0)\bigr)\bigr] \vspace{0.2cm} \\

        &\displaystyle= s^{\Lambda_{m,n}^{-1}}U(s;\mu)\biggl[\bigl(1+\alpha\omega(s;\alpha)\bigr)A_{1,m} \bigr.\biggr. \vspace{0.2cm} \\
        
        &\displaystyle \biggl.\bigl. \qquad\qquad\qquad\qquad\qquad-\biggl(1-\frac{B_{1,m}}{A_{1,m}}s+\mathcal{F}^\infty_{\ell_1}\biggr)\bigl(A_{m+1,n}^*+B_{m+1,n}^*s+\mathcal{F}^\infty_{\eta_n}(\mu_0)\bigr)\biggr] \vspace{0.2cm} \\

        &\displaystyle= s^{\Lambda_{m,n}^{-1}}U(s;\mu)\bigl[\Psi_1\omega(s;\alpha)+\Psi_2+\Psi_3s+\mathcal{F}^\infty_{\ell'}(\mu_0)\bigr],
    \end{array}\]
    where $\Psi_1=\alpha A_{1,m}$, $\Psi_2=A_{1,m}-A_{m+1,n}^*$,
        \[\begin{array}{ll}
            \Psi_3 &\displaystyle= A_{m+1,n}^*\frac{B_{1,m}}{A_{1,m}}-B_{m+1,n}^* \vspace{0.2cm} \\
            
            &\displaystyle= A_{m+1,n}^*\left(\Lambda_{1,m}\frac{\Delta_{10}^1}{\Delta_{00}^1}+\Lambda_{m,n}^{-1}\frac{\Delta_{01}^n}{(\Delta_{00}^n)^2}\right) \vspace{0.2cm} \\
            
            &\displaystyle= A_{m+1,n}^*\bigl(\Lambda_{0,m}S_1^1-\Lambda_{m,n}^{-1}S_2^n),
        \end{array}\]
    with the second equality following from \eqref{15} and 
        \[\ell'\in(1,\min\{\ell_1,\ell_n,2\})=(1,\min\{\lambda_1^0,(\lambda_n^0)^{-1},2\}).\]
    Finally, we now observe that $U(s;\mu)=1+\Lambda_{0,m}S_1^1s+\mathcal{F}^\infty_{\ell'}(\mu_0)$.
\end{proof}

\begin{obs}
    Similarly to Remark~\ref{R2}, we observe that except by a change of indexation, Proposition~\ref{Propo:displacement+-} is also equivalent to Propositions~\ref{Propo:Return1-+} and~\ref{Propo:Return1+-}.
\end{obs}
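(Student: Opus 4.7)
The proof is essentially bookkeeping: a cyclic relabeling of the corners puts the hypothesis of Proposition~\ref{Propo:displacement+-} into the form of Proposition~\ref{Propo:Return1+-} (and, after a further shift, into the form of Proposition~\ref{Propo:Return1-+} by Remark~\ref{R2}), and the displacement map encodes the same zeros as the return map once we restrict to a neighborhood of~$\Gamma$. So my plan is to establish the equivalence on two levels: at the level of detected limit cycles, and at the level of the defining coefficients appearing in the asymptotic expansions.

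First, I would note that the hypothesis $\lambda_i(\mu_0)>1$ for $i\in\{1,\dots,m\}$ and $\lambda_i(\mu_0)<1$ for $i\in\{m+1,\dots,n\}$ coincides with the hypothesis of Proposition~\ref{Propo:Return1+-}, while a cyclic shift of the index set $\{1,\dots,n\}$, exactly as described in Remark~\ref{R2} and illustrated in Figure~\ref{Fig2}, converts it into the hypothesis of Proposition~\ref{Propo:Return1-+}. The return map based at the new starting transversal $\Sigma_a$ (rather than $\Sigma_b$) is smoothly conjugate to the original one, so the cyclicity statements~(a)--(d) of Theorem~\ref{Teo:Return0Cycl012} and (a)--(b) of Theorem~\ref{Teo:Return1-+Cycl23} are unchanged.

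Second, I would invoke the chain of equivalences displayed just before Proposition~\ref{Propo:displacement+-}, which shows that $\mathscr{R}(s;\mu)=s$ if and only if $\mathscr{D}(s;\mu)=0$. Since the maps $D_n\circ\dots\circ D_{m+1}$ and its inverse are smooth diffeomorphisms near $s=0$, this equivalence is a bijection between the isolated fixed points of $\mathscr{R}(\cdot;\mu)$ and the isolated zeros of $\mathscr{D}(\cdot;\mu)$ in a neighborhood of $s=0$, uniformly for $\mu$ close to $\mu_0$. Consequently the counting functions giving $\mathrm{Cycl}(\Gamma,\mu_0)$ are the same whether we work with $\mathscr{R}$ or with $\mathscr{D}$.

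Third, I would match the coefficients coming from Proposition~\ref{Propo:displacement+-} with those of Proposition~\ref{Propo:Return1+-}. The key identities, all of which can be read off directly from~\eqref{15} and the definitions~\eqref{eq:formulasAlambda}, are
$A_{m+1,n}^*(\mu_0)=A_{m+1,n}(\mu_0)^{-1/\Lambda_{m,n}^0}$ and
$A_{1,m}^{\Lambda_{m,n}}A_{m+1,n}=A_{1,n}$,
so that the condition $\Psi_2(\mu_0)=A_{1,m}(\mu_0)-A_{m+1,n}^*(\mu_0)=0$ (in the regime $r(\mu_0)=1$, where $\Lambda_{0,m}^0=(\Lambda_{m,n}^0)^{-1}$) is algebraically equivalent to $A_{1,n}(\mu_0)=1$. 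Similarly, $\Psi_1=\alpha A_{1,m}$ vanishes at $\mu_0$ precisely when $\alpha(\mu_0)=\Lambda_{m,n}^{-1}(\mu_0)-\Lambda_{0,m}(\mu_0)=0$, which is the same as $r(\mu_0)=1$. Finally $\Psi_3$ is proportional to $S_1^1-\kappa S_2^n$ with $\kappa=\Lambda_{m,n}^{-1}/\Lambda_{0,m}$ equal to $1$ at $\mu_0$, and after the cyclic relabeling of Remark~\ref{R2} this is exactly the quantity $\mathcal{A}$ appearing in Theorem~\ref{Teo:Return1-+Cycl23} and Proposition~\ref{Propo:Return1-+}, up to a nonvanishing factor. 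Hence $(r-1,A_{1,n}-1,\mathcal{A})$ being independent at $\mu_0$ is equivalent to $(\Psi_1,\Psi_2,\Psi_3)$ being independent there, and the proof concludes.

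The only delicate point, and the place where I would spend the most care, is the last algebraic matching: one must track the compensator $\omega(s;\alpha)$ and the factor $U(s;\mu)$ through the equivalence, and verify that $U(0;\mu_0)=1\neq 0$ so that division by $U$ does not alter the zero set or the independence of coefficients. With that check in place, the equivalence of the three propositions up to a change of indexation is immediate.
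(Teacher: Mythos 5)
Your proposal is correct and takes essentially the same route the paper uses: the fixed-point set of $\mathscr{R}$ and the zero set of $\mathscr{D}$ coincide (the chain of equivalences displayed just before Proposition~\ref{Propo:displacement+-}), the cyclic relabeling of Remark~\ref{R2} converts the hypothesis of Proposition~\ref{Propo:displacement+-} into that of Proposition~\ref{Propo:Return1-+}, and the algebraic identities you record (in particular $A_{m+1,n}^{*}=A_{m+1,n}^{-1/\Lambda_{m,n}}$ and $A_{1,m}^{\Lambda_{m,n}}A_{m+1,n}=A_{1,n}$) are exactly what the paper later uses to establish $V(\Phi_k)=V(\Psi_k)$ at the end of Section~\ref{Sec:displacement2}. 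A small side note: $D_n\circ\dots\circ D_{m+1}$ is only a homeomorphism of intervals $(0,\varepsilon)$ onto their images (smooth for $s>0$, not at $s=0$), and $U$ is not defined at $s=0$ so one should speak of $\lim_{s\to 0^+}U(s;\mu_0)=1$, but neither affects your argument.
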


\section{Proof of the main theorems}\label{Sec:Proofs}
\begin{proof}[Proof of Theorem~\ref{Teo:Return0Cycl012}]
The expression of the return map \eqref{eq:ReturnmapL0} follows directly from Corollary~\ref{Coro0}. As for the assertions concerning the cyclicity of $\Gamma$, we need to consider the displacement map given by:
\begin{equation*}
\mathscr{D}(s;\mu)=\mathscr{R}(s;\mu)-s=s^r\left(A_{1,n}-s^{1-r}+\mathcal{F}_\ell^\infty(\mu_0)\right),
\end{equation*}
It is known that no bifurcation of limit cycles occur at $\mu_0$ near a persistent polycycle $\Gamma$ with graphic number $r(\mu_0)\neq 1$ (See \cite[Remark 2.12]{QMV}). This is precisely statement (a). To prove (b), we write (recall Definition~\ref{Def8}) the displacemet map as
\begin{equation}\label{eq:proof1.1}
\mathscr{D}(s;\mu)=s^{r}\left(A_{1,n}-1+(1-r)\omega(s;r-1)+\mathcal{F}_\ell^\infty(\mu_0)\right).
\end{equation}
Now, since $\mathscr{R}(\cdot,\mu_0)\not\equiv Id$, given $\varepsilon>0$ there exists $s_1\in(0,\varepsilon)$ such that $\mathscr{D}(s_1,\mu_0)\neq 0$. Without loss of generality, assume that $\mathscr{D}(s_1,\mu_0)>0$, since the other case is analogous. Thus, there exists a neighborhood $U$ of $\mu_0$ such that $\mathscr{D}(s_1,\mu)>0$ for $\mu\in U$. Since $r-1$ changes signs at $\mu_0$ we can take $\mu_1\in U$ such that $r(\mu_1)>1$. Now, using Lemmas \ref{LemaPropFlat} and \ref{Lema:omega} we have that
\begin{equation}\label{eq:proof:1.2}
Z_1(s;\mu):=\frac{s^{-r}\mathscr{D}(s;\mu)}{\omega(s;r-1)}=\frac{A_{1,n}-1}{\omega(s;r-1)}+(1-r)+\mathcal{F}_{\ell-\delta}^\infty(\mu_0),
\end{equation}
and
\begin{equation*}
\lim\limits_{s\to 0^+}Z_1(s;\mu)=(A_{1,n}-1)\max\{1-r,0\}+(1-r)=(1-r)\beta,
\end{equation*}
where 
    \[\beta=\left\{\begin{array}{ll}
                    1, & \text{if } 1-r\leqslant0, \vspace{0.2cm} \\
                    A_{1,n}, & \text{if } 1-r>0.
    \end{array}\right.\]
In either case, we have $\beta>0$. Since $r(\mu_1)<1$, we have that $\lim\limits_{s\to 0^+}Z_1(s;\mu_1)<0$ and thus, there exists $s_2\in (0,s_1)$ such that $Z_1(s_2,\mu_1)<0$. Therefore $\mathscr{D}(s_2,\mu_1)\mathscr{D}(s_1,\mu_1)<0$ and by continuity, there is at least one $s^\ast\in (s_2,s_1)\subset(0,\varepsilon)$ such that $\mathscr{D}(s^\ast,\mu_1)=0$. Since $X_{\mu_1}$ is analytic, we have that $s^*$ is an isolated solution and thus ${\rm Cycl}(\Gamma,\mu_0)\geqslant1$.

Now, we turn to prove statement (c). For $r(\mu_0)\neq 1$ this statement follows from (a). For $r(\mu_0)=1$, the upper bound on the cyclicity is obtained by applying the derivation-division algorithm. Using Lemma~\ref{Lema:omega} we have that
\begin{equation}\label{eq:proof1.4}
\partial_sZ_1(s;\mu)=\frac{(A_{1,n}-1)s^{-r}}{\omega(s;r-1)^2}+\mathcal{F}_{\ell-\delta-1}^\infty (\mu_0).
\end{equation}
Since $s^{-\alpha}\in\mathcal{F}_{-\delta}^{\infty}(\{\alpha=0\})$, for any given $\delta\in(0,\ell/4)$ we have that
\[\lim\limits_{(s,\mu)\to (0^+,\mu_0)}s^r\omega(s;r-1)^2\partial_sZ_1(s;\mu)=\lim\limits_{(s,\mu)\to (0^+,\mu_0)}(A_{1,n}-1)+\mathcal{F}_{\ell-4\delta}^\infty (\mu_0)=A_{1,n}(\mu_0)-1.\]
Under the hypothesis of (c), the above limit is not zero, which implies by Rolle's Theorem that there is a small neighborhood of $\mu_0$ and $\varepsilon>0$ such that $Z_1(\cdot;\mu)$ and thus $\mathscr{D}(\cdot;\mu)$ has at most one zero $s^\ast\in(0,\varepsilon)$. Hence, ${\rm Cycl}(\Gamma,\mu_0)\leqslant 1$.

Finally, we assume the hypothesis of (d). Since $\mathscr{R}(\cdot;\mu_0)\not\equiv Id$, given $\varepsilon>0$, there exists $s_1\in (0,\varepsilon)$ such that $\mathscr{D}(s_1;\mu_0)\neq 0$. Again, we assume without loss of generality that that $\mathscr{D}(s_1,\mu_0)<0$ which implies that there exists  a neighborhood $U$ of $\mu_0$ such that $\mathscr{D}(s_1,\mu)<0$ for $\mu\in U$. Since $r-1$ and $A_{1,n}-1$ are independent at $\mu_0$, we can take $\mu_1\in U$ such that $r(\mu_1)=1$ and $A_{1,n}(\mu_1)>1$. Then, by \eqref{eq:proof1.1}
\[\lim_{s\to 0^+}s^{-1}\mathscr{D}(s;\mu_1)=A_{1,n}(\mu_1)-1>0.\]
Therefore, there exists $s_2\in(0,s_1)$ such that $\mathscr{D}(s_2,\mu_1)>0$, which implies that there is a neighborhood $U_1\subset U$ of $\mu_1$ such that $\mathscr{D}(s_2,\mu)>0$ for $\mu\in U_1$. Now, using the independence of $r-1$ and $A_{1,n}-1$ at $\mu_0$, we take $\mu_2\in U_1$ such that $r(\mu_2)>1$. From \eqref{eq:proof1.4}, we have that $\lim\limits_{s\to 0^+}Z_1(s;\mu_2)<0$ which implies the existence of $s_3\in(0,s_2)$ such that $\mathscr{D}(s_3;\mu_2)<0$. Now, since 
    \[\mathscr{D}(s_3;\mu_2)<0,\;\mathscr{D}(s_2;\mu_2)>0,\;\mathscr{D}(s_1;\mu_2)<0,\]
we have by continuity that ${\rm Cycl}(\Gamma,\mu_0)\geqslant 2$.
\end{proof}

\medskip

\begin{proof}[Proof of Theorem~\ref{Teo:Return1-+Cycl23}]
It follows from Proposition \ref{Propo:Return1-+} that the return map is given by equation~\eqref{eq:Return1-+}. Thus, we turn to the proof of the statements concerning the cyclicity of $\Gamma$. For this purpose, we consider the displacement map $\mathscr{D}(s;\mu)=\mathscr{R}(s;\mu)-s$, which under the current hypothesis is written as
\begin{equation}\label{eq:proof:2.0}
\mathscr{D}(s;\mu)=s^{r}\big(A_{1,n}-s^{1-r}+\mathcal{A}s^{\Lambda_{0,m}}+\mathcal{F}^\infty_{\ell}(\mu_0)\bigr).
\end{equation}

To prove (a) we apply the derivation-division algorithm to the function $Z_1(s;\mu)$ defined in \eqref{eq:proof:1.2}, which under the current hypothesis writes as follows.
\begin{equation*}
Z_1(s;\mu)=\frac{A_{1,n}-1}{\omega(s;r-1)}+(1-r)+\frac{\mathcal{A}s^{\Lambda_{0,m}}}{\omega(s;r-1)}+\mathcal{F}_{\ell}^{\infty}(\mu_0).
\end{equation*}
We assume that the hypothesis of items (a) and (c) in Theorem \ref{Teo:Return0Cycl012} do not hold, i.e. $r(\mu_0)=1$ and $A_{1,n}(\mu_0)=1$, otherwise we would have ${\rm Cycl}(\Gamma,\mu_0)<2$ immediately. We have that
\begin{equation*}
\partial_sZ_1(s;\mu)=\frac{(A_{1,n}-1)s^{-r}}{\omega(s;r-1)^2}+\frac{\Lambda_{0,m}\mathcal{A}s^{\Lambda_{0,m}-1}}{\omega(s;r-1)}+\frac{\mathcal{A}s^{\Lambda_{0,m}-r}}{\omega^2(s;r-1)}+\mathcal{F}_{\ell-\delta-1}^{\infty}(\mu_0),
\end{equation*}
which yield
\begin{eqnarray*}
\Theta_1(s;\mu)&:=&s^r\omega^2(s;r-1)\partial_sZ_1(s;\mu)\nonumber\\
&=&(A_{1,n}-1)+\Lambda_{0,m}\mathcal{A}s^{\Lambda_{0,m}+r-1}\omega(s;r-1)+\mathcal{A}s^{\Lambda_{0,m}}+\mathcal{F}_{\ell-4\delta}^{\infty}(\mu_0),
\end{eqnarray*}
for any $\delta\in (0,\ell/4)$. The derivative with respect to $s$ is given by
\begin{equation*}
\partial_s\Theta_1(s;\mu)=(r-1+\Lambda_{0,m})\Lambda_{0,m}\mathcal{A}s^{r+\Lambda_{0,m}-2}\omega(s;r-1)+\mathcal{F}_{\ell-4\delta-1}^{\infty}(\mu_0).
\end{equation*}
and finally,
\begin{equation*}
Z_2(s;\mu):=\frac{s^{2-\Lambda_{0,m}-r}\partial_s\Theta_1(s;\mu)}{\omega(s;r-1)}=(r-1+\Lambda_{0,m})\Lambda_{0,m}\mathcal{A}+\mathcal{F}_{\ell+1-6\delta}^{\infty}(\mu_0).
\end{equation*}
Taking $\delta\in (0,\min\{\ell/4,(\ell+1)/6\})$, we have that
\[\lim\limits_{(s,\mu)\to(0^+,\mu_0)}Z_2(s;\mu)=(\Lambda_{0,m}^0)^2\mathcal{A}(\mu_0)\neq 0,\]
and by Rolle's theorem, there is a small neighborhood $U$ of $\mu_0$ and $\varepsilon>0$ such that $\Theta_1(\cdot;\mu)$ has at most one zero in $(0,\varepsilon)$ which implies that $Z_1(s;\mu)$ and thus $\mathscr{D}(s;\mu)$ have at most two zeros in the interval $(0,\varepsilon)$. Thus, ${\rm Cycl}(\Gamma,\mu_0)\leqslant 2$.

To prove the assertion in item (b), we follow steps analogous to those in the proofs of items (b) and (d) in Theorem \ref{Teo:Return0Cycl012}: Since $\mathscr{R}(\cdot;\mu_0)\not\equiv Id$, given $\varepsilon>0$, there exists a $s_1\in (0,\varepsilon)$ such that $\mathscr{D}(s_1,\mu_0)\neq 0$, without loss of generality, we assume that $\mathscr{D}(s_1,\mu_0)>0$. By continuity, there exists $U\ni \mu_0$ such that $\mathscr{D}(s_1;\mu)>0$ for $\mu\in U$. Then, by the independence of $r-1$, $A_{1,n}-1$ and $\mathcal{A}$ at $\mu_0$, we take $\mu_1\in U$ for which $r(\mu_1)=1$, $A_{1,n}(\mu_1)=1$ and $\mathcal{A}(\mu_1)<0$. Then from~\eqref{eq:proof:2.0} we have
\[\lim\limits_{s\to 0^+}s^{-1-\Lambda_{0,m}}\mathscr{D}(s;\mu_1)=\mathcal{A}(\mu_1)<0,\]
which implies that there exists $s_2\in (0,s_1)$ such that $\mathscr{D}(s_2;\mu_1)<0$. Hence, by continuity, there exists a neighborhood $U_1\subset U$ of $\mu_1$ such that $\mathscr{D}(s_2;\cdot)\vert_{U_1}<0$. Again, we take $\mu_2\in U_1$ with $r(\mu_2)=1$ and $A_{1,n}(\mu_2)>1$. By \eqref{eq:proof:2.0} we have
\[\lim\limits_{s\to 0^+}s^{-1}\mathscr{D}(s;\mu_2)=A_{1,n}(\mu_2)-1>0.\]
Thus, there exists $s_3\in (0,s_2)$ such that $\mathscr{D}(s_3;\mu_2)>0$ which in turn implies that there exists a neighborhood $U_2$, with $\mu_2\in U_2\subset U_1$ such that $\mathscr{D}(s_3;\cdot)\vert_{U_2}>0$. Finally, taking $\mu_3\in U_2$ for which $r(\mu_3)>1$, we obtain that
\[\lim\limits_{s\to 0^+}Z_1(s;\mu_3)=1-r(\mu_3)<0.\]
Hence, we obtain $s_4\in (0,s_3)$ such that $\mathscr{D}(s_4;\mu_3)<0$, $\mathscr{D}(s_3;\mu_3)>0$, $\mathscr{D}(s_2;\mu_3)<0$ and $\mathscr{D}(s_1;\mu_3)>0$ and therefore we conclude that ${\rm Cycl}(\Gamma,\mu_0)\geqslant 3$.
\end{proof}

\section{The equivalence of the displacement map}\label{Sec:displacement2}

As anticipated in Section~\ref{Sec:displacement1}, in this paper we focused on the first return map. However, sometimes it may be convenient to work with the displacement map~\eqref{23} instead. Therefore in this section we observe that there is also a similar version of Theorems~\ref{Teo:Return0Cycl012} and~\ref{Teo:Return1-+Cycl23} for the displacement map.

\begin{main}\label{Teo:displacement}
 Let $\{X_\mu\}_{\mu\in\Lambda}$ be a smooth family of planar analytic vector fields having a persistent polycycle $\Gamma$ with hyperbolic saddles $p_1,\dots,p_m,p_{m+1},\dots,p_n$. Let $\mu_0\in\Lambda$ be such that $\lambda_i(\mu_0)<1$ for $i\in\{1,\dots,m\}$ and $\lambda_i(\mu_0)>1$ for $i\in\{m+1,\dots,n\}$. Then the first displacement map of $\Gamma$ is given by
\begin{equation*}
        \mathscr{D}(s;\mu)=\left\{\begin{array}{ll}
            A_{1,m}s^{\Lambda_{0,m}}-A_{m+1,n}^*s^{\Lambda_{m,n}^{-1}}+\mathcal{F}^\infty_{\ell}(\mu_0), & \text{if } r(\mu_0)\neq1, \vspace{0.2cm} \\
            
            s^{\Lambda_{m,n}^{-1}}U(s;\mu)                      \bigl(\Psi_1\omega(s;\alpha)+\Psi_2+\Psi_3s+\mathcal{F}^\infty_{\ell'}(\mu_0)\bigr), & \text{if } r(\mu_0)=1,
        \end{array}\right.
\end{equation*}
for any given 
   \[\ell\in(\max\{\Lambda_{0,m}^0,(\Lambda_{m,n}^0)^{-1}\},\min\{\Lambda_{0,m}^0+1,(\Lambda_{m,n}^0)^{-1}+1\}), \quad \ell'\in(1,\min\{\lambda_1^0,(\lambda_n^0)^{-1},2\}),\]
where 
\begin{equation*}
    \Psi_1=\alpha A_{1,m}, \quad \Psi_2=A_{1,m}-A_{m+1,n}^*, \quad \Psi_3=A_{m+1,n}^*\bigl(\Lambda_{0,m}S_1^1-\Lambda_{m,n}^{-1}S_2^n),
\end{equation*}
$\alpha=\Lambda_{m,n}^{-1}-\Lambda_{0,m}$ and $U(s;\mu)=1+\Lambda_{0,m}S_1^1s+\mathcal{F}^\infty_{\ell'}(\mu_0)$.
Moreover, the following holds:
\begin{itemize}
    \item [(a)] ${\rm Cycl}(\Gamma,\mu_0)=0$, if $\Psi_1\neq0$;
    \item [(b)] ${\rm Cycl}(\Gamma,\mu_0)\geqslant 1$, if $\Psi_1=0$, $\Psi_1$ changes signs at $\mu_0$ and $\mathscr{R}(\cdot;\mu_0)\not\equiv Id$;
    \item [(c)] ${\rm Cycl}(\Gamma,\mu_0)\leqslant 1$, if $\Psi_2\neq0$;
    \item [(d)] ${\rm Cycl}(\Gamma,\mu_0)\geqslant 2$, if  $\Psi_1=\Psi_2=0$, $\Psi_1,\;\Psi_2$ are independent at $\mu_0$ and $\mathscr{R}(\cdot;\mu_0)\not\equiv Id$;  
    \item [(e)] ${\rm Cycl}(\Gamma,\mu_0)\leqslant 2$, if $\Psi_3\neq0$;
    \item [(f)] ${\rm Cycl}(\Gamma,\mu_0)\geqslant 3$ if $\Psi_1=\Psi_2=\Psi_3=0$, $\Psi_1,\;\Psi_2,\;\Psi_3$ are independent at $\mu_0$ and $\mathscr{R}(\cdot;\mu_0)\not\equiv Id$.
\end{itemize}
\end{main}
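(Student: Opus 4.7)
The first displayed formula in Theorem~\ref{Teo:displacement} is a direct restatement of Proposition~\ref{Propo:displacement+-}; after the relabeling of corners described in Remark~\ref{R2}, the hypothesis of the theorem matches that of the proposition exactly. The novelty therefore lies entirely in the six cyclicity assertions. Since $\mathscr{R}(s;\mu)=s$ iff $\mathscr{D}(s;\mu)=0$, as observed in Section~\ref{Sec:displacement1}, the cyclicity of $\Gamma$ equals the maximal number of isolated zeros of $\mathscr{D}(\cdot;\mu)$ in a one-sided neighborhood of $0$ as $\mu$ varies near $\mu_0$. My plan is to adapt the arguments from the proofs of Theorems~\ref{Teo:Return0Cycl012} and~\ref{Teo:Return1-+Cycl23}, noting the correspondence $(r-1,A_{1,n}-1,\mathcal{A})\leftrightarrow(\Psi_1,\Psi_2,\Psi_3)$ that follows from $\Psi_1=\alpha A_{1,m}$ with $A_{1,m}>0$ (so $\Psi_1=0\iff r=1$) and, when $r(\mu_0)=1$, the identity $A_{1,n}=(A_{1,m}/A_{m+1,n}^\ast)^{\Lambda_{m,n}}$ (so $\Psi_2=0\iff A_{1,n}=1$).

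For item (a), when $\Psi_1(\mu_0)\neq 0$ we have $r(\mu_0)\neq 1$, the first line of the formula for $\mathscr{D}$ applies, and the two leading monomials $A_{1,m}s^{\Lambda_{0,m}}$ and $A_{m+1,n}^\ast s^{\Lambda_{m,n}^{-1}}$ have distinct exponents and positive coefficients. The smaller exponent dominates for $s\to 0^+$, giving $\mathscr{D}$ a definite sign and hence ${\rm Cycl}(\Gamma,\mu_0)=0$ (equivalently, by Čerkas).

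For the upper bounds (c) and (e) I assume $r(\mu_0)=1$, since otherwise (a) already yields cyclicity~$0$. Factor $\mathscr{D}=s^{\Lambda_{m,n}^{-1}}U(s;\mu)\Phi(s;\mu)$ with $s^{\Lambda_{m,n}^{-1}}U>0$ for $s$ small, where $\Phi=\Psi_1\omega(s;\alpha)+\Psi_2+\Psi_3 s+\mathcal{F}^\infty_{\ell'}(\mu_0)$, and observe that $\omega(s;\alpha)>0$ for $s\in(0,1)$ by Definition~\ref{Def8}. Divide by $\omega$:
\[Z_1(s;\mu):=\frac{\Phi(s;\mu)}{\omega(s;\alpha)}=\Psi_1+\frac{\Psi_2+\Psi_3 s}{\omega(s;\alpha)}+\mathcal{F}^\infty_{\ell'-\delta}(\mu_0).\]
Differentiating and multiplying by the positive factor $s^{\alpha+1}\omega(s;\alpha)^2$, using $\partial_s\omega=-s^{-\alpha-1}$, yields
\[\Theta_1(s;\mu):=s^{\alpha+1}\omega(s;\alpha)^2\,\partial_s Z_1=\Psi_2+\Psi_3\bigl(s^{\alpha+1}\omega(s;\alpha)+s\bigr)+\mathcal{F}^\infty_{\ell'-2\delta}(\mu_0),\]
whose limit as $(s,\mu)\to(0^+,\mu_0)$ is $\Psi_2(\mu_0)$. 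Under the hypothesis of (c) this is nonzero, so Rolle's theorem forces ${\rm Cycl}(\Gamma,\mu_0)\leqslant 1$. For (e), since $\partial_s\bigl(s^{\alpha+1}\omega+s\bigr)=(\alpha+1)s^\alpha\omega$, dividing $\partial_s\Theta_1$ by the positive factor $s^\alpha\omega$ produces a function with limit $\Psi_3(\mu_0)\neq 0$ at $(0^+,\mu_0)$, and two applications of Rolle give ${\rm Cycl}(\Gamma,\mu_0)\leqslant 2$.

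The lower-bound items (b), (d), (f) follow the bootstrap sign-change argument used throughout Section~\ref{Sec:Proofs}: since $\mathscr{R}(\cdot;\mu_0)\not\equiv Id$ there exists $s_1$ with $\mathscr{D}(s_1,\mu_0)\neq 0$, which persists on a neighborhood of $\mu_0$; using the independence of the relevant $\Psi_j$ one then selects parameters $\mu_1,\mu_2,\mu_3$ on successively smaller neighborhoods, each tuned so that the asymptotic formula forces $\mathscr{D}(\cdot;\mu_k)$ to take the opposite sign at a new smaller scale $s_{k+1}\in(0,s_k)$; analyticity of $X_\mu$ makes the resulting zeros isolated, producing $1$, $2$ and $3$ of them respectively. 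The main obstacle I anticipate is the flatness bookkeeping in (e), where the composition of multiplications by $s^{\alpha+1}\omega^2$ and $s^{-\alpha}\omega^{-1}$ interleaved with $\partial_s$ must preserve strictly positive $s$-order of the remainders uniformly in $\mu$ near $\mu_0$. This is secured by the hypothesis $\ell'>1$ together with Lemmas~\ref{LemaPropFlat} and~\ref{Lema:omega}, provided $\delta>0$ is taken sufficiently small.
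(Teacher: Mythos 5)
Your proposal is correct and follows essentially the same route as the paper, whose own proof simply defers to Proposition~\ref{Propo:displacement+-} for the expression of $\mathscr{D}$ and to the derivation--division and bootstrap sign-change arguments of Theorems~\ref{Teo:Return0Cycl012} and~\ref{Teo:Return1-+Cycl23} for the cyclicity statements. In fact you supply more detail than the paper does (the explicit factorization through $Z_1$ and $\Theta_1$ using $\partial_s\omega=-s^{-\alpha-1}$, and the identification $V(\Psi_1)=V(r-1)$, $V(\Psi_1,\Psi_2)=V(r-1,A_{1,n}-1)$), and your flagged concern about the flatness bookkeeping is resolved exactly as you indicate, by taking $\delta$ small relative to $\ell'-1$.
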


\begin{proof}
    The expression of $\mathscr{D}$ follows from Proposition~\ref{Propo:displacement+-}. The proof of the statements about the cyclicity follows similarly to the proof of Theorems~\ref{Teo:Return0Cycl012} and~\ref{Teo:Return1-+Cycl23} (also also similarly to the proof of~\cite[Theorem~$A$]{MarVilKolmogorov}).
\end{proof}

We finish this section by observing that the first return and the displacement maps also shares another type of similarity. More precisely, from Theorems~\ref{Teo:Return0Cycl012} and~\ref{Teo:Return1-+Cycl23} it follows that the cyclicity of $\Gamma$ is governed by the zeros of the functions
    \[\Phi_1:=r(\mu)-1, \quad \Phi_2:=A_{1,n}-1, \quad \Phi_3:=\mathcal{A},\]
where we recall that $\mathcal{A}=\Lambda_{m,n}A_{1,m}A_{1,n}(S_1^{m+1}-S_2^m)$. Therefore if we let $\Psi_1$, $\Psi_2$ and $\Psi_3$ be given as in Theorem~\ref{Teo:displacement}, then one can apply the formulas \eqref{eq:formulasAlambda} to verify that
    \[V(\Phi_1)=V(\Psi_1), \quad V(\Phi_1,\Phi_2)=V(\Psi_1,\Psi_2), \quad V(\Phi_1,\Phi_2,\Phi_3)=V(\Psi_1,\Psi_2,\Psi_3),\]
where we recall that $V(f_1,\dots,f_k)$ denotes the variety defined by $f_1,\dots,f_k$ (see Definition~\ref{Def1}).

\section{An application in Game Theory}\label{Sec:Application}

The notion of Evolutionary Stable States (ESS) was first introduced in the paper \cite{SmithPrice73} by Smith and Price, in which they applied concepts of Game Theory into Biology. Roughly speaking, given a game with two or more players (modeling a conflict between species, for instance), an ESS is an strategy such that if most of the players follow it, then no other strategy would provide the other players higher advantages, that is, the best course of action for the other players is to also follow the ESS.

In 1978, Taylor and Jonker \cite{TaylorJonker78} approached the study of ESS in the scope of Ordinary Differential Equations. One of their significant contributions was the modeling of a multiple-player game by a system of differential equations. In particular, a game with two players can be modeled by a planar polynomial vector field. One of such models is given by the following polynomial system. 

\begin{equation}\label{eq:gamemodel}
\begin{array}{l}
\dot x= x(x-1)f(x,y),\\
\dot y= y(y-1)g(x,y).
\end{array}
\end{equation}

In the context given by the model \eqref{eq:gamemodel}, the limit cycles have an important significance. Hofbauer et al. \cite{Hofbauer} proved that every ESS is an assymptotically stable singularity, while the converse does not hold. They also observed that there is no special distinction between ESS and assymptotically stability. Hence, one can study assymptotical stability rather than ESS. In this scenario, a stable limit cycle can be interpreted as an \emph{oscillating stable strategy}. In this scope, the model \eqref{eq:gamemodel} has been recently studied in several papers (for instance, \cite{BastosBuzziSantana24,BastosBuzziSantana24JDE,GasullFofoSantana25}).

In the present work, we consider system \eqref{eq:gamemodel} in the case which the boundary of the unit square is a hyperbolic polycycle. More precisely, we work with the family $X_{\mu}$ of vector fields associated to the following systems.

\begin{equation}\label{eq:gamepoly}
\begin{array}{l}
\dot x=x\, (x-1)\biggl(-1-\left(\lambda_{3}-1\right) x +y -\left(\lambda_{1}-\lambda_{3}\right) x y + \lambda_{1} y^{2}\biggr),\\
\dot y=y\,(y-1)\biggl(\lambda_{2}-\left(\lambda_{2}+\mu_{1}\right) x -\left(\lambda_{2}-1\right) y +\left(\mu_{1}-1\right) x^{2}+\left(\lambda_{2}-\lambda_{4}\right) x y\biggr).
\end{array}
\end{equation}

\begin{teo}
There exist parameter values $\mu_{0}\in\Lambda$, such that $X_{\mu}$ has two limit cycles bifurcating from the polycycle at the boundary of the unit square for $\mu\approx \mu_0$.
\end{teo}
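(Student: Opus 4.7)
The plan is to verify the hypotheses of Theorem~\ref{Teo:Return0Cycl012}(d), which yields ${\rm Cycl}(\Gamma,\mu_0)\geqslant 2$ and hence two limit cycles bifurcating from $\Gamma$ for $\mu$ near $\mu_0$. First I would establish the geometric setup: the lines $\{x=0\},\{x=1\},\{y=0\},\{y=1\}$ are invariant by inspection of~\eqref{eq:gamepoly}, so the boundary of the unit square is invariant for every $\mu\in\Lambda$ and the resulting polycycle is persistent. A direct Jacobian computation at the four corners $p_1=(0,0),\,p_2=(1,0),\,p_3=(1,1),\,p_4=(0,1)$ produces the diagonal matrices $\mathrm{diag}(1,-\lambda_2),\,\mathrm{diag}(-\lambda_3,1),\,\mathrm{diag}(1,-\lambda_4),\,\mathrm{diag}(-\lambda_1,1)$ respectively, the off-diagonal entries vanishing because $x(x-1)$ annihilates at $x\in\{0,1\}$ and similarly for $y$. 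Under the standing assumptions $\lambda_i>0$ and $\mu_1>0$, the four corners are hyperbolic saddles with hyperbolicity ratios $\lambda_2,\lambda_3,\lambda_4,\lambda_1$; the flow along $\partial[0,1]^2$ runs counterclockwise, so $\Gamma$ is a hyperbolic polycycle with graphic number $r(\mu)=\lambda_1\lambda_2\lambda_3\lambda_4$.

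Second, I would compute the leading Dulac coefficients $\Delta_{00}^i(\mu)$ at each corner by invoking Appendix~\ref{App:Dulac}, using transversal sections placed on the interior of each side. After translating each corner to the origin and, where necessary, swapping the roles of the stable and unstable separatrices, the system is already in the form $\dot X=XP(X,Y),\,\dot Y=YQ(X,Y)$ required by Theorem~\ref{Teo:Dulacmap}, so the formulas of Appendix~\ref{App:Dulac} apply verbatim. From the four $\Delta_{00}^i$ one assembles $A_{1,4}=\prod_{i=1}^{4}(\Delta_{00}^i)^{\Lambda_{i,4}}$ as an explicit smooth function of $(\lambda_1,\ldots,\lambda_4,\mu_1)$.

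Third, I would solve the simultaneous system $r(\mu_0)=A_{1,4}(\mu_0)=1$. Since $r$ depends only on $(\lambda_1,\ldots,\lambda_4)$ while $A_{1,4}$ depends nontrivially also on $\mu_1$ (which enters through the higher-order coefficients of $g$ and hence through $\Delta_{00}^2$), one can first fix $\lambda_i>0$ with $\lambda_1\lambda_2\lambda_3\lambda_4=1$ and then solve a single equation for $\mu_1=\mu_1^\ast>0$. At such a $\mu_0$ the $\mu_1$-component of $\nabla(r-1)$ vanishes while that of $\nabla(A_{1,4}-1)$ does not, so the gradients are linearly independent and, by the observation following Definition~\ref{Def1}, $r-1$ and $A_{1,4}-1$ are independent at $\mu_0$. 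The condition $\mathscr{R}(\cdot;\mu_0)\not\equiv Id$ then holds because the return map of an analytic vector field is analytic on one-sided intervals $(0,\varepsilon)$, so it is either identically $s$ or has isolated fixed points; the former is ruled out by selecting $\mu_0$ inside the three-dimensional surface $\{r=A_{1,4}=1\}\subset\Lambda$ at which the next coefficient $\mathcal{A}$ given by Proposition~\ref{Propo:Return1-+} does not vanish.

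The main obstacle is the explicit evaluation of $\Delta_{00}^i$ via Appendix~\ref{App:Dulac} and the subsequent algebraic verification that $\partial_{\mu_1}A_{1,4}$ does not vanish identically along $\{r=1\}$ and that $\mathcal{A}$ is not identically zero along $\{r=A_{1,4}=1\}$; although the presence of five scalar parameters controlling only two constraints leaves ample room, a careful sign analysis is needed to ensure that the constructed $\mu_0$ satisfies $\lambda_i>0$ and $\mu_1^\ast>0$, so that the four corners genuinely remain saddles with the claimed orientation.
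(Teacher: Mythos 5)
Your proposal follows essentially the same route as the paper: verify the invariance of the four sides and the hyperbolicity of the corner saddles, compute $r(\mu)=\lambda_1\lambda_2\lambda_3\lambda_4$ and $A_{1,4}(\mu)$, find $\mu_0$ with $r(\mu_0)=A_{1,4}(\mu_0)=1$ and $r-1,\,A_{1,4}-1$ independent, and rule out $\mathscr{R}(\cdot;\mu_0)\equiv Id$ by showing $\mathcal{A}(\mu_0)\neq0$, so that Theorem~\ref{Teo:Return0Cycl012}(d) and Theorem~\ref{Teo:Return1-+Cycl23}(a) give cyclicity exactly $2$. The computational verification you flag as the remaining obstacle is exactly what the paper supplies, by exhibiting the explicit point $\mu_0=(\tfrac{8}{27},\tfrac{3}{2},\tfrac{3}{2},\tfrac{3}{2},\tfrac{2}{5},\tfrac{1625}{162})$, checking that the Jacobian of $(r-1,A_{1,4}-1)$ has rank $2$ there, and evaluating the factor $B(\mu_0)=S_1^2-S_2^1\approx 6.2\neq0$ of $\mathcal{A}(\mu_0)$.
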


\begin{proof}
Consider the family $\{X_\mu\}_{\mu\in\Lambda}$ associated to \eqref{eq:gamepoly}, with $\mu=(\lambda_1,\lambda_2,\lambda_3,\lambda_4,\mu_1)$ and $\Lambda=\{\lambda_i>0: i\in\{1,\dots,4\}\}$. Denote by $\Gamma$ the boundary of $[1,0]^2$. We have that $\Gamma$ is a persistent polycycle for the family $\{X_\mu\}_{\mu\in\Lambda}$. Indeed, we have that the lines $x=0$, $x=1$, $y=0$ and $y=1$ are invariant through $X_{\mu}$. Moreover, the points $p_1=(0,1)$, $p_2=(0,0)$, $p_3=(1,0)$ and $p_4=(1,1)$ are hyperbolic saddles since the Jacobian matrix of $X_{\mu}$ evaluated at $p_i$ is given by
$$JX_{\mu}(p_1)=\left(\begin{array}{cc}
    -\lambda_1 & 0 \\
    0 & 1
\end{array}\right),\quad JX_{\mu}(p_2)=\left(\begin{array}{cc}
    1 & 0 \\
    0 & -\lambda_2
\end{array}\right),$$
$$ JX_{\mu}(p_3)=\left(\begin{array}{cc}
    -\lambda_3 & 0 \\
    0 & 1
\end{array}\right),\quad JX_{\mu}(p_4)=\left(\begin{array}{cc}
    1 & 0 \\
    0 & -\lambda_4
\end{array}\right).$$
Since the quantities 
$$\frac{\lambda_1}{\lambda_1-1},\;\frac{\lambda_2}{\lambda_2-1},\;\frac{1}{1-\lambda_3},\;\frac{1}{1-\lambda_4},$$
do not lie in the interval $(0,1)$ for $\mu\in\Lambda$, there are no singularities in $\Gamma$ besides $p_i$. Thus, $\Gamma$ is a persistent polycycle for $\mu\in\Lambda$. Notice that $\Gamma$ is oriented counterclockwise.

Since $\Gamma$ is a square, simple translations and rotations suffice to put system \eqref{eq:gamemodel} into the standard form \eqref{eq:X1}. Thus, one can readily apply the formulas given in \cite[Theorem A]{MarVilDulacCoef} to compute the coefficients $\Delta_{00}^i$, $\Delta_{10}^i$ and $\Delta_{01}^i$ of each Dulac map $D_i(s;\mu)$. We need to compute the functions $r(\mu)$, $A_{1,4}(\mu)$ of the return map, to study the cyclicity of $\Gamma$. We have that
\begin{equation*}
\begin{aligned}
r(\mu)&=\lambda_1\lambda_2\lambda_3\lambda_4,\\ 
A_{1,4}(\mu)&=\exp\biggl(\frac{1}{(\lambda_{1}-1) (\lambda_{2}-1) (\lambda_{3}-1) (\lambda_{4}-1) \lambda_{1}}\bigg(\ln (\lambda_{1}+1) (r(\mu) -1) (\lambda_{4}-1) (\lambda_{3}-1)\cdot\\
&\;(\lambda_{2}-1) (1-\lambda_{1}^{2} \lambda_{4}+(\mu_{1}+\lambda_{4}-2) \lambda_{1})+\lambda_{1} (\ln (2) (\lambda_{4}-1) (\lambda_{3}-1) (\lambda_{2}-1) (1-\mu_{1}) r(\mu)\\
&\;\;-\lambda_{2} \lambda_{3} (\lambda_{4}-1) (\lambda_{3}-1) \lambda_{4} (\lambda_{2}-1) (1-\lambda_{1}^{2} \lambda_{4}+(\mu_{1}+\lambda_{4}-2) \lambda_{1}) \ln (\lambda_{1})+(\lambda_{2} \lambda_{3}^{2}-1\\
&\;\;\;+(\mu_{1}-\lambda_{2}) \lambda_{3}) (\lambda_{4}-1) \lambda_{4} (\lambda_{1}-1) (\lambda_{2}-1) \ln (\lambda_{3})-(\lambda_{3}-1) ((\lambda_{1}-1) (\lambda_{2}-1) (\lambda_{1} \lambda_{4}\\
&\;\;\;\;-(\lambda_{4}-1) (\lambda_{4} \lambda_{3}+1)) \ln (\lambda_{4})-(\lambda_{4}-1) ((\lambda_{2}-1) (-1+\mu_{1}) \ln (2)+\lambda_{3} \lambda_{4} \ln (\lambda_{2})\cdot\\
&\;\;\;\;\;(\lambda_{1}-1) (\lambda_{1} \lambda_{2}^{2}+\lambda_{2}-1))))\bigg)
\biggr).
\end{aligned}
\end{equation*}


For $\mu_0=(\frac{8}{27},\frac{3}{2},\frac{3}{2},\frac{3}{2},\frac{2}{5},\frac{1625}{162})$, we have $r(\mu_0)=A_{1,4}(\mu_0)=1$ and 
$${\rm rank}\left(\dfrac{\partial(r-1,A_{1,4}-1)}{\partial\mu}\right)_{\mu=\mu_0}=2,$$
which implies that $r-1$ and $A_{1,4}-1$ are independent at $\mu_0$. To check if ${\rm Cycl}(\Gamma,\mu_0)\geqslant 2$, by Theorem \ref{Teo:Return0Cycl012}, we need to verify if the return map $\mathscr{R}(s;\mu_0)$ is not identically the identity map. In order to do so, we compute the expression $B(\mu_0)=S_{1}^{2}-S_{2}^{1}$. Notice that for $\mu=\mu_0$, we are under the hypothesis of Theorem \ref{Teo:Return1-+Cycl23} and that $B(\mu_0)$ is a factor of $\mathcal{A}(\mu_0)$ such that $B(\mu_0)\neq 0$ implies $\mathcal{A}(\mu_0)\neq 0$. The full expression of $B(\mu_0)$ is too cumbersome, so we omit it from the text. Its numerical value up to 12 decimal places is $B(\mu_0)\approx 6.20031365865$. By Theorem \ref{Teo:Return1-+Cycl23}, we have that ${\rm Cycl}(\Gamma,\mu_0)=2$.
\end{proof}

\section*{Acknowledgments}
\noindent The first author is supported by Funda\c c\~ao de Amparo \`a Pesquisa do Estado de S\~ao Paulo (grant number 	
2024/06926-7). The second is supported by Funda\c c\~ao de Amparo \`a Pesquisa do Estado de S\~ao Paulo (grant number 	
2021/01799-9)

\section*{Conflict of interest}

\noindent The authors declare that they have no conflict of interest.

\appendix

\section{Generalized Binomial Theorem}\label{App:GBT}

Given $\alpha\in\mathbb{C}$ and $k\in\mathbb{Z}_{\geqslant0}$, the \emph{generalized binomial coefficient} is given by
    \begin{equation}\label{0}         
        \binom{\alpha}{k}=\frac{\alpha(\alpha-1)\dots(\alpha-k+1)}{k!},
    \end{equation}
with the convention $\binom{\alpha}{0}=1$. Observe that if $\alpha\in\mathbb{Z}_{\geqslant k}$, then \eqref{0} reduces to the usual binomial coefficient.

\begin{teo}[Generalized Binomial Theorem]\label{GBT}
    Let $x$, $y$, $\alpha\in\mathbb{C}$ such that $|x|>|y|$. Then 
    \[(x+y)^\alpha=\sum_{k=0}^{\infty}\binom{\alpha}{k}x^{\alpha-k}y^k.\]
\end{teo}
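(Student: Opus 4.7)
The plan is to reduce to the case of expanding $(1+z)^\alpha$ for $|z|<1$ and then identify this function with a convergent power series by means of a simple ordinary differential equation. First I would observe that for $x\neq 0$ one may write $(x+y)^\alpha = x^\alpha\bigl(1+z\bigr)^\alpha$ with $z=y/x$ and $|z|<1$, where the powers are defined via the principal branch of the complex logarithm (the right-hand side also factors as $\sum_k\binom{\alpha}{k}x^{\alpha-k}y^k = x^\alpha\sum_k\binom{\alpha}{k}z^k$). Hence it suffices to show that
\[
(1+z)^\alpha = \sum_{k=0}^{\infty}\binom{\alpha}{k}z^k \quad\text{for all } z\in\mathbb{C} \text{ with } |z|<1.
\]

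Next I would verify convergence and define a candidate series. Using~\eqref{0}, the ratio of consecutive terms gives
\[
\left|\frac{\binom{\alpha}{k+1}z^{k+1}}{\binom{\alpha}{k}z^k}\right| = \left|\frac{\alpha-k}{k+1}\right||z|\xrightarrow[k\to\infty]{} |z|,
\]
so by the ratio test the series $g(z):=\sum_{k=0}^\infty\binom{\alpha}{k}z^k$ converges absolutely for $|z|<1$ and defines a holomorphic function on the open unit disk, with $g(0)=1$. Termwise differentiation (valid on the disk of convergence) together with the Pascal-like identity $(k+1)\binom{\alpha}{k+1}+k\binom{\alpha}{k}=\alpha\binom{\alpha}{k}$, which is an immediate consequence of~\eqref{0}, yields after rearranging indices
\[
(1+z)g'(z) = \alpha\, g(z),\qquad g(0)=1.
\]

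Finally I would show $f(z):=(1+z)^\alpha$ solves the same initial value problem. Differentiating $f(z)=\exp(\alpha\log(1+z))$ using the principal branch gives $f'(z)=\frac{\alpha}{1+z}f(z)$, so $(1+z)f'(z)=\alpha f(z)$ and $f(0)=1$. Since the ODE is linear of first order and $1+z\neq 0$ on the open unit disk, solutions are unique; equivalently, considering $h(z):=g(z)/f(z)$ (which is well defined and holomorphic on the disk since $f$ has no zeros there) one computes $h'(z)\equiv 0$, hence $h\equiv h(0)=1$. This gives $g=f$ on $|z|<1$, which together with the initial factorization proves the theorem. The only mildly delicate point is fixing a consistent branch of the complex power so that $x^\alpha$, $(x+y)^\alpha$ and $x^{\alpha-k}$ all agree; this is handled uniformly by using the principal logarithm on the half-plane $\mathrm{Re}(1+z)>0$ (which contains the unit disk) and invoking analytic continuation otherwise, so no further subtlety arises.
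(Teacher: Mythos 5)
Your proof is correct and proves the same reduction to $(1+z)^\alpha$ on the unit disk, but the route you take from there is genuinely different from the paper's. The paper simply computes the derivatives $f^{(k)}(t)=\alpha(\alpha-1)\cdots(\alpha-k+1)(1+t)^{\alpha-k}$ and writes down the Maclaurin series of the holomorphic function $f(t)=(1+t)^\alpha$, citing convergence for $|t|<1$ without argument. You instead build the candidate series $g(z)=\sum_k\binom{\alpha}{k}z^k$ from scratch, establish its radius of convergence by the ratio test, derive the first-order linear ODE $(1+z)g'=\alpha g$, $g(0)=1$ via the Pascal-type recurrence $(k+1)\binom{\alpha}{k+1}+k\binom{\alpha}{k}=\alpha\binom{\alpha}{k}$ (which is indeed an immediate consequence of $(k+1)\binom{\alpha}{k+1}=(\alpha-k)\binom{\alpha}{k}$), and then identify $g$ with $(1+z)^\alpha$ by uniqueness for this ODE (or equivalently via the quotient $h=g/f$). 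Your version is more self-contained: it does not presuppose that the Maclaurin series of $(1+z)^\alpha$ converges to the function on the whole disk, but proves it; in exchange it is somewhat longer. Both proofs share the same mild branch-of-logarithm subtlety in the factorization $(x+y)^\alpha=x^\alpha(1+z)^\alpha$, which you flag explicitly and the paper leaves implicit; neither treats it in full detail, but that is not a gap of substance in this context.
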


\begin{proof}
    Since $x\neq0$ it follows that $t=y/x\in\mathbb{C}$ is well defined and thus we can consider the holomorphic function $f\colon\mathbb{C}\to\mathbb{C}$ given by,
        \[f(t)=(1+t)^\alpha.\]
    From $f^{(k)}(t)=\alpha(\alpha-1)\dots(\alpha-k+1)(1+t)^{\alpha-k}$ it follows that expanding $f$ in Maclaurin's series we get
        \[(1+t)^\alpha=\sum_{k=0}^{\infty}\frac{f^{(k)}(0)}{k!}t^k=\sum_{k=0}^{\infty}\frac{\alpha(\alpha-1)\dots(\alpha-k+1)}{k!}t^k=\sum_{k=0}^{\infty}\binom{\alpha}{k}t^k,\]
    with the series converging provided $|t|<1$. The theorem now follows by replacing $t=y/x$ and multiplying the equation by $x^\alpha$.
\end{proof}

\section{Coefficient expressions for the Dulac map}\label{App:Dulac}
In this section, we present the explicit expressions for the coefficients $\Delta_{00}, \Delta_{10}$ and $\Delta_{01}$ obtained by Marín and Villadelprat in \cite[Theorem A]{MarVilDulacCoef}. Considering the vector field \eqref{eq:X1}, we define the following functions:

\begin{align}
&L_1(u):=\exp\int_{0}^{u}\biggl(\frac{P(0,y;\mu)}{Q(0,y;\mu)}+\frac{1}{\lambda}\biggr)\frac{dy}{y},&\quad &L_2(u):=\exp\int_{0}^{u}\biggl(\frac{Q(x,0;\mu)}{P(x,0;\mu)}+\lambda\biggr)\frac{dx}{x},\nonumber\\
&M_1(u):=L_1(u)\partial_x\left(\frac{P}{Q}\right)(0,u),&\quad & M_2(u):=L_2(u)\partial_y\left(\frac{Q}{P}\right)(u,0),    
\end{align}
Let $\sigma_{ijk}$ denote the $k$th derivative at $s=0$ of the $j$th component of the transverse section  $\sigma_i=(\sigma_{i,1},\sigma_{i,2})$, more precisely,
$$\sigma_{ijk}=\partial_s^k\sigma_{i,j}(0;\mu).$$
Now, we define the following quantities:
\begin{align}
&S_1:=\frac{\sigma_{112}}{2\sigma_{111}}-\frac{\sigma_{121}}{\sigma_{120}}\left(\frac{P}{Q}\right)(0,\sigma_{120})-\frac{\sigma_{111}}{L_1(\sigma_{120})}\hat{M}_1(1/\lambda,\sigma_{120}),\nonumber\\
&S_2:=\frac{\sigma_{222}}{2\sigma_{221}}-\frac{\sigma_{211}}{\sigma_{210}}\left(\frac{Q}{P}\right)(\sigma_{210},0)-\frac{\sigma_{221}}{L_2(\sigma_{210})}\hat{M}_2(\lambda,\sigma_{210}),
\end{align}
where $\hat{M}_i$ denotes a sort of incomplete Melin transform. We refer the reader to~\cite[Appendix~B]{MarVilDulacCoef} for a detailed study. For our purposes, the following result suffices to perform accurate computations.

\begin{propo}[{\cite[[Theorem B.1]{MarVilDulacCoef}}]
Consider an open interval $I\subset\mathbb{R}$ containing $x=0$ and an open subset $U\subset\mathbb{R}^N$.
\begin{itemize}
    \item[(a)] Given $f(x;\nu)\in\mathscr{C}^\infty(I\times U)$, there exists a unique $\hat{f}(\alpha,x;\nu)\in\mathscr{C}^\infty((\mathbb{R}\setminus\mathbb{Z}_{\geqslant 0})\times I\times U)$ such that $$x\partial_x\hat{f}(\alpha,x;\nu)-\alpha\hat{f}(\alpha,x;\nu)=f(x;\nu);$$
\item[(b)] If $x\in I\setminus\{0\}$, then $\partial_x(\hat{f}(\alpha,x;\nu)|x|^{-\alpha})=f(x;\nu)\frac{|x|^{-\alpha}}{x}$ and, taking any $k\in\mathbb{Z}_{\geqslant 0}$, with $k>\alpha$,
 $$\hat{f}(\alpha,x;\nu)=\sum_{i=0}^{k-1}\frac{\partial_x^if(0;\nu)}{i! (i-\alpha)}x^i+|x|^\alpha\int_{0}^{x}\biggl(f(s;\nu)-T^{k-1}_0f(s;\nu)\biggr)|s|^{-\alpha}\frac{ds}{s},$$
 where $T^k_0f(x;\nu)=\sum_{i=0}^{k}\frac{1}{i!}\partial_x^if(0;\nu)x^i$ is the $k$th degree Taylor polynomial of $f(x;\nu)$ at $x=0$;
 \item[(c)] For each $(i_0,x_0;\nu_0)\in\mathbb{Z}\times I\times U$ the function $(\alpha,x;\nu)\mapsto (i_0-\alpha)\hat{f}(\alpha,x;\nu)$ extends $\mathscr{C}^\infty$ at $(i_0,x_0;\nu_0)$ and, moreover, it tends to $\frac{1}{i_0!}\partial_x^{i_0}f(0;\nu_0)x_0^{i_0}$ as $(\alpha,x;\nu)\to (i_0,x_0;\nu_0)$;
 \item[(d)] If $f(x;\nu)$ is analytic on $I\times U$, then $\hat{f}(\alpha,x;\nu)$ is analytic on $(\mathbb{R}\setminus\mathbb{Z}_{\geqslant 0})\times I\times U$. Finally, for each $(\alpha_0,x_0;\nu_0)\in\mathbb{Z}_{\geqslant 0}\times I \times U$, the function $(\alpha,x;\nu)\mapsto (\alpha_0-\alpha)\hat{f}(\alpha,x;\nu)$ extends analytically to $(\alpha_0,x_0;\nu_0)$. 
\end{itemize}
\end{propo}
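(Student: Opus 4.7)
The plan is to take the explicit formula in (b) as the Ansatz for $\hat{f}$ and derive all four statements from it. The key observation is that the ODE $x\partial_x\hat{f}-\alpha\hat{f}=f$ is a first-order linear equation whose solutions on $I\setminus\{0\}$ are completely described by the integrating factor $|x|^{-\alpha}$; the subtle part is pinning down the unique solution that extends smoothly across $x=0$, which is exactly the obstruction encoded by $\alpha\in\mathbb{Z}_{\geqslant0}$.

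I would start by deriving (b). Multiplying the ODE by $|x|^{-\alpha-1}\operatorname{sgn}(x)$ rewrites it, on $I\setminus\{0\}$, as $\partial_x(\hat{f}|x|^{-\alpha})=f(x;\nu)\tfrac{|x|^{-\alpha}}{x}$, which is the first assertion of (b). To integrate it, I would split $f=T_0^{k-1}f+R_{k-1}$, where the remainder satisfies $R_{k-1}(x;\nu)=O(x^k)$. For each monomial $x^i$ with $0\leqslant i<k$ and $\alpha\neq i$, the explicit particular solution of $xg'-\alpha g=x^i$ is $\tfrac{x^i}{i-\alpha}$; summing these gives the polynomial part of the formula. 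For the remainder, since $k>\alpha$ the function $R_{k-1}(s;\nu)|s|^{-\alpha}/s$ is integrable at $s=0$, and $|x|^\alpha\int_0^xR_{k-1}(s;\nu)|s|^{-\alpha}\frac{ds}{s}$ is $C^\infty$ at $x=0$ by a Hadamard-type lemma, after writing $R_{k-1}(s;\nu)=s^k\tilde{R}(s;\nu)$ with $\tilde{R}$ smooth and performing the change of variables $s=xt$. A direct differentiation confirms that this $\hat{f}$ solves the ODE, and a Taylor comparison shows the formula is independent of the choice of $k>\alpha$. Part (a) then follows: existence comes from the construction, and uniqueness follows because the difference $h$ of two solutions satisfies $xh'=\alpha h$, whose only solutions on $I\setminus\{0\}$ are $c_\pm|x|^\alpha$, none of which are $C^\infty$ at $0$ when $\alpha\notin\mathbb{Z}_{\geqslant0}$ unless $c_\pm=0$.

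For (c), fix $i_0\in\mathbb{Z}$ and $(x_0;\nu_0)$ and choose $k>\max\{i_0,0\}$. In the formula of (b), only the single term $\frac{\partial_x^{i_0}f(0;\nu)}{i_0!\,(i_0-\alpha)}x^{i_0}$ (when $i_0\geqslant0$) has a pole as $\alpha\to i_0$; every other summand and the integral remainder are $C^\infty$ in $(\alpha,x;\nu)$ at $(i_0,x_0;\nu_0)$. Multiplying by $(i_0-\alpha)$ cancels this pole, sends every other contribution to zero, and leaves exactly the stated limit $\tfrac{1}{i_0!}\partial_x^{i_0}f(0;\nu_0)x_0^{i_0}$. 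For $i_0<0$ the function $\hat{f}$ is already $C^\infty$ at $(i_0,x_0;\nu_0)$, so the $(i_0-\alpha)$-multiplication yields immediate smoothness with vanishing limit.

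Finally, (d) follows by revisiting the formula of (b) with $f$ analytic. The polynomial part is visibly analytic on $(\mathbb{R}\setminus\mathbb{Z}_{\geqslant0})\times I\times U$, and the integrand in the remainder term is analytic in $(\alpha,s;\nu)$ over a compact interval, so the integral is analytic in these variables away from $x=0$. The main obstacle I expect is joint analyticity at $x=0$: although the raw integral representation is only defined for $x\neq0$, I must exhibit the sum of the polynomial part and the integral remainder as an analytic function of $x$ through $x=0$. For this I would again exploit $R_{k-1}(s;\nu)=s^k\tilde{R}(s;\nu)$ with $\tilde{R}$ analytic, and substitute $s=xt$ to obtain the identity $|x|^\alpha\int_0^xR_{k-1}(s;\nu)|s|^{-\alpha}\frac{ds}{s}=x^k\int_0^1 t^{k-\alpha-1}\tilde{R}(xt;\nu)\,dt$ on $x>0$ (and the analogous form on $x<0$); the right-hand side is visibly analytic in $(\alpha,x;\nu)$ for $k>\alpha$ by differentiation under the integral sign. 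The meromorphic extension in $\alpha$ across $\alpha_0\in\mathbb{Z}_{\geqslant0}$ after multiplication by $(\alpha_0-\alpha)$ then follows exactly as in (c), since the only $\alpha$-singularities of $\hat{f}$ are the simple poles coming from the explicit $1/(i-\alpha)$ factors in the polynomial part.
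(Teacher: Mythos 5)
This proposition is quoted in the paper from Mar\'in--Villadelprat as \cite[Theorem B.1]{MarVilDulacCoef} and is used as a black-box tool; the paper itself gives no proof, so there is nothing internal to compare your argument against. Your construction is the standard one underlying that result and is correct in all four parts: the integrating factor $|x|^{-\alpha}$, the Taylor splitting $f=T_0^{k-1}f+R_{k-1}$, the explicit particular solutions $x^i/(i-\alpha)$ for the polynomial part, and the Hadamard factorisation $R_{k-1}(s;\nu)=s^k\tilde R(s;\nu)$ followed by $s=xt$, which exhibits the remainder as $x^k\int_0^1 t^{k-\alpha-1}\tilde R(xt;\nu)\,dt$ and thereby shows both the smooth/analytic extension through $x=0$ and the independence of the choice of $k>\alpha$. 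When you write this up, two details are worth making explicit: joint $C^\infty$ (or analytic) dependence of the remainder integral on $\alpha$ follows because $\alpha$-differentiation under the integral sign only introduces factors $\bigl(\ln t\bigr)^m$, which remain integrable against $t^{k-\alpha-1}$ as long as $k>\alpha$; and in the uniqueness step of (a) you should note that the homogeneous solutions $c_\pm|x|^\alpha$ fail to be $C^\infty$ at $0$ precisely because $\alpha\notin\mathbb{Z}_{\geqslant0}$ (for $\alpha\in\mathbb{Z}_{\geqslant0}$ the monomial $x^\alpha$ is smooth and uniqueness genuinely fails), which explains why those values are excised from the domain of $\hat f$.
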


The coefficients of the Dulac map are given by the next result.

\begin{propo}[{\cite[Theorem A, item (b)]{MarVilDulacCoef}}]\label{Propo:CoefDulac}
The coefficients $\Delta_{ij}$ for $(i,j)\in\{(0,0),(1,0),(0,1)\}$ of the Dulac map are given by
\begin{equation}
\Delta_{00}=\frac{\sigma_{111}^\lambda\sigma_{120}}{L_1^\lambda(\sigma_{120})}\frac{L_2(\sigma_{210})}{\sigma_{221}\sigma_{210}^\lambda},\quad \Delta_{01}=-(\Delta_{00})^2S_2,\quad \Delta_{10}=\lambda\Delta_{00}S_1.
\end{equation}
\end{propo}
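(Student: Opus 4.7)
The plan is to derive the coefficients of the Dulac map $D(s;\mu)$ by explicitly integrating the flow of $X_\mu$ from $\sigma_1$ to $\sigma_2$ and reading off the asymptotic expansion in $s$. Since multiplication by the strictly positive factor $x^{n_1}y^{n_2}$ leaves orbits invariant, it suffices to study the reduced field $\widetilde{X}_\mu = xP\partial_x + yQ\partial_y$. The first step would be to construct a semi-local first integral $H(x,y;\mu)$ near the saddle: on the stable separatrix $\{x=0\}$ the invariance equation integrates using the kernel $P(0,y)/Q(0,y)+1/\lambda$, which is precisely the integrand in the definition of $L_1$, and the symmetric computation on the unstable separatrix produces $L_2$. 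Combining these with a standard transversality/contraction argument near the saddle yields a smooth (formally smooth) first integral with the prescribed behavior on the two axes.

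Using the invariance identity $H(\sigma_1(s;\mu);\mu) = H(\sigma_2(D(s;\mu);\mu);\mu)$, one then solves implicitly for $D$. Matching the leading $s^\lambda$ balance uses only the first-order Taylor data of $\sigma_1,\sigma_2$ (namely $\sigma_{111}, \sigma_{221}$) together with the evaluations $L_1(\sigma_{120})$, $L_2(\sigma_{210})$, and a direct algebraic manipulation produces the stated formula for $\Delta_{00}$. Pushing the matching one order further brings in the curvatures $\sigma_{112}, \sigma_{222}$ and cross-terms $\sigma_{121}, \sigma_{211}$ of the transverse sections, together with the first nontrivial derivatives of $P/Q$ and $Q/P$ along the axes. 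After substitution into the invariance equation, the latter produce integrals of the form $\int_0^u M_i(\eta)|\eta|^{-\beta}\,d\eta/\eta$ with $\beta \in \{1/\lambda, \lambda\}$, which are precisely the incomplete Mellin transforms $\hat{M}_i$ entering the definitions of $S_1$ and $S_2$. Isolating the $s$ and $s^\lambda$ contributions to $D(s;\mu) - \Delta_{00}s^\lambda$ then reproduces the identities $\Delta_{10} = \lambda \Delta_{00}S_1$ and $\Delta_{01} = -(\Delta_{00})^2 S_2$.

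The hard part will be controlling the regularity at the resonant exponents $\lambda\in\mathbb{N}$ (for $\Delta_{01}$) and $\lambda\in 1/\mathbb{N}$ (for $\Delta_{10}$), where the naive Mellin integrals diverge. The preceding proposition is engineered precisely for this: its item (c) guarantees that $(i_0-\alpha)\hat{f}(\alpha,x;\nu)$ extends smoothly across $\alpha=i_0$, so the pole of $S_1$ or $S_2$ is counterbalanced by a vanishing prefactor arising in the matching computation, and any leftover contribution is absorbed into the $\mathcal{F}^\infty_\ell(\mu_0)$ remainder. The rest of the argument is careful bookkeeping of Taylor expansions against the regularity properties of $\hat{f}$ recorded in that proposition, plus the observation that the exponents $n_1,n_2$ factor out of the orbit equations and therefore play no role in the final coefficients.
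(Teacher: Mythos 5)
This proposition is imported verbatim from Mar\'in--Villadelprat (\cite[Theorem A, item (b)]{MarVilDulacCoef}); the paper offers no proof of it, so there is no internal argument to match yours against, and you are in effect reproving a cited result from scratch. Your outline does resemble the general strategy of the source, but as a proof it has a genuine gap at its core. The step ``combining these with a standard transversality/contraction argument near the saddle yields a smooth (formally smooth) first integral with the prescribed behavior on the two axes'' is precisely the hard analytical content of the Mar\'in--Villadelprat programme, and it is not standard: a hyperbolic saddle with hyperbolicity ratio $\lambda$ does not in general admit a smooth first integral near the origin (the natural candidate behaves like $x^{\lambda}y\,(1+\cdots)$ and is only finitely smooth there), the normalizing data must be controlled \emph{uniformly in the parameter} $\mu$ including across resonant values of $\lambda$, and the error terms must be shown to land in the classes $\mathcal{F}^{\infty}_{\ell}(\mu_0)$. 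None of this follows from a contraction argument on the separatrices; it is exactly what occupies the bulk of \cite{MarVilDulacLocal,MarVilDulacGeneral,MarVilDulacCoef}. Your appeal to item (c) of the preceding proposition only handles the removable singularity of the incomplete Mellin transform $\hat{f}$ at integer exponents; it does not supply the existence or regularity of the transition itself.

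A second, smaller but still real, deficiency is that the order-two matching is never carried out. The claimed identities $\Delta_{10}=\lambda\Delta_{00}S_1$ and $\Delta_{01}=-(\Delta_{00})^2S_2$ encode very specific numerology --- the factor $\lambda$ versus the square of $\Delta_{00}$, the signs, and the exact combinations of $\sigma_{112}/(2\sigma_{111})$, $\sigma_{121}/\sigma_{120}$ and $\hat M_1(1/\lambda,\sigma_{120})$ appearing in $S_1$ (and symmetrically in $S_2$). Asserting that ``a direct algebraic manipulation produces the stated formula'' does not verify any of this; without the computation one cannot distinguish the correct statement from, say, $\Delta_{01}=-\Delta_{00}S_2$ or a sign-flipped variant. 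To make the argument complete you would either need to carry out the expansion of the invariance identity $H(\sigma_1(s))=H(\sigma_2(D(s)))$ to second order explicitly, with the regularity of $H$ established beforehand, or simply cite \cite{MarVilDulacCoef} as the present paper does.
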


\section{An ODE model in game theory}\label{App:ODEGame}
We now briefly present the construction of model \eqref{eq:gamemodel}. Let $\Gamma_1,\Gamma_2$ be two players and $\{X_1,X_2\}$, $\{Y_1,Y_2\}$ be the respective \emph{pure strategies}. We denote by $a_{ij}^\ast\in\mathbb{R}$ the payoff of strategy $X_i$ against $Y_j$ and by $b_{ij}^\ast\in\mathbb{R}$ the payoff of strategy $Y_i$ against $X_j$. For each probabilistic vector of dimension two
$$x=(x_1,x_2)\in S^2:=\{(x_1,x_2)\in\mathbb{R}^2:x_1\geqslant 0,x_2\geqslant 0,x_1+x_2=1\},$$
we associate a \emph{mix strategy} given by $x_1 X_1+x_2 X_2$. Similarly, given $y\in S^2$, we associate the respective mix strategy $y_1Y_1+y_2Y_2$. Let
$$A^\ast=\left(\begin{array}{cc}
   a_{11}^\ast  & a_{12}^\ast \\
   a_{21}^\ast  & a_{22}^\ast 
\end{array}\right),\quad B^\ast =\left(\begin{array}{cc}
   b_{11}^\ast  & b_{12}^\ast \\
   b_{21}^\ast  & b_{22}^\ast 
\end{array}\right),$$
be the \emph{payoff matrices}. Given $x,y\in S^2$, the \emph{average payoff} of the mix strategy associated to $x$ against the mix strategy associated to $y$ is given by
$$\langle x, A^\ast y\rangle=a_{11}^\ast x_1y_1+a_{12}^\ast x_1y_2+a_{21}^\ast x_2y_1+a_{22}^\ast x_2y_2,$$
and the average payoff of the mix strategy associated to $y$ against the mix strategy associated to $x$ is given by
$$\langle y, B^\ast x\rangle=b_{11}^\ast x_1y_1+b_{12}^\ast x_1y_2+b_{21}^\ast x_2y_1+b_{22}^\ast x_2y_2.$$
The dynamics between players $\Gamma_1$ and $\Gamma_2$ is defined by the system of differential equations,
\begin{equation}\label{eq:game}
\begin{array}{l}
\dot x_1=x_1\left(\langle e_1,A^\ast y\rangle-\langle x,A^\ast y\rangle\right),\qquad \dot y_1=y_1\left(\langle e_1,B^\ast x\rangle-\langle y,B^\ast x\rangle\right),\\
\dot x_2=x_2\left(\langle e_2,A^\ast y\rangle-\langle x,A^\ast y\rangle\right),\qquad \dot y_2=y_2\left(\langle e_2,B^\ast x\rangle-\langle y,B^\ast x\rangle\right).
\end{array}
\end{equation}
Essentially, the weight $x_i$ of the pure strategy $X_i$ depends on the difference between the payoffs of the pure strategy and the mix strategy. In other words, the bigger this difference, the more superior strategy $X_i$ is. Since $x_1+x_2=y_1+y_2=1$, one can consider only the variables $x_1,y_1$ to study the dynamics of the game. Thus, \eqref{eq:game} simplifies to
\begin{equation*}
\begin{array}{l}
\dot x= x(x-1)\left(a_{22}^\ast-a_{12}^\ast+(a_{12}^\ast+a_{21}^\ast-a_{11}^\ast-a_{22}^\ast)y\right),\\
\dot y= y(y-1)\left(b_{22}^\ast-b_{12}^\ast+(b_{12}^\ast+b_{21}^\ast-b_{11}^\ast-b_{22}^\ast)x\right).
\end{array}
\end{equation*}
The search for more realistic models demanded that the payoffs depended on the weights given to strategies $X_i$ and $Y_j$ rather than being constants, i.e. $a_{ij}^\ast=a_{ij}^\ast(x,y)$ and $b_{ij}^\ast=b_{ij}^\ast(x,y)$. Hence, assuming $a_{ij}^\ast,b_{ij}^\ast$ polynomial, the model is generally written as system \eqref{eq:gamemodel}.

By the above construction, to investigate the dynamics between the players $\Gamma_1$ and $\Gamma_2$, it is sufficient to study system \eqref{eq:gamemodel} in the unit square, i.e. $(x,y)\in [0,1]^2$.

\bibliographystyle{siam}
\bibliography{Manuscript.bib}

\end{document}